\title{Local Base Change via Tate Cohomology}
\author{Niccol\`o Ronchetti}
\date{\today}
\theoremstyle{theorem}
\newtheorem{thm}{Theorem}
\newtheorem{lem}[thm]{Lemma}
\newtheorem{prop}[thm]{Proposition}
\newtheorem{conj}[thm]{Conjecture}
\theoremstyle{definition}
\newtheorem{defn}{Definition}
\newtheorem{exmp}{Example}
\theoremstyle{remark}
\newtheorem*{rem}{Remark}
\renewcommand{\mod}{\ensuremath{\bmod \,}}
\newcommand{\Hom}{\ensuremath{\mathrm{Hom}}}
\newcommand{\Aut}{\ensuremath{\mathrm{Aut}}}
\newcommand{\Gal}{\ensuremath{\mathrm{Gal}}}
\newcommand{\Rep}{\ensuremath{\mathrm{Rep}}}
\newcommand{\Irrep}{\ensuremath{\mathrm{Irrep}}}
\newcommand{\Ind}{\ensuremath{\mathrm{Ind}}}
\newcommand{\cInd}{\ensuremath{\operatorname{c-Ind}}}
\newcommand{\Tr}{\ensuremath{\mathrm{Tr}}}
\newcommand{\supp}{\ensuremath{\mathrm{Supp}}}
\newcommand{\Stab}{\ensuremath{\mathrm{Stab}}}
\newcommand{\Frob}{\ensuremath{\mathrm{Frob}}}
\newcommand{\rk}{\ensuremath{\mathrm{rk}}}
\newcommand{\ch}{\ensuremath{\mathrm{char}}}
\newcommand{\id}{\ensuremath{\mathrm{id}}}
\newcommand{\diagon}{\ensuremath{\mathrm{diag}}} %diagonal matrix
\newcommand{\Cl}{\ensuremath{\mathrm{Cl}}} %conjugacy classes
\newcommand{\im}{\ensuremath{\mathrm{Im}}}
\newcommand{\res}{\ensuremath{\mathrm{res}} \,}
\newcommand{\lra}{\ensuremath{\longrightarrow}}
\newcommand{\Q}{\ensuremath{\mathbb Q}}
\newcommand{\Qp}{\ensuremath{\mathbb Q_p}}
\newcommand{\Fl}{\ensuremath{\mathbb F_l}}
\newcommand{\FFl}{\ensuremath{\overline { \mathbb F_l}}}
\newcommand{\F}[1]{\ensuremath{\mathbb F_{#1}}}
\newcommand{\Z}{\ensuremath{\mathbb Z}}
\newcommand{\Zl}{\ensuremath{\mathbb Z_l}}
\newcommand{\Ql}{\ensuremath{\mathbb Q_l}}
\newcommand{\C}{\ensuremath{\mathbb C}}
\newcommand{\Gl}[1]{\ensuremath{\mathrm{GL}_{#1} }}
\newcommand{\GL}[2]{\ensuremath{\mathrm{GL}_{#1} (#2) }}
\newcommand{\Norm}[2]{\ensuremath{\mathrm{Norm}_{#1 | #2} }}
\newcommand{\Res}[3]{\ensuremath{\mathrm{Res}_{#1 / #2}  #3 }} % Weil restriction
\newcommand{\OO}{\ensuremath{\mathcal O}}
\newcommand{\Rho}{\mathrm{P}}
\newcommand\blfootnote[1]{%
  \begingroup
  \renewcommand\thefootnote{}\footnote{#1}%
  \addtocounter{footnote}{-1}%
  \endgroup
}
\begin{document}

\maketitle

\section{Introduction}
In this paper we propose a new way to realize cyclic base change for prime degree extensions of characteristic zero local fields; we work with $l$-adic representations (that is, representations on vector spaces defined over $\overline \Ql$ - in fact over some finite extension of $\Ql$) and we restrict our attention to cuspidal representations. Every representation of a $p$-adic group mentioned in this paper is tacitly assumed to be smooth (that is, every vector has an open stabilizer). \blfootnote{2010 \textit{Mathematics Subject Classification.} 11F70, 11S37, 22E50.}
\subsection{Motivation}
The motivation to investigate cyclic base change in this situation comes from two conjectures of Treumann and Venkatesh (section 6.3 in \cite{TV}) where they investigate a particular case of Langlands functoriality, which we now recall.

For every connected reductive group $\mathbf G$ over a $p$-adic field $F$, Langlands conjectured (see for instance \cite{vogan} for a detailed exposition) the existence of finite-to-one maps between (isomorphism classes of) complex irreducible admissible representations of $G= \mathbf G(F)$ and (equivalence classes of) admissible Langlands parameters, which are group homomorphisms $\phi: WD_F \lra {^{L}} \mathbf G = \widehat {\mathbf G} \rtimes \Gal(F)$ satisfying some technical conditions\footnote{Here $WD_F$ denotes the Weil-Deligne group of $F$, a slight technical modification of the Weil group of $F$. The dual group $\widehat{\mathbf G}$ is a complex reductive Lie group.}. This correspondence should be well-behaved with respect to many different constructions, and preserve analytic objects such as $L$-functions and $\varepsilon$-factors that can be independently associated to a $G$-representation as well as to a Langlands parameter. \\
Given now two connected reductive groups $\mathbf G, \mathbf G'$ over $F$ and a well-behaved morphism between their $L$-groups $^{L} \phi: {^{L}} \mathbf G \lra {^{L}} \mathbf G'$ which sends Langlands parameters of $G$ into Langlands parameters of $G'$, the functoriality conjecture (see for example \cite{arthur} and \cite{gelbart}) predicts that the associated correspondence between $G$- and $G'$-representations satisfies many nice properties (for example, it is compatible with $L$-functions). It is then an interesting question to give an explicit description of such correspondence.

Treumann and Venkatesh investigate Langlands functoriality in the following setup: let $\mathbf G$ be a reductive algebraic group defined over a $p$-adic field $F$ and an automorphism $\sigma \in \Aut(\mathbf G)$ of order $l$ whose fixed point set is the algebraic group $\mathbf H = \mathbf G^{\sigma}$. \\
They introduce the notion of linkage\footnote{This is supposed to be a representation-theoretic version of the Brauer homomorphism.}: a $\mod l$ representation $\pi$ of $G^{\sigma}$ is \emph{linked} to a $\sigma$-fixed $\mod l$ representation $\Pi$ of $G$ if the Frobenius twist $\pi^{(l)}$ appears as a Jordan-Holder constituent of the Tate cohomology $T^*(\Pi)$, which is naturally a $G^{\sigma}$-representation (see section 3 and definition 6.2 in \cite{TV}).
Under some technical assumptions, they conjecture that there exists an admissible $L$-homomorphism $^{L} \phi: {^{L}}H \lra {^{L}} G$ such that if $\pi$ is linked to the $\sigma$-invariant $\Pi$, then $^{L} \phi$ sends the Langlands parameter of $\pi$ into the one of $\Pi$. \\
Looking at the correspondence of $H$- and $G$-representations induced by $^{L} \phi$ in the opposite direction, the conjecture says that among the $H$-representations corresponding to $\Pi$ there should be all $\pi$'s whose Frobenius twists $\pi^{(l)}$ are Jordan-Holder factors of the Tate cohomology $T^*(\Pi)$: this is what is meant by the catchy phrase `Tate cohomology realizes Langlands functoriality'. \\

Our main theorem proves the conjecture of Treumann and Venkatesh in the special case where $\mathbf G = \Gl{n}$, $\sigma$ is a Galois automorphism of prime order $l$ and the two representations are cuspidal, of level zero and minimal-maximal type. \\
Notice that the assumption on $\sigma$ means that we are investigating an important special case of Langlands functoriality known as \emph{base change} (see section 2 for a detailed definition), and the explicit description of the base change given via Tate cohomology is quite remarkable.

\subsection{Statement of results}
Let $F \supset E$ be a degree $l$ Galois extension of characteristic zero, non-archimedean local fields of residue characteristic $p$, where $p$ and $l$ are different primes. Let $n$ be a positive integer coprime to both $l$ and $p$.
We say that a representation $\left( \rho, V \right)$ of $\GL{n}{F}$ is $\Gal(F/E)$-equivariant when for each element $\gamma \in \Gal(F/E)$ the representations $\rho$ and $\rho \circ \gamma$ are abstractly isomorphic (here $\gamma$ acts componentwise on $\GL{n}{F}$).
A representation $\pi$ of $\GL{n}{E}$ is said to be of level zero and minimal-maximal type if it is the induction from the (open, compact mod center) subgroup $E^* \GL{n}{\OO_E}$ of a representation inflated from a cuspidal representation of $\GL{n}{k_E}$. \\

The main result is the following. All non-standard notions mentioned in the theorem are explained in section 2. \\
Let $\pi$ be an $l$-adic cuspidal, level zero, minimal-maximal type irreducible representation of $\GL{n}{E}$ and $\left( \rho, V \right)$ be an $l$-adic cuspidal, level zero, minimal-maximal type irreducible representation of $\GL{n}{F}$ which is $\Gal(F/E)$-equivariant.  From $(\rho, V)$ we construct a $\mod l$ representation of $\GL{n}{E}$ as follows: let $L \subset V$ be a $\rho$-stable lattice, and $\Rho: \GL{n}{F} \rtimes \Gal(F / E) \lra V$ be an extension of $\rho$.
Then $L$ is $\Rho ( \gamma)$-stable and the \emph{Tate cohomology} \[ T^0(\rho):= L^{\Rho( \gamma)} / \im \left( 1 + \Rho ( \gamma) + \Rho(\gamma)^2 + \ldots + \Rho(\gamma)^{l-1} \right) \] yields an irreducible $\mod l$ representation of $\GL{n}{E}$ whose isomorphism class is independent of the choices of $L$ and $\Rho$: see lemma \ref{T0welldefined} for the proof of this claim.
\begin{thm}\label{mainthm} With the notation above, the $\mod l$ reductions $r_l \left( \pi \right)$ and $r_l \left( \rho \right)$ (which are irreducible $\overline {\F{l}}$-representations) are in base change (denoted $\mathrm{bc}$) if and only if \[ r_l(\pi)^{(l)} \cong T^0(\rho), \] where on the left we have the Frobenius twist of $r_l(\pi)$ (i.e. the scalar $\lambda$ acts as $\lambda^{1/l}$ - see also formula \ref{deftwist} in the Notation section).

In particular, \[ r_l(\pi)^{(l)} \cong T^0( \mathrm{bc}(\pi)). \]
\end{thm}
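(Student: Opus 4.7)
The plan is to reduce the statement to a computation on cuspidal representations of $\GL{n}$ over finite fields, exploiting the hypothesis that $\pi$ and $\rho$ are of level zero and minimal-maximal type, and then to match base change with the Tate cohomology construction via the Deligne--Lusztig or Green's parameterization of cuspidals by regular characters of elliptic tori.

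First I would write
\[ \pi \cong \cInd_{J_E}^{\GL{n}{E}} \widetilde{\tau}, \qquad \rho \cong \cInd_{J_F}^{\GL{n}{F}} \widetilde{\sigma}, \]
with $J_E = E^* \GL{n}{\OO_E}$, $J_F = F^* \GL{n}{\OO_F}$, where $\tau$ and $\sigma$ are cuspidal representations of $\GL{n}{k_E}$ and $\GL{n}{k_F}$ respectively, inflated to $\GL{n}{\OO}$ and extended via the central character. The $\Gal(F/E)$-equivariance of $\rho$ restricts to a $\Gal(k_F/k_E)$-equivariance of $\sigma$. By lemma \ref{T0welldefined} I may choose convenient models: take the lattice $L = \cInd_{J_F}^{\GL{n}{F}} L_\sigma$ for a $J_F$-stable lattice $L_\sigma \subset \widetilde \sigma$, and an extension $\Rho$ whose restriction to $J_F$ agrees with a prescribed extension of $\widetilde\sigma$ to $J_F \rtimes \Gal(F/E)$.

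The key technical step is to establish a Mackey-type compatibility
\[ T^0(\rho) \; \cong \; \cInd_{J_E}^{\GL{n}{E}} T^0(\widetilde \sigma), \]
where the Tate cohomology on the right is the analogous construction at the compact level. To prove this I would decompose $L$ as a $\Zl[\langle \gamma \rangle]$-module according to the $\gamma$-orbits on the coset space $J_F \backslash \GL{n}{F}$: orbits of length $l$ contribute induced (hence Tate-acyclic) modules, while the fixed cosets identify with $J_E \backslash \GL{n}{E}$ -- using that $\gamma$ fixes a coset $J_F g$ precisely when $g \in \GL{n}{E} \cdot J_F$, a consequence of Galois descent for $\GL{n}{\OO}$ at tame levels -- and contribute the compact induction on the right. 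This is the step I expect to be the main obstacle: one must verify that the orbit decomposition is compatible with the chosen lattice and intertwiner, control the lattice structure on the compact induction, and check that the formula is independent of the model of $\Rho(\gamma)$, matching lemma \ref{T0welldefined}.

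Once the compatibility is in hand, I would conclude by proving the finite-field analog: for cuspidals $\sigma$ of $\GL{n}{k_F}$ and $\tau$ of $\GL{n}{k_E}$ which are in base change, one has $r_l(\tau)^{(l)} \cong T^0(\widetilde \sigma)$. This statement reduces via Deligne--Lusztig theory to a norm-map identity on characters of the degree $n$ anisotropic tori $k_{F,n}^*$ and $k_{E,n}^*$: base change of cuspidals corresponds to precomposition with $\Norm{k_{F,n}}{k_{E,n}}$, and the Frobenius twist on the mod $l$ side exactly accounts for the factor of $l$ introduced by this norm map on characters away from residue characteristic. Combining this with the Mackey-type formula yields the theorem. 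The ``in particular'' clause follows by applying the equivalence to $\rho = \mathrm{bc}(\pi)$, which retains the level zero minimal-maximal type and is by construction in base change with $\pi$.
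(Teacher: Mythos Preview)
Your proposal essentially coincides with the paper's proof in the \emph{unramified} case: the Mackey-type compatibility you state is Proposition~\ref{compat}(3), proved there via $l$-sheaves and the cohomological identification $(H\backslash G)^\sigma \cong H^\sigma\backslash G^\sigma$; the finite-field matching is Theorem~\ref{charscorresp}, argued by an explicit character computation with Green polynomials; and one also needs Theorem~\ref{Shintaniunram} to verify that the finite-field Shintani correspondence really lifts to local base change, a step you gloss over but which requires the explicit local Langlands of Bushnell--Henniart.

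However, there is a genuine gap in the \emph{ramified} case. When $F/E$ is totally (tamely) ramified the residue fields coincide, $k_F=k_E$, so $\Gal(k_F/k_E)$ is trivial and your ``finite-field base change'' statement is vacuous: $\sigma$ and $\tau$ are representations of the same group $\GL{n}{k_E}$, and the relationship between them (namely $\sigma$ has Green character $\chi^l$ if $\tau$ has character $\chi$) must be extracted from the local Langlands computation, not from any residual Galois action. More seriously, your claim that $\gamma$ fixes a coset $J_F g$ precisely when $g\in\GL{n}{E}\cdot J_F$ is false here: the $\sigma$-fixed vertices of $\mathcal B(\GL{n}{F})$ strictly contain the vertices of $\mathcal B(\GL{n}{E})$ --- they consist of all vertices in closures of $E$-chambers, e.g.\ the points $\diag(\varpi_F^{i_1},\ldots,\varpi_F^{i_{n-1}},1)\cdot\Lambda_1$ for $l\ge i_1\ge\cdots\ge i_{n-1}\ge 0$. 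The paper handles this by showing, via an explicit Kirillov-model computation, that the stalk of $T^0(\mathcal F)$ at each such extra vertex vanishes (the conjugation by $\diag(\xi_l^{i_j})$ has no invariants on the Whittaker model unless all $i_j\in\{0,l\}$). Your orbit decomposition would therefore not directly yield the compact induction from $J_E$, and this vanishing step is a nontrivial missing ingredient.
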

Notice that the underlying space of $T^0(\rho)$ is by construction $l$-torsion - where $l$ is the order of the Galois element $\gamma$; thus if we want to compare $T^0(\rho)$ to a reduction modulo a prime $l'$ of an irreducible representation $\pi$ whose coefficient field is, say, a number field, we are forced to pick $l'=l$.
\begin{rem} We will show that if we have $r_l(\rho) \cong \mathrm{bc}(r_l(\pi))$, then the isomorphism $T^0(\rho) \cong r_l(\pi)^{(l)}$ holds. We now explain how the other direction of the theorem follows from this implication.

Vice versa, suppose then that $T^0(\rho_1) \cong r_l(\pi)^{(l)}$ for $\rho_1$ an irreducible, cuspidal, Galois-invariant, $l$-adic representation of $\GL{n}{E}$ and $\pi$ an irreducible, cuspidal, $l$-adic representation of $\GL{n}{F}$. By the classical theory of base change we know (see e.g. section 6, chapter 1 of \cite{AC}) that there exists an irreducible, cuspidal $l$-adic representation $\pi_1$ of $\GL{n}{F}$ with $\rho_1 = \mathrm{bc}(\pi_1)$.

By compatibility of base change with reduction $\mod l$ (which follows immediately from the main theorem of \cite{BH} since restriction on the Galois side preserves congruence $\mod l$), we have that $r_l(\rho_1) \cong \mathrm{bc} \left( r_l(\pi_1) \right)$.
We obtain thus \[ r_l(\pi_1)^{(l)} \cong T^0(\rho_1) \cong r_l(\pi)^{(l)} \] where the left-hand side equality is the direction of the theorem that we are assuming right now, while the right-hand side equality holds by hypothesis.
Hence we get \[ r_l(\rho_1) \cong \mathrm{bc} \left( r_l(\pi_1) \right) \cong \mathrm{bc} \left( r_l(\pi) \right) \] as we needed.
\end{rem}
Finally, we want to remark that the theorem makes no mention of the ramification of the cyclic extension, but in fact the ramified case and the unramified one are intrinsically different and we prove the statement separately in each case.
\subsection{Overview of the paper}
Before giving an overview of the proof of Theorem \ref{mainthm} we want to emphasize the importance of the Shintani correspondence for our purposes. Shintani's seminal work was fundamental for the development of cyclic base change in the local setting, as described by Arthur and Clozel in \cite{AC}, chapter 1, section 6.

In fact, to prove theorem \ref{mainthm} in the unramified case one fundamental step is to rephrase the Shintani correspondence (as in \cite{Sh}), which gives a bijection between irreducible $l$-adic representations $\tilde \pi$ of $\GL{n} {\F{q}}$ and irreducible $l$-adic representations $\tilde \rho$ of $\GL{n} {\F{q^l}}$ which are Galois-fixed, in terms of a matching of modular representations (see Theorem \ref{charscorresp}). The bijection that Shintani discovered is given in terms of characters, making use of a generalization of the norm map - we recall it in the Notation section.

We show that in the cuspidal case (when we view both characters of $\tilde \pi$ and $\tilde \rho$ as valued in the integral closure of $\Zl$), the Shintani correspondence may be realized as in our main theorem: taking Tate cohomology of any stable $\overline \Zl$-lattice in the underlying space of $\tilde \rho$ with respect to the natural $\Frob_q$-action on $\GL{n}{ \F{q^l}}$ yields a $\mod l$ representation of $\GL{n} {\F{q}}$, and this coincides (up to a Frobenius twist) with the reduction $\mod l$ of $\tilde \pi$. This is proved explicitly, by computing characters of $\tilde \pi$ and $\tilde \rho$ and showing that the Brauer characters of the two modular representations match.

We then lift $\tilde \pi$ and $\tilde \rho$ to cuspidal, level zero $l$-adic representations of $\GL{n}{E}$ and $\GL{n}{F}$, and we show that the two lifts $\pi$ and $\rho$ match in the same way.

Finally, we show that the lifts $\pi$ and $\rho$ are in fact realizing cyclic base change: this is done by applying the local Langlands correspondence (described explicitly by Bushnell and Henniart in \cite{BH1} and \cite{BH2}) and proving that on the Galois side the representations one obtains are restriction of one another. \\

The ramified case requires different techniques (one cannot hope to work on the finite group level, as now the two local fields have the same residue field): here we start with a cuspidal $l$-adic representations $\pi$ of $\GL{n}{E}$, realize its base change $\rho$ via the explicit local Langlands correspondence (as in \cite{BH1} and \cite{BH2}) and only then we show that the Frobenius twist of the reduction of $\pi$ is isomorphic to the Tate cohomology of $\rho$.

This is done by considering both representations as $l$-sheaves on the respective coset spaces, which are then embedded as vertex sets of the respective Bruhat-Tits buildings. A careful analysis of Tate cohomology lets us conclude that $T^0(\rho)$ is in fact supported on the (canonically embedded) building of $\GL{n}{E}$ and eventually we can see that the two $l$-sheaves are isomorphic. \\

As explained above, we treated the simplest possible case (level zero and minimal-maximal type), but we expect a similar statement to be true in a much more general situation, without any restriction on the level and type.
\begin{conj} As before, let $F \supset E$ be a degree $l$ extension of characteristic zero, non-archimedean local fields of residue characteristic $p$, where $p$ and $l$ are different primes. Let $\pi$ be a cuspidal $l$-adic representation of $\GL{n}{E}$ and $\rho$ be a cuspidal $l$-adic representation of $\GL{n}{F}$ that is $\Gal(F/E)$-equivariant; extend $\rho$ to a representation of $\GL{n}{F} \rtimes \Gal(F/E)$. \\
Then $r_l(\pi)$ and $r_l(\rho)$ are in base change exactly when \[ r_l(\pi)^{(l)} \cong T^0(\rho). \]
\end{conj}
This conjecture is consistent with the general philosophy about linkage being compatible with the Langlands functorial transfer (Conjecture 6.3 in \cite{TV}).
\begin{rem} The conjecture as stated only deals with the case of two cuspidal representations being in base change. Nonetheless, when $l|n$ it is possible in certain cases to have $\pi$ cuspidal and its base change $\rho = \mathbf{bc}(\pi)$ a principal series. Some computations we carried seem to suggest that in this case $\pi$ is still linked to $\rho$ (that is, $r_l(\pi)^{(l)}$ is a Jordan-Holder factor of $T^*(\rho)$), but we do not have equality since $T^0(\rho)$ is now \emph{much bigger} than $r_l(\pi)^{(l)}$.
\end{rem}

The referee raised the interesting question about what happens if $F \supset E$ is an extension of positive characteristic, non-archimedean local fields - everything else being the same. We were primarly interested in the situation over $p$-adic fields (for instance, because the setup of \cite{TV} is for reductive groups over number fields) so we did not consider that case, but a brief glance at the main ingredients in the proof of the main theorem (several results of \cite{V}, the essential tame local Langlands correspondence of \cite{BH1} and \cite{BH2}, and the $l$-modular local Langlands correspondence as in \cite{BH} and \cite{Vig}) shows that each of them holds in the greater generality of $\ch E = \ch F = p$. Therefore, we expect the results of this paper to be easily extendable to the local function field setting.

\subsection{Acknowledgments} This paper would not have seen the light of day without the guidance of my advisor Akshay Venkatesh, who suggested the problem and shared his insights with me. I would also like to thank Daniel Litt for the numerous discussions concerning Local Langlands and Tate cohomology, Daniel Bump for a careful reading of a first draft and his many good advices, as well as Laurent Clozel for proposing improvements and new directions for this project. Finally, I acknowledge an anonymous referee for his/her useful comments.

\section{Notation}
In the course of the paper $k$ will denote a finite field and $k_m$ its unique extension of degree $m$. We use a similar notation for local fields: if $F$ is a finite extension of $\Qp$, $F_m$ indicates its unique unramified extension of degree $m$. \\
In both cases, the Galois groups of the relative extensions $k_m/k$ and $F_m/F$ are cyclic of order $m$, and we call Frobenius and denote $\Frob$ (sometimes also $\Frob_k$ or $\Frob_F$ to clarify what the ground field is) the arithmetic Frobenius, i.e. the automorphism $x \mapsto x^{|k|}$ in the finite field case or a lift of it in the unramified case. In both cases, $\Frob$ is a generator of the Galois group.

There is then an obvious entry-wise action of $\Frob$ on $\GL{n}{k_n}$ or $\GL{n}{F_n}$, which gives us a Galois action on respectively $\GL{n}{k_m}$ or $\GL{n}{F_m}$.

We will use boldface letters (like $\mathbf G$) for algebraic groups and usual font (e.g. $G$) for the group of their points over some field, which will be specified or clear in the context.

The notation $\lambda \vdash n$ indicates that $\lambda$ is a partition of the positive integer $n$.

Throughout this paper we will mainly deal with $l$-adic and $l$-modular representations and we will specify the coefficient field unless it is clear from the context. By an integral $l$-adic representation of a group $G$ we mean an $l$-adic representation $\pi$ over a $\Ql$ or $\overline \Ql$-vector space $V$ for which there exists a $G$-invariant lattice $L$.

We fix an isomorphism $\overline \Ql \cong \C$, which we will use to identify the complex and the $l$-adic local Langlands correspondences. As explained in \cite{Vig}, the bijection given by the $l$-adic local Langlands correspondence between representation of $W(L)$ and admissibile representations of $\GL{n}{L}$ (in particular, the bijection of lemma \ref{LLC} later) only depends on the choice of this isomorphism if the order of the residue field $k_L$ is not a square, and in that case it only depends on the choice of an element $\sqrt { |k_L| } \in \overline \Ql$.

\begin{defn}[Compact induction] \label{compind} Let $G$ be a locally compact, totally disconnected group and $H \subset G$ a closed subgroup. Let $(\sigma, W)$ be a representation of $H$. The induced representation $\Ind_H^G \sigma$ has underlying space \[ \Ind_H^G W = \left\{ f:G \lra W \, | \, f(hg) = \sigma(h) f(g) \, \forall h \in H, g \in G \textnormal{ and $f$ is locally constant} \right\}. \] The locally constant condition is tantamount to saying that the $G$-action on the right: \[ (g.f)(x) = f(xg) \qquad \forall g, x \in G \] gives rise to a smooth representation of $G$ on $\Ind_H^G W$.

The compactly induced representation $\cInd_H^G W$ is given by the restriction of the $G$-action above to the subspace of functions with compact support modulo $H$.
\end{defn}

We now recall the notions of cuspidal and supercuspidal representation with an ad-hoc definition for $\Gl{n}$.
\begin{defn}[Cuspidal and supercuspidal representations]
Let $G = \GL{n}{F}$ or $G= \GL{n}{k}$. \\
For any $ 1 \le  m \le n-1$, consider the maximal upper parabolic subgroup $P_m \subset G$ of matrices whose lower-left $(n-m) \times m$ block is zero. Let $U_m$ be its unipotent radical (consisting of matrices in $P_m$ which coincides with the identity outside of the upper-right $m \times (n-m)$ block). \\
Then the quotient map $ P_m \twoheadrightarrow P_m / U_m $ admits a natural section whose image (in $P_m$) consists of a Levi factor $M_m \cong \Gl{m} \times \Gl{n-m}$: the subgroup of $P_m$ of block-diagonal matrices with blocks of sizes $\{ m, n-m \}$.

Given a representation $\tau$ of $M_m \cong P_m / U_m$, one can inflate it trivially to $P_m$ via the quotient map, and then induce up to $G$: we denote the result of this process by $i^G_{P_m} \tau$.

Let $\pi$ be a representation of $G$. We say that $\pi$ is cuspidal if for all $1 \le m \le n-1$, and for all smooth, admissible representations $\tau$ of a Levi factor $M_m$, we have \[ \Hom_G \left( \pi, i^G_{P_m} \tau \right) = 0. \]
We say that $\pi$ is supercuspidal if for all $1 \le m \le n-1$, and for all representations $\tau$ of a Levi factor $M_m$, we have that $\pi$ is not a subquotient of $i^G_{P_m} \tau$.
\end{defn}
\begin{rem} Recall that in the $p$-adic case (i.e. when $G = \GL{n}{F}$) by representation we mean smooth representation. Notice moreover that in this case the quotient $G / P_m$ is compact, hence ``inducing from $P_m$ to $G$" is un-ambiguous since induction and compact induction coincide.
\end{rem}
As a warning to the reader we want to remark that, unlike in the classical case of irreducible, admissible, complex representations, for $\mod l$ representations the notion of supercuspidal (see for example \cite{V}, chapter 2, sections 2.2 to 2.5) is strictly stronger than that of cuspidal. We will always work with the latter.
\begin{defn}[Cyclic, Local Base Change - $l$-adic case] Let $\pi$ be an $l$-adic smooth irreducible cuspidal representation of $\GL{n}{E}$ and $\rho$ be an $l$-adic smooth irreducible cuspidal representation of $\GL{n}{F}$ which is Galois-invariant. Denote by $\mathcal L_K$ the bijection between the cuspidal, irreducible, $l$-adic  representations of $\GL{n}{K}$ and the $n$-dimensional, irreducible, $l$-adic representations of the Weil group $W(K)$ given by the Local Langlands Correspondence.

We say that $\pi$ and $\rho$ are in base change, and write $\rho = \mathrm{bc}_{\overline \Ql}(\pi)$ if we have \[ \mathrm{res}^{W(E)}_{W(F)} \left( \mathcal L_E (\pi) \right) \cong \mathcal L_F (\rho) \] as $l$-adic representations of $W(F)$.
\end{defn}
The notion of base change for complex representations of $\GL{n}{F}$ (and hence $l$-adic representations) is thus defined as the operation on the automorphic side corresponding to restriction on the Galois side via the Local Langlands correspondance. This prompt the following definition in the $l$-modular setting. %In the modular setting, the compatibility of the Local Langlands correspondance for $\GL{n}{F}$ with reduction $\mod l$ is due to Vigneras (theorem 1.6 in \cite{Vig}), and so is the compatibility of base change with reduction $\mod l$. We recall the relevant theorem for sake of completeness:
\begin{defn}[Cyclic, Local Base Change - $l$-modular case] Let $\pi$ be a $\mod l$ smooth irreducible cuspidal representation of $\GL{n}{E}$ and $\rho$ be a $\mod l$ smooth irreducible cuspidal representation of $\GL{n}{F}$ which is Galois-invariant. We say that $\pi$ and $\rho$ are in $l$-modular base change, and write $\rho = \mathrm{bc}_{\overline \Fl}(\pi)$, if they admit lifts $\widetilde \pi$ and $\widetilde \rho$ to $l$-adic, cuspidal, irreducible representations (respectively of $\GL{n}{E}$ and $\GL{n}{F}$) such that \[ r_l \left( \res^{W(E)}_{W(F)} \left( \mathcal L_E (\widetilde \pi) \right) \right) \cong r_l \left( \mathcal L_F ( \widetilde \rho) \right). \]
\end{defn}
\begin{rem}
By the main theorem of \cite{BH}, the $\mod l$ reduction does not depend on the choice of the lift $\widetilde \pi$: indeed if $\widetilde \pi'$ is another such lift, the theorem says that the $l$-adic Galois representations $\mathcal L_E(\widetilde \pi)$ and $\mathcal L_E(\widetilde \pi')$ have isomorphic reduction $\mod l$, and hence so do their restrictions to $W(F)$. Similarly for the choice of the lift $\widetilde \rho$. \\
Given $\pi$ as above, if an $l$-modular base change $\rho$ exists, then it is unique. Indeed, again by the main theorem of \cite{BH}, the condition of having isomorphic $\mod l$ reduction $r_l \left( \res^{W(E)}_{W(F)} \left( \mathcal L_E (\widetilde \pi) \right) \right) \cong r_l \left( \mathcal L_F ( \widetilde \rho) \right) $ is equivalent to the identical condition on the automorphic side under $l$-adic local Langlands: $\widetilde \rho$ and $\mathcal L_E^{-1} \left( \res^{W(E)}_{W(F)} \left( \mathcal L_F (\widetilde \pi) \right) \right)$ have the same reduction $\mod l$. But $\rho = r_l(\widetilde \rho)$ by assumption, while the reduction of $\mathcal L_E^{-1} \left( \res^{W(E)}_{W(F)} \left( \mathcal L_F (\widetilde \pi) \right) \right)$ is independent of $\rho$.
%Applying the main theorem again in the direction 'Galois to automorphic' proves that $r_l \left( \mathrm{bc}_{\overline \Ql} \left( \widetilde \pi \right) \right)$ is independent of the lift $\widetilde \pi$.
\end{rem}
Notice that by the results in section III.5.10(2) of \cite{V} every irreducible, $\mod l$ cuspidal representation of $\Gl{n}$ admits a lift to an $l$-adic cuspidal representation, which is obviously irreducible - hence each pair $(\pi, \rho)$ as above admits $l$-adic lifts $(\widetilde \pi, \widetilde \rho)$ which one can apply the definition to.
%The class of cuspidal, $l$-modular representations $\pi$ of $\GL{n}{E}$ admitting a cuspidal $l$-adic lift $\widetilde \pi$ is fairly large: it includes for instance every supercuspidal representation (by theorem 2.4 of \cite{Vig}) and every cuspidal representation of level zero (section III.3.1 of \cite{V}), which is the case that we will need.

The above definition is moreover compatible with the semisimple $\mod l$ Langlands correspondence as established by Vigneras, in the following sense:
\begin{defn}[Supercuspidal support, see \cite{Vig}, section 1.2] Let $R = \overline \Ql$ or $R = \overline \Fl$. Let $\pi$ be an irreducible representations of $\GL{n}{F}$ with coefficients in $R$. The supercuspidal support of $\pi$ is the formal sum $\mathrm{sc}(\pi) = \pi_1 + \ldots + \pi_h$ where $\pi_i$ is a supercuspidal, irreducible $R$-representation of $\GL{n_i}{F}$ (for $\sum_i n_i = n$) such that $\pi$ is a subquotient of the normalized parabolic induction $\pi_1 \times \ldots \times \pi_h$. This is well-defined, as explained in the remark of section 1.2 of \cite{Vig}.
\end{defn}
We remark that, as explained in section 1.5 of \cite{Vig}, the supercuspidal support behaves well under reduction $\mod l$: if $\pi$ is an irreducible integral $l$-adic representation of $\GL{n}{F}$ with supercuspidal support $\mathrm{sc}(\pi) = \pi_1 + \ldots + \pi_h$, then each irreducible subquotient of the reduction $r_l(\pi)$ has ($\mod l$) supercuspidal support equal to $\mathrm{sc} \left( r_l(\pi_1) \right) +\ldots +  \mathrm{sc} \left(  r_l(\pi_h) \right)$. We define this formal sum to be the supercuspidal support $\mathrm{sc}(r_l(\pi))$ of the reduction $\mod l$ of $\pi$.

The $l$-modular semisimple local Langlands correspondence is then encoded in the following important result.
\begin{thm}[Vigneras, thm. 1.6 in \cite{Vig}] \label{modularlocallanglands}
Let $\pi$ and $\pi'$ be irreducible integral $l$-adic representation of $\GL{n}{F}$ and $\sigma$, $\sigma'$ be $n$-dimensional integral $l$-adic representation of the Weil group $W(F)$ in semisimple Langlands correspondence, i.e. $\mathrm{sc}(\pi) \leftrightarrow \sigma$ and $\mathrm{sc}(\pi') \leftrightarrow \sigma'$. Then \begin{enumerate}
\item The supercuspidal supports of the reductions coincide  - i.e. $\mathrm{sc}( r_l(\pi)) = \mathrm{sc} (r_l(\pi'))$ - if and only the reductions of the Weil representations are isomorphic, that is, $r_l(\sigma) \cong r_l(\sigma')$.
\item There exists a unique compatible sistem of bijections indexed by the natural numbers $n \ge 1$: \[ \mathrm{Supercusp}_{\overline \Fl} (\GL{n}{F}) \longleftrightarrow \Irrep_{\overline \Fl} ( W(F) )[n] \] between supercuspidal $\mod l$ representations of $\GL{n}{F}$ and irreducible $n$-dimensional $\mod l$ representations of $W(F)$ which induces via the supercuspidal support a semisimple Langlands correspondence over $\overline \Fl$: \[ \pi \to \mathrm{sc}(\pi) \leftrightarrow \sigma \quad \Irrep_{ \overline \Fl} (\GL{n}{F}) \lra \Rep_{\overline \Fl} (W(F)) \] compatible with $\mod l$ reduction, i.e. given $\pi \in \Irrep_{\overline \Ql} (\GL{n}{F})$ and an $n$-dimensional $\sigma \in \Rep_{\overline \Ql} (W(F))$ such that $\mathrm{sc}(\pi)$ is in $l$-adic Langlands correspondence with $\sigma$, then $\mathrm{sc}(r_l (\pi))$ is in $\mod l$ Langlands correspondence with $r_l(\sigma)$.
\end{enumerate}
\end{thm}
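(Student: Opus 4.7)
The plan is to bootstrap from the $l$-adic local Langlands correspondence (Harris--Taylor, Henniart) to construct the semisimple $\mod l$ correspondence of part (2), and then to deduce part (1) from that construction. Two ingredients I would take as inputs: (a) the $l$-adic LLC is compatible with parabolic induction, so it sends the supercuspidal support of an irreducible representation of $\GL{n}{F}$ to the direct sum of the corresponding irreducible Weil representations, and in particular takes integral supercuspidals to integral irreducible Weil representations; and (b) every irreducible $\mod l$ supercuspidal representation of $\GL{m}{F}$ admits an integral $l$-adic supercuspidal lift, which I would obtain via Bushnell--Kutzko types by lifting an extended maximal simple type to characteristic zero and compactly inducing.

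Using these, I would first define a candidate correspondence on $\mod l$ supercuspidals: given such a $\pi$ of $\GL{m}{F}$, pick an integral $l$-adic supercuspidal lift $\tilde\pi$, apply the $l$-adic LLC to obtain an integral irreducible $m$-dimensional Weil representation $\tilde\sigma = \mathcal L_F(\tilde\pi)$, and set $\sigma(\pi) := r_l(\tilde\sigma)$. For an arbitrary irreducible $\mod l$ representation $\pi$ of $\GL{n}{F}$ with supercuspidal support $\mathrm{sc}(\pi) = (\pi_1, \dots, \pi_r)$, the Weil correspondent would then be declared to be $\bigoplus_i \sigma(\pi_i)$.

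The crux, and what I expect to be the main obstacle, is well-definedness of the supercuspidal correspondence: if $\tilde\pi$ and $\tilde\pi'$ are two integral $l$-adic supercuspidal lifts of the same $\mod l$ supercuspidal $\pi$, I need $r_l(\mathcal L_F(\tilde\pi)) \cong r_l(\mathcal L_F(\tilde\pi'))$. The natural route is through the explicit Bushnell--Henniart LLC: two such lifts share the same simple endo-class mod $l$, and the Bushnell--Henniart machinery attaches Weil parameters to endo-classes via admissible pairs and tame lifting in a way that respects congruences. I would trace each step of this construction to confirm that congruence of the inducing data mod $l$ forces congruence of the resulting Weil parameters mod $l$; a complementary line of attack would use continuity of the LLC in families, i.e. deformation theory at the supercuspidal lift.

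With well-definedness in hand, part (1) drops out almost formally. The forward direction is immediate from the construction. For the converse, a semisimple $\mod l$ Weil representation decomposes uniquely into irreducibles, so $r_l(\sigma) \cong r_l(\sigma')$ forces equality of the multisets $\{\sigma(\pi_i)\} = \{\sigma(\pi_j')\}$; combined with injectivity of $\sigma(-)$ on $\mod l$ supercuspidals, itself a consequence of well-definedness plus injectivity of the $l$-adic LLC, this yields equality of the $\mod l$ supercuspidal supports. Finally, uniqueness of the compatible system in part (2) follows by comparing any such system with the construction above and chasing a single integral lift of each $\mod l$ supercuspidal.
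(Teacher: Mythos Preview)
The paper does not prove this statement at all: it is quoted verbatim as a result of Vign\'eras (Theorem~1.6 of \cite{Vig}) and used as a black box. The sentence immediately preceding it reads ``We recall the relevant theorem for sake of completeness,'' and no proof is supplied. So there is nothing in the paper to compare your proposal against.

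That said, your sketch is broadly in the spirit of Vign\'eras' actual argument in \cite{Vig}: one constructs the $\bmod\, l$ correspondence by lifting supercuspidals to characteristic zero, applying the $l$-adic correspondence, and reducing. You have correctly identified the crux as well-definedness, i.e.\ independence of the choice of lift. However, this step is genuinely hard and is the heart of Vign\'eras' paper; appealing to ``tracing each step of the Bushnell--Henniart construction'' or to ``continuity of the LLC in families'' is not a proof but a hope. Vign\'eras' argument instead goes through a reduction to the level-zero case via the type-theoretic structure of supercuspidals, together with a careful analysis of how the tame correspondence and the unramified twists behave under reduction. Your proposal also glosses over a subtlety in part~(2): one must check that $r_l(\tilde\sigma)$ is actually irreducible when $\pi$ is $\bmod\, l$ supercuspidal, which is not automatic and requires an argument relating supercuspidality on the automorphic side to irreducibility on the Galois side after reduction.

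In short: for the purposes of this paper you may simply cite the result, as the author does; if your intent was to reprove Vign\'eras' theorem, your outline is reasonable but the well-definedness step would need substantially more than what you have written.
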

Suppose now that $\pi$ (respectively $\rho$) is an irreducible, supercuspidal, $\mod l$ representations of $\GL{n}{E}$ (respectively, $\GL{n}{F}$). According to our definition, these are in base change if for two cuspidal $l$-adic lifts $\widetilde \pi$ and $\widetilde \rho$ we have $r_l \left( \mathcal L_E( \widetilde \pi) \right)|_{W(F)} \cong r_l \left( \mathcal L_F(\widetilde \rho) \right)$. \\
On the other hand, Vigneras' theorem provides us with the $l$-modular representations $^{G} \pi \in \Irrep_{\overline \Fl}( W(F))$ and $^{G} \rho \in \Irrep_{\overline \Fl}( W(E))$, and compatibility with reduction $\mod l$ means exactly that $^{G} \pi = r_l \left( \mathcal L_E( \widetilde \pi) \right)$ and $^{G} \rho = r_l \left( \mathcal L_F( \widetilde \rho) \right)$.
Therefore, for supercuspidal $\pi$ and $\rho$, $l$-modular base change corresponds exactly to restriction on the Galois side under local Langlands, without the necessity of taking $l$-adic lifts. \\

We recall here the definition of Shintani s correspondence, since it will be used later. Let $k_m / k$ be an extension of finite fields with $\gamma$ the Frobenius element and $(\rho, V)$ be a Galois-invariant, irreducible $l$-adic representation of $\GL{n}{k_m}$, so that there exists an operator $T_{\gamma}: (\rho \circ \gamma, V) \lra (\rho, V)$ realizing the isomorphism of representations.
\begin{thm}[Theorem 1 in \cite{Sh}] For a suitable normalization of $T_{\gamma}$, there exists an irreducible character $\chi_{\rho}$ of $\GL{n}{k}$ which satisfies \[ \Tr \left( I_{\gamma} (\rho(g)) \right) = \chi_{\rho} \left( \Norm{k_m}{k} (g) \right) \qquad \forall g \in \GL{n}{k_m} \] where the norm map is defined as $\Norm{k_m}{k}(g) = \gamma^{m-1}(g) \cdot \ldots \cdot \gamma(g) \cdot g$. \\
Moreover, the mapping $\rho \mapsto \chi_{\rho}$ is a bijection between the Galois-invariant irreducible representations of $\GL{n}{k_m}$ and the irreducible representations of $\GL{n}{k}$.
\end{thm}
The Shintani base change is thus the inverse of the bijection above, so that $\rho$ is the Shintani base change of the representation having character $\chi_{\rho}$. \\

Consider a modular representation $(\alpha, V)$ of a group $G$ (that is, a representation defined over the $\overline \Fl$-vector space $V$). We denote by $\alpha^{(l)}$ its Frobenius twist, that is the representation where the map $\alpha: G \lra \mathrm{GL}(V)$ is the same, as well as the underlying set of $V$, but the scalar action of $\overline \Fl$ on $V$ is twisted by Frobenius: \[ V^{(l)}:=V \otimes_{(\overline \Fl, \Frob_l)} \overline \Fl \] so that scalar multiplication $*$ on $V^{(l)}$ is given by \begin{equation} \label{deftwist} \lambda * v = \lambda^{\frac{1}{l}} \cdot v \qquad \forall \lambda \in \overline \Fl, \, v \in V. \footnote{We denote by $\lambda^{1/l}$ the \emph{unique} $l$-th root of $\lambda \in \overline \Fl$, which is a well-posed notion since the Frobenius automorphism $x \mapsto x^l$ of $\overline \Fl$ is a bijection.} \end{equation}

For the notion of type of a level zero representation see Vigneras in \cite{V}, chapter III, section 3.1; in particular a representation of level zero having minimal-maximal type is one whose restriction to a maximal parahoric $K^0$ (that is, a maximal compact in $\GL{n}{F}$, necessarily conjugate to $\GL{n}{\OO_F}$) contains a representation $\sigma$ of $K^0$ trivial on the first congruence subgroup $K^1$; $\sigma$ hence corresponds to the inflation (under the quotient map $K^0 \twoheadrightarrow K^0 / K^1 \cong \GL{n}{k_F}$) of a cuspidal representation of $\GL{n}{k_F}$. An identical description applies with $E$ in the place of $F$.

We say that a representation $\rho$ of $\GL{n}{F}$ is $\Gal(F/E)$-equivariant when for each element $\gamma \in \Gal(F/E)$ the representations $\rho$ and $\rho \circ \gamma$ are abstractly isomorphic (here $\gamma$ acts componentwise on $\GL{n}{F}$).
In the situation of Theorem \ref{mainthm}, once one fixes a generator $\gamma$ for the cyclic $l$-group $\Gal(F/E)$, the condition of being $\Gal(F/E)$-equivariant is equivalent to $\rho \cong \rho \circ \gamma$ for that particular generator: one can then extend $\rho$ \emph{non-uniquely} to a representation of $\GL{n}{F} \rtimes \Gal(F/E)$. \footnote{In details, choose an intertwining operator $T: \left( \rho , V \right) \lra \left( \rho  \circ \gamma, V \right)$ realizing the isomorphism, and then extend $\rho$ to $\Rho: \GL{n}{F} \rtimes \Gal(F/E) \lra \mathrm{GL}(V)$ as $\Rho(g, \gamma^k) = \rho(g) \circ T^k \in \mathrm{GL}(V)$. The group operation on the semidirect product is given as $(h_1, \gamma^{k_1})(h_2, \gamma^{k_2}) = (h_1 \gamma^{k_1}(h_2), \gamma^{k_1+k_2})$.}

Notice that both representations in the theorem, and in fact all $l$-adic representations that will be mentioned in this paper, are $l$-integral: that is, there exists a $\overline \Zl$-lattice $L \subset V$ (where $V$ is the underlying space of the $l$-adic representation) which is stable under the group action and generates $V$ (over $\overline \Ql$). For a proof that such a lattice exists, and that in fact it can be realized over a finite extension of $\Ql$, see \cite{V}, chapter II, section 4.12. We denote by $\overline \Zl$ the integral closure of $\Zl$ in a fixed separable closure $\overline \Ql$ of $\Ql$.

%Suppose we have then a $\Gal(F/E)$-equivariant $l$-adic representation $\left( \rho , V \right)$ of $\GL{n}{F}$ which is $l$-integral, and fix a generator $\gamma \in \Gal(F/E)$. To define Tate cohomology we need to \emph{pick} an invariant lattice $L \subset V$ and then \emph{fix} a $\overline \Zl$-linear isomorphism of $\GL{n}{F}$-representations \begin{equation} \label{defnT} \mathrm T: \left( \rho, L \right)  \stackrel{\cong}{\lra} \left( \rho \circ \gamma, L \right); \end{equation} notice however that by Schur's lemma $\mathrm T$ is well-defined up to a multiplicative scalar, and since obviously $\mathrm T$ has order (dividing) $l$, it is in fact well-defined up to an $l$-th root of unity. \\
Suppose we have then a $\Gal(F/E)$-equivariant $l$-adic representation $\left( \rho , V \right)$ of $\GL{n}{F}$ which is $l$-integral, and fix a generator $\gamma \in \Gal(F/E)$. To define Tate cohomology we need to fix an isomorphism of $\GL{n}{F}$-representations \begin{equation} \label{defnT} \mathrm T: \left( \rho, V \right)  \stackrel{\cong}{\lra} \left( \rho \circ \gamma, V \right); \end{equation} and then \emph{pick} a $\GL{n}{F}$-invariant lattice $L \subset V$ which is preserved by $\mathrm T$. % and then \emph{fix} a $\overline \Zl$-linear notice however that by Schur's lemma $\mathrm T$ is well-defined up to a multiplicative scalar, and since obviously $\mathrm T$ has order (dividing) $l$, it is in fact well-defined up to an $l$-th root of unity. \\

We denote by $N = \id + \mathrm T + \ldots + \mathrm T^{l-1}$ the ``norm" and we have the obvious $2$-periodic chain complex of $\overline \Zl$-modules \[ \ldots L \stackrel{\id - \mathrm T}{\lra} L \stackrel{N}{\lra} L \stackrel{\id - \mathrm T}{\lra} L \stackrel{N}{\lra} L \ldots \]
The cohomology of this complex is by definition Tate cohomology: \[ T^0(L) = \ker (\id - \mathrm T) / \im(N) \qquad T^1(L) = \ker(N) / \im(\id -\mathrm T); \] these are naturally $\overline \Fl$-modules with an action of the groups of the fixed points $\GL{n}{F}^{\gamma} = \GL{n}{E}$: hence one gets two $l$-modular representations of $\GL{n}{E}$.
We will denote their semisimplifications by $T^i(\rho)$; in general, these may depend on the choices of $\mathrm T$ and of an invariant lattice $L$ but the following lemma will apply to all the $l$-adic representations $\rho$ considered in this paper.
\begin{lem} \label{T0welldefined} Suppose $\rho$ as above is irreducible and cuspidal. Then $T^0(\rho)$ is well-defined and independent of the choices of $\mathrm T$ and $L$.
\end{lem}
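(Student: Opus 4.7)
My plan is to reduce the well-definedness of $T^0(\rho)$ to two sub-claims: (i) any two $\rho$-stable $\overline \Zl$-lattices in $V$ are homothetic, and (ii) once a lattice $L$ is fixed, the intertwiner $\mathrm T$ is determined up to an $l$-th root of unity, and such a rescaling preserves the Tate cohomology as a $\GL{n}{E}$-module.

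For claim (i), I would use absolute irreducibility: $\End_{\GL{n}{F}}(V) = \overline \Ql$ by Schur's lemma, so for two stable lattices $L_1, L_2$ the set of scalars $c \in (\overline \Ql)^*$ with $c L_2 \subseteq L_1$ is a nonzero fractional ideal in $\overline \Zl$. Working over a finite-extension $\OO$-structure with uniformizer $\pi$, this ideal is principal; after rescaling $L_2$ we may arrange $L_2 \subseteq L_1$ with $\pi L_1 \not\subseteq L_2$. If $L_1 \neq L_2$, the image of $L_2$ in the reduction $r_l(\rho)$ realized on $L_1/\pi L_1$ would be a proper nonzero submodule, contradicting irreducibility. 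This reduction is indeed irreducible in our setting: $\rho$ is compactly induced from a representation $\lambda$ of $F^* \GL{n}{\OO_F}$ inflated from a cuspidal representation of $\GL{n}{k_F}$, and such a finite-group cuspidal stays irreducible after reduction mod $l$ (thanks to $n$ coprime to $pl$), so compact induction yields $r_l(\rho)$ irreducible and cuspidal as well. Granted uniqueness up to homothety, any intertwiner $\mathrm T$ must satisfy $\mathrm T(L) = \mu L$ for some $\mu \in (\overline \Ql)^*$; the normalization $\mathrm T^l = \id$ then forces $\mu^l = 1$, so $\mu$ is a unit and $\mathrm T(L) = L$. Similarly, scaling $L$ by any $c \in (\overline \Ql)^*$ rescales the chain complex uniformly and produces a canonically isomorphic Tate cohomology.

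For claim (ii), Schur's lemma plus the requirement $\mathrm T^l = \id$ (needed for $\Rho$ to define a genuine representation of the semidirect product) pins $\mathrm T$ down up to a primitive $l$-th root of unity $\zeta \in \overline \Zl$. The key observation is that $\zeta \equiv 1$ modulo the maximal ideal $\mathfrak m$ of $\overline \Zl$, because the minimal polynomial $\Phi_l$ of $\zeta$ reduces modulo $l$ to $(x-1)^{l-1}$. Hence modulo $\mathfrak m$ the operators $\mathrm T$ and $\zeta \mathrm T$ coincide, and since $T^0(\rho)$ is annihilated by $l$, computing it with either choice of intertwiner gives canonically isomorphic $\FFl$-modules. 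The main obstacle is establishing the irreducibility of $r_l(\rho)$ needed for (i); once this is granted, the rest of the argument is essentially formal.
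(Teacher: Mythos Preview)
Your treatment of claim (i) --- uniqueness of the stable lattice up to homothety via irreducibility of $r_l(\rho)$, and the resulting $\mathrm T$-stability of $L$ --- is correct and matches the paper's argument essentially verbatim. The independence of $T^0$ from the choice of lattice then follows formally, as you say.

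The gap is in claim (ii). You observe correctly that $\zeta \equiv 1 \bmod \mathfrak m$, but this does \emph{not} suffice to conclude that the Tate cohomology groups computed with $\mathrm T$ and with $\zeta \mathrm T$ on the \emph{integral} lattice $L$ agree. The point is that $\ker(\id - \mathrm T)$ and $\ker(\id - \zeta \mathrm T)$ are computed inside $L$, not inside $L/\mathfrak m L$, and they can be very different sublattices. For a toy illustration, take $L = \OO$ (the ring of integers in a finite extension of $\Ql$ containing $\zeta$) with $\mathrm T = \id$: then $\ker(\id - \mathrm T) = L$ and $T^0 = L/lL \neq 0$, whereas $\ker(\id - \zeta\,\id) = \ker\bigl((1-\zeta)\,\id\bigr) = 0$ since $L$ is torsion-free, so the corresponding $T^0$ vanishes. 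The fact that $T^0$ is killed by $l$ does not help, because the ambiguity already enters at the level of the kernel, before any quotient is taken.

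What the paper does instead is to first pass from $L$ to its reduction: the short exact sequence $0 \to L \xrightarrow{\cdot l} L \to L/lL \to 0$ gives a long exact sequence in Tate cohomology, and since multiplication by $l$ is zero on $T^0(L)$ one obtains an injection $T^0(L) \hookrightarrow T^0(L/lL)$. The crucial input is that $T^1(L) = 0$ --- proved via the Kirillov model as in Theorem~\ref{T1dies} --- which upgrades this injection to an isomorphism $T^0(L) \cong T^0(L/lL)$. Only on the genuinely modular object $L/lL$ does your observation $\zeta \equiv 1$ bite (and the paper cites \cite{TV}, section 6.1, for this step). So the missing ingredient in your argument is precisely the reduction to the mod $l$ setting, which in turn rests on the vanishing of $T^1$; without it, the congruence $\zeta \equiv 1 \bmod \mathfrak m$ is not enough.
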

\begin{proof}
%As a preliminary we show that $\mathrm T$ preserves $L$. By the results in section 3.1 of chapter 3 of \cite{V}, $\rho$ has irreducible $\mod l$-reduction, and then it is an easy consequence of the Brauer-Nesbitt principle that there is, up to homothety, a unique $\GL{n}{F}$-invariant lattice $L$ \footnote{see for instance the remark in paragraph I.9.5 of \cite{V}}. As $\mathrm T(L)$ is another $\GL{n}{F}$-invariant lattice, it must be homothetic to $L$, and since $\mathrm T$ has finite order (dividing) $l$, the homothety is given by multiplication by an $l$-unit - that is to say, $\mathrm T(L) = L$.
Notice that by Schur's lemma, $\mathrm T^l$ is a scalar $\lambda \in \overline \Ql$, hence by replacing $\mathrm T$ with $\lambda^{1/l} \mathrm T$, we can assume that $\mathrm T$ had finite order $l$: the only ambiguity left in the definition of $\mathrm T$ is the choice of $l$-th root of $\lambda$, i.e. $\mathrm T$ is well-defined up to an $l$-th root of unity in $\overline \Ql$. \\
Fix then one such operator $\mathrm T$ with $\mathrm T^l = \id$. By proposition 4.5 in \cite{BH} (and the discussion preceding it in sections 4.3 and 4.4), the irreducible and cuspidal representation $\rho$ admits a unique stable lattice $L$ up to homothety. Since $\mathrm T(L)$ is another stable lattice, we must have $\mathrm T(L) = z L$ for some $z \in \overline \Ql^*$: then $L  =\id(L) = \mathrm T^l(L) = z^l L$ shows that $z \in \overline \Zl^*$, and in particular $\mathrm T(L) = L$ - i.e. the intertwining operator preserves any stable lattice $L$. Fix one such stable lattice $L$: we now prove that $T^0(L)$ is independent of this choice. \\
%The proposition in section 6.1 of \cite{TV} (which is stated for $\overline \Fl$-representation but whose proof goes through word by word for $\overline \Ql$-representation) guarantees that the choice of $\mathrm T$ is unique, up to the choice of an $l$-th root of unity. This ambiguity does not influence if a lattice $L$ is $\mathrm T$-invariant or not, and disappears once we take Tate cohomology, since $T^i(\rho)$ is an $l$-modular representation hence each $l$-th root of unity reduces to $1$. This shows that $T^0(\rho)$ is independent of $\mathrm T$, so henceforth we fix one such $\mathrm T$ as above.
Consider the short exact sequence of $\GL{n}{F} \rtimes \Gal(F/E)$-modules induced by ``multiplication by $l$" on $L$: \[ 0 \lra L \stackrel{\cdot l}{\lra} L \lra L / lL \lra 0 \] and take Tate cohomology with respect to the fixed choice of $\mathrm T$.
The long exact sequence in cohomology reads \[ \ldots \lra T^0(L) \stackrel{\cdot l}{\lra} T^0(L) \lra T^0 \left( L / l L \right) \lra T^1(L) \lra \ldots \]
Since $T^0(L)$ is an $\overline \Fl$-module, multiplication by $l$ factors through the zero map, so our long exact sequence becomes \[ 0 \lra T^0(L) \lra T^0 \left( L / l L \right) \lra T^1(L) \lra \ldots \]
It remains to prove that $T^1(L) = 0$, let us assume this for the time being to conclude the proof of the lemma. $T^1(L) = 0$ yields that \[ T^0(L) \cong T^0 \left( L / l L \right), \] so that $T^0(L)$ only depends on the reduction $L / l L$, which is an irreducible $\mod l$-representation by theorem III.1.1(d) in \cite{V}. In particular, $L / l L$ is the irreducible $\mod l$ reduction of $\rho$, which is independent of the choice of lattice $L$ by the Brauer-Nesbitt principle. Hence, $T^0(L)$ will be independent of the choice of the lattice $L$. \\
Finally, we prove independence on the choice of the intertwiner $\mathrm T$ (such that $\mathrm T^l = \id$): let then $\mathrm T$ and $\mathrm T' = \zeta_l \mathrm T$ be two such intertwiners, where $\zeta_l$ is an $l$-th root of unity: notice that a $\GL{n}{F}$-stable lattice $L$ will be preserved by $\mathrm T$ if and only if is preserved by $\mathrm T'$, and we have $T^0_{\mathrm T}(L) \cong T^0_{\mathrm T} (L / lL)$ and similarly $T^0_{\mathrm T'}(L) \cong T^0_{\mathrm T'} (L / lL)$. Now proposition 6.1 in \cite{TV} proves that in the modular setting the Tate cohomology is, up to isomorphism, independent of the choice of the intertwiner $\mathrm T$, hence $T^0_{\mathrm T} (L / lL) \cong T^0_{\mathrm T'}(L / l L)$ which concludes the proof of the lemma together with the following theorem.
\end{proof}
\begin{thm} \label{T1dies} Let $\rho$ be an irreducible, integral, cuspidal $l$-adic representation of $\GL{n}{F}$ (resp. $\GL{n}{k_l}$) which is $\Gal(F/E)$-invariant (resp. $\Gal(k_l/k)$-invariant). Then $T^1(\rho) = 0$.
\end{thm}
\begin{proof} We will give the proof in the case of $\GL{n}{F}$: the proof in the finite field setting is exactly identical, since the main ingredients are the Kirillov model - which exists both for $\GL{n}{F}$ and for $\GL{n}{k_l}$ - and an explicit computation, which is identical in the two cases.

Consider then the Kirillov model: denoting by $\psi: F \lra \overline \Ql^*$ a non-degenerate character of the upper unipotent subgroup $U_n$, we can consider the mirabolic representation $\tau = \cInd^{P_n}_{U_n} \psi$, where $P_n$ is the mirabolic subgroup. By the discussion in section III.1.1 of \cite{V}, $\tau$ is irreducible, $l$-integral and with irreducible reduction $\mod l$. Moreover, we have an isomorphism $\res^{\GL{n}{F}}_{P_n} \rho \cong \tau$ as representations of $P_n$.

Therefore it suffices to show that $T^1\left( \rho|_{P_n(F)} \right) = 0$ since $P_n(F)^{\Gal(F/E)} = P_n(E)$ and restriction of the group action to a subgroup is obviously compatible with the procedure described to define $T^1$. \\
Consider then $\tau \cong \rho|_{P_n(F)}$: by assumption, we have $\tau \cong \tau \circ \gamma$ for a generator $\gamma$ of $\Gal(F/E)$, and we can choose the intertwiner to be \[ \mathrm T: \tau \lra \tau \circ \gamma \textnormal{ defined as } \left( \mathrm T f \right)(g) = f \left( \gamma^{-1}g \right) \] where we use the function model for the compactly induced representation $\tau$, as in definition \ref{compind}.

Fix a choice of coset representatives $\left\{ U_n(F) g \right\}$ for $U_n(F) \backslash P_n(F)$ such that $U_n(F) \gamma(g)$ is another coset representative: in other words, $\Gal(F/E)$ acts on these cosets; this can be done since $\gamma$ preserves $U_n(F)$. \\
Pick then the $\overline \Zl$-lattice $L \subset \tau$ generated by the characteristic functions $1_g$, where $1_g$ is the function supported on the coset $U_n(F)g$ and mapping $g \mapsto 1$. Since we have $\mathrm T \left( 1_g \right)= 1_{ \gamma(g)}$, the lattice $L$ is preserved by the intertwiner operator. \\
Recall that $T^1(\rho) = \ker N / \im(1- \mathrm T)$. Then a function $f = \sum_g a_g 1_g$ is in the kernel of the norm operator $N$ exactly when \[ 0 = Nf = \sum_{g = \gamma(g)} a_g ( l \cdot 1_g ) + \sum_{g \neq \gamma(g)} a_g \left( \sum_{i=0}^{l-1} 1_{\gamma^i(g)} \right) = \sum_{g = \gamma(g)} (a_g \cdot l) 1_g + \sum_{g \neq \gamma(g)} \left( \sum_{i=0}^{l-1} a_{\gamma^i(g)} \right) 1_g. \]
Now the coefficients $a_g$ are in $\overline \Zl$ which is torsion-free, hence for each $\gamma$-fixed $g$ we must have $a_g \cdot l = 0$ which implies $a_g=0$. For each $g$ which is not Galois-fixed, we must instead have $\sum_{i=0}^{l-1} a_{\gamma^i(g)} = 0$.

A basis of $\ker (N)$ is then given by the functions $\left\{ 1_g - 1_{\gamma(g)} \right\}$ as $g$ varies among non-Galois fixed elements. But then it is immediate to see that $1_g - 1_{\gamma(g)} = (1- \mathrm T) 1_g \in \im (1- \mathrm T)$ which proves that $T^1(\rho) =0 $.
\end{proof}

\section{Characters of finite general linear groups}
In this section we describe the theory of characters for the finite general linear group $\GL{n}{k}$. This has been completely worked out by Green in \cite{G} in the complex case. The main idea is to describe conjugacy classes $c$ in $\GL{n}{\F{q}}$ by using representatives in the form of companion matrices, and parametrize those by partition-valued functions $\nu_c$ on the set of irreducible polynomials with $\F{q}$-coefficients. Some amount of combinatorics is then required to explicitly describe the characters, in particular the Green polynomials (see for example Macdonald in \cite{M}) are brought into the picture and to explicitly compute a given character on an element it is necessary to know the value of a specific Green polynomial at some power of $q$.

The description is much simpler for cuspidal characters: both the construction and the explicit computation are easier in this case, and the required Green polynomials have a very neat form for any general $n$. We follow the construction of Springer in $\cite{S2}$ to construct the cuspidal characters.

Let $\mathbf G$ be a reductive algebraic group over $k$. A maximal $k$-torus $\mathbf T \subset \mathbf G$ is called \emph{minisotropic} if its $k$-rank (the dimension of the maximal split $k$-subtorus) is as small as possible. For $\mathbf G = \mathbf {GL}_n$, this means that the maximal $k$-split subtorus is the center. In particular \[ \mathbf T = \Res {k_n}{k}{\mathbf G_m} \] is a minisotropic torus for the $k$-group $\mathbf {GL}_n$, and its $k$-points are canonically isomorphic to $k_n^*$ (up to the action of the cyclic Galois group of $k_n / k$).

In a similar way, when $\lambda=(n_1, \ldots, n_r)$ is a partition of $n$ we can define \[ F_{\lambda} = \prod_{ 1 \le i \le r} k_{n_i} \] which is a $k$-algebra where $k$ embeds diagonally into each factor. We then obtain a torus \[ \mathbf T_{\lambda} = \Res{F_{\lambda}}{k}{\mathbf G_m} \] that is embedded in $\mathbf {GL}_n$ by having it act on $\Res{F_{\lambda}}{k}{\mathbf A^1}$ - here $\mathbf A^1 = \mathrm{Spec} F_{\lambda}[x] $ is the affine line over $F_{\lambda}$. This embedding leads to an isomorphism \[ \mathbf T_{\lambda}(k) = T_{\lambda} \cong \prod_i k_{n_i}^*. \] In fact, both the embedding and the isomorphism are only defined up to conjugation by the Weyl group $\mathbf W_{\lambda}$.
\begin{exmp}
If the partition $\lambda$ has no repeated elements, the Weyl group $W_{\lambda} = \mathbf W_{\lambda} (k)$ is just the product of the cyclic Galois groups corresponding to each extension $k_{n_i} / k$. In general, $W_{\lambda}$ will also permute copies of Weil restrictions of the same degree.
\end{exmp}

\begin{rem} It is a general fact of the theory of reductive groups over finite fields that every torus of $\mathbf {GL}_n$ is realized as above, for some partition $\lambda$ and some choice of embedding $\mathbf T_{\lambda} \hookrightarrow \mathbf {GL}_n$.
\end{rem}

For a torus $T_{\lambda}$ consider now the space $K(T_{\lambda})$ of $\overline{\Ql}$-valued functions on it, and let $I \left( T_{\lambda} \right)$ be the subspace of $W_{\lambda}$-invariant functions. Denote \[ I_n = \bigoplus_{\lambda \vdash n} I \left( T_{\lambda} \right). \]
\begin{thm} \label{isom} Let $\Cl(\GL{n}{k})$ be the space of $\overline{\Ql}$-valued class functions on $\GL{n}{k}$. The function $\gamma_n: I_n \lra \Cl(\GL{n}{k})$ defined as follows is an isomorphism:
\begin{equation} \left( \gamma_n \Phi \right)(x) = \sum_{\lambda \vdash n} | W_{\lambda} |^{-1} \sum_{t \in T_{\lambda}} H(x ; \lambda, t) \Phi_{\lambda}(t) \qquad x \in \GL{n}{k}
\end{equation}
where $\Phi = \left( \Phi_{\lambda} \right) \in \bigoplus_{\lambda \vdash n} I \left( T_{\lambda} \right)$ and $H(x; \lambda, t)$ is defined in terms of the Green polynomials as in formula 5.18 of \cite{S2}.
\end{thm}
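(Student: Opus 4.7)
The plan is to show that $\gamma_n$ is an isometry (with respect to natural inner products on both sides) and then compare dimensions. Isometry gives injectivity; matching dimensions then forces surjectivity.

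First I would unwind the definition of $H(x;\lambda,t)$: by the formula 5.18 of \cite{S2} referenced in the statement, these functions are the values of the Deligne--Lusztig virtual characters $R_{T_\lambda}^{\mathbf{GL}_n}(\hat t)$, where $\hat t \in \widehat{T_\lambda}$ is the character dual to $t \in T_\lambda$ under the canonical duality for tori over $k$. So $\gamma_n \Phi$ should be thought of as a ``Fourier expansion'' of a class function on $\GL{n}{k}$ against the family of DL characters attached to all minisotropic-in-a-Levi tori.

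Next I would check that $\gamma_n$ is an isometry. Put on $I(T_\lambda)$ the inner product
\[ \langle f_1, f_2 \rangle_{T_\lambda} = |W_\lambda|^{-1} \sum_{t \in T_\lambda} f_1(t) \overline{f_2(t)}, \]
and on $I_n$ the orthogonal sum over $\lambda \vdash n$; on $\Cl(\GL{n}{k})$ take the usual character inner product. Expanding $\langle \gamma_n \Phi^{(1)}, \gamma_n \Phi^{(2)} \rangle$ one gets a double sum of inner products $\langle H(\cdot;\lambda,t), H(\cdot;\mu,s) \rangle$, which by the orthogonality theorem for Deligne--Lusztig characters (the disjointness theorem: DL characters from non-conjugate pairs $(T,\theta)$ are orthogonal, and the self-pairing gives exactly the size of the stabilizer in the Weyl group) collapses to $\sum_\lambda \langle \Phi^{(1)}_\lambda, \Phi^{(2)}_\lambda \rangle_{T_\lambda}$. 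This is the key computational step, and it is the main obstacle: one has to verify carefully the normalizations in the definition of $H$ so that the Weyl group factors $|W_\lambda|^{-1}$ appearing in the formula for $\gamma_n$ exactly match what the orthogonality relation produces. In particular, isometry immediately yields injectivity of $\gamma_n$.

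Finally I would match dimensions. On the target side, $\dim \Cl(\GL{n}{k})$ equals the number of conjugacy classes of $\GL{n}{k}$, which by rational canonical form is parametrized by functions $\nu$ from the set of monic irreducible polynomials over $k$ (other than $X$) to partitions, with $\sum_f (\deg f) \cdot |\nu(f)| = n$. On the source side, $\dim I_n = \sum_{\lambda \vdash n} |T_\lambda / W_\lambda|$, i.e.\ the total number of $W_\lambda$-orbits of elements of $T_\lambda = \prod_i k_{n_i}^*$ as $\lambda = (n_1,\ldots,n_r)$ ranges over partitions of $n$. A combinatorial argument, grouping factors of $T_\lambda$ by the Frobenius orbit (equivalently, by the minimal polynomial) of a coordinate, identifies these orbits with the same set of partition-valued functions $\nu$. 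Hence $\dim I_n = \dim \Cl(\GL{n}{k})$, and isometry upgrades to an isomorphism, as desired.
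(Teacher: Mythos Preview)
The paper does not actually prove this theorem; it simply cites theorem~6.3 of \cite{S2}. Your sketch supplies a proof, and its architecture---isometry via orthogonality, then a dimension count---is correct and is essentially Springer's own route (carried out, historically, before Deligne--Lusztig theory existed, hence in the language of Green polynomials rather than $R_T^G$).

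One point to fix: your identification of $H(x;\lambda,t)$ with a Deligne--Lusztig character value $R_{T_\lambda}^{\mathbf{GL}_n}(\hat t)(x)$ is not correct. From formula~5.18 of \cite{S2} (and see its use later in this paper), $H(x;\lambda,t)$ is a product of Green polynomials determined by the Jordan data of $x$ relative to $t$; it vanishes unless $x^{ss}$ is conjugate to $t$, and already for $n=1$ it is the Kronecker delta $\delta_{x,t}$ on $k^*$, not a character value. The correct relationship is that $H$ is the \emph{kernel} of the transform: up to sign, $R_{T_\lambda}^G(\theta)(x) = \sum_{t\in T_\lambda} H(x;\lambda,t)\,\theta(t)$, so $H(x;\lambda,\cdot)$ is the inverse Fourier transform on $T_\lambda$ of $\theta\mapsto R_{T_\lambda}^G(\theta)(x)$, not a single $R_T^G$-value. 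This does not break your argument: the orthogonality you actually need,
\[
\big\langle H(\cdot;\lambda,t),\, H(\cdot;\mu,s)\big\rangle_G \;=\; \delta_{\lambda,\mu}\,\big|\{w\in W_\lambda : w\cdot t = s\}\big|
\]
(with the normalization caveat you yourself flag), is exactly what Springer proves directly via the orthogonality of Green polynomials, and it is equally what one recovers from Deligne--Lusztig orthogonality after a Plancherel step on $T_\lambda$. With that correction the isometry computation goes through as you describe, and your dimension count via rational canonical form versus $W_\lambda$-orbits on $T_\lambda$ is the standard one.
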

\begin{proof} This is theorem 6.3 of \cite{S2}.
\end{proof}
The inverse map is denoted \[ \pi_n: \Cl(\GL{n}{k}) \lra I_n \] having components $\pi_n = \left( \pi_{\lambda} \right)_{\lambda \vdash n}$. The functions $\pi_{\lambda} f \in I \left( T_{\lambda} \right)$ are called the \emph{principal parts} of $f$.

We recall here some facts about the Green polynomials which we will need later (see \cite{S2} for a proof).
\begin{prop}\label{greenpolys}
Let $(\lambda, \mu)$ be an ordered pair of partitions with the same length $| \lambda| = | \mu | \le n$. Then the Green polynomial $Q(\lambda, \mu)$ is an integer, and in fact it is a polynomial function in $q = | k |$ with integer coefficients. Hence we can see it as a polynomial map \[ Q(\lambda, \mu): \{ \textnormal{ finite fields } \} \lra \Z \] which evaluates the indeterminate at the size of the finite field chosen. \\
Moreover, if $ | \lambda | = n $ and $r$ is the number of parts of $\lambda$, then \[ Q(\{ n\} , \lambda) = 1, \quad Q(\lambda, \{ n \})= (1-T) \ldots (1-T^{r-1}), \quad Q(\{ 1^n \}, \lambda) = (-1)^r \frac{(1-T) \ldots (1-T^q)}{\prod_{i \le n} (T^i -1)^{r_i(\lambda)}} \] where $r_i(\lambda)$ is the number of parts of $\lambda$ which are at least $i$.
\end{prop}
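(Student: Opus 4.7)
The plan is to follow Springer's approach in \cite{S2}, realizing the Green polynomials $Q(\lambda, \mu)$ as entries in the transition matrix between two bases of the ring of symmetric functions: the power-sum basis $\{p_\mu\}$ and the Hall-Littlewood basis $\{P_\lambda(\,\cdot\,; q)\}$. Concretely one has (up to normalization)
\[ p_\mu(x) = \sum_{\lambda \vdash |\mu|} Q(\lambda, \mu)(q)\, P_\lambda(x; q), \]
and equivalently, $Q(\lambda, \mu)(q)$ equals the value of an explicit virtual character of $\GL{n}{\F{q}}$ (a Deligne-Lusztig induction from the torus $T_\mu$) on the unipotent element of type $\lambda$. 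This dual, character-theoretic description will drive the integrality statement.

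First I would verify that $Q(\lambda, \mu)$ is polynomial in $q$ with rational coefficients. The Hall-Littlewood polynomials $P_\lambda(x; q)$ have coefficients in $\Z[q]$ when expanded in the Schur (or monomial) basis, and the transition matrix to the power sums is unitriangular with entries in $\Z[q]$; its inverse is therefore also given by polynomials in $q$, a priori with rational coefficients.

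Second, I would promote rational coefficients to integer coefficients via the character-theoretic interpretation. For every prime power $q_0$, $Q(\lambda, \mu)(q_0)$ is (up to an explicit sign) a virtual character value of $\GL{n}{\F{q_0}}$, hence an algebraic integer; combined with rationality, it is an ordinary integer. A rational polynomial that takes integer values on the Zariski-dense set of prime powers must have integer coefficients, as one verifies by expressing it in the binomial-coefficient basis and using a finite-differences argument; a direct induction from the recursion underlying formula 5.18 of \cite{S2} provides a more hands-on alternative.

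Finally, the three explicit identities are standard specializations. The identity $Q(\{n\}, \lambda) = 1$ expresses that the coefficient of $P_{(n)}$ in the Hall-Littlewood expansion of $p_\lambda$ is always $1$ for $\lambda \vdash n$, reflecting the leading term structure of Hall-Littlewood polynomials. The formula $Q(\lambda, \{n\}) = (1-q)(1-q^2)\cdots(1-q^{r-1})$ is the product decomposition arising from the contributions of the simple reflections in the Coxeter element of the Weyl group of $T_\lambda$ (equivalently, the order of $T_\lambda$ relative to a split maximal torus, up to sign). The identity for $Q(\{1^n\}, \lambda)$ corresponds to evaluating the Steinberg-type class function on $T_\lambda$ and follows from a direct symmetric-function manipulation, using $P_{(1^n)}$ as the elementary symmetric function $e_n$. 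The main obstacle will be reconciling the precise normalization conventions between Green's original paper, Macdonald \cite{M}, and Springer \cite{S2}; once one commits to a single convention, the integrality step and the three specializations are classical.
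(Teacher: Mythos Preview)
The paper does not actually prove this proposition: it is stated as a list of ``facts about the Green polynomials which we will need later'' with the parenthetical ``see \cite{S2} for a proof'', and no argument is given. Your plan to extract the result from Springer's setup is therefore exactly in line with what the paper does, only more explicit.

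One small caution on your integrality step: knowing that $Q(\lambda,\mu)(q_0)\in\Z$ for every prime power $q_0$ and that the polynomial has rational coefficients yields, via finite differences, only that $Q(\lambda,\mu)$ is integer-valued on $\Z$, not that its coefficients lie in $\Z$ (think of $\binom{x}{2}$). The cleaner route, already implicit in your first paragraph, is purely algebraic: the Hall--Littlewood functions $P_\lambda(x;t)$ form a $\Z[t]$-basis of the ring of symmetric functions over $\Z[t]$ (Macdonald \cite{M}, III.2), and since each $p_\mu$ lies in that ring, the expansion coefficients $Q(\lambda,\mu)$ are automatically in $\Z[t]$. This makes the character-theoretic detour unnecessary for the integrality claim, though it remains a useful sanity check and is indeed how Springer interprets the values.
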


Following the construction above, the choice $\lambda = (n)$ gives a torus $\mathbf T_n= \Res{k_n}{k}{\mathbf G_m}$ which is a minisotropic torus for $\mathbf {GL}_n$. Its character group $\widehat {T_n} = \Hom \left( T_n, \overline{\Zl}^* \right)$ is acted upon by the Weyl group $W_n$, and we call a character $\phi \in \widehat{T_n}$ \emph{regular} if its stabilizer under this action is trivial. \\
Up to conjugation, we have that $T_n \cong k_n^*$ and $W_n \cong \Gal(k_n / k)$ so that a regular character is one which has largest possible orbit under a generator of the cyclic group $W_n$.
\begin{thm}[Cuspidal characters] \label{cuspidalchar}
Let $\phi \in \widehat {T_n}$ be a regular character. Then there exists an irreducible $l$-adic cuspidal representation of $\GL{n}{k}$ whose character $\chi_n(\phi)$ is such that
\begin{enumerate}
\item its principal part are all zero besides the one for $\lambda=(n)$, for which \begin{equation} \left( \pi_{(n)} (\chi_n(\phi)) \right) (t) = (-1)^{n-1} \sum_{w \in W_n} \phi(w.t) \quad \forall t \in T_n. \end{equation}
\item $\deg \chi_n(\phi) = \prod_{i=1}^n \left( |k|^i -1 \right)$.
\item Let $\phi_1, \phi_2 \in \widehat{T_n}$ be two regular characters. Then $\chi_n(\phi_1) = \chi_n(\phi_2)$ if and only if $\phi_1 = \left( \phi_2 \right)^{|k|^i}$ for some $i$.
\item The characters $\chi_n(\phi)$ as $\phi$ varies among the regular elements of $\widehat {T_n}$ exhaust the cuspidal part of the spectrum (that is, every irreducible $l$-adic cuspidal representation has character one of the $\chi_n(\phi)$'s).
\end{enumerate}
\end{thm}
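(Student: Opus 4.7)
The plan is to define $\chi_n(\phi)$ directly through its principal parts via the inverse isomorphism $\pi_n$ of Theorem \ref{isom}, which makes statement (1) tautological, and then deduce the remaining properties from the formal properties of $\gamma_n$ together with the orthogonality relations for the Green polynomials.

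Concretely, I would set $\chi_n(\phi) := \gamma_n(\Phi)$ where $\Phi = (\Phi_\lambda)_{\lambda \vdash n}$ has $\Phi_\lambda = 0$ for $\lambda \ne (n)$ and $\Phi_{(n)}(t) = (-1)^{n-1} \sum_{w \in W_n} \phi(w \cdot t)$. Then (1) holds by bijectivity of $\gamma_n$. For (3), regularity of $\phi$ forces the $W_n$-orbit to have full size $|W_n|$, so the symmetrization $\Phi_{(n)}$ recovers this orbit exactly; combined with the identification $W_n \cong \Gal(k_n/k)$ acting as Frobenius, the symmetrizations of $\phi_1$ and $\phi_2$ agree precisely when $\phi_1 = \phi_2^{|k|^i}$ for some $i$, and injectivity of $\gamma_n$ yields (3).

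For (2), I would evaluate $\chi_n(\phi)(1)$ using the formula in Theorem \ref{isom}: since only the minisotropic sector contributes, this reduces to $|W_n|^{-1} \sum_{t \in T_n} H(1; (n), t) \Phi_{(n)}(t)$, where $H(1;(n),t)$ is expressed in terms of Green polynomials via formula 5.18 of \cite{S2}. The explicit evaluations in Proposition \ref{greenpolys} (in particular for $Q(\{n\}, \cdot)$ and $Q(\{1^n\}, \cdot)$) simplify the sum to the claimed product. For irreducibility (implicit in (4)) I would invoke the fact, going back to Green and implicit in Springer's setup, that under $\gamma_n$ the standard inner product on $\Cl(\GL{n}{k})$ corresponds to the $L^2$-inner product on $I_n$ weighted by $|W_\lambda|^{-1}$ on each sector; regularity then gives $\|\Phi_{(n)}\|^2 = |W_n|$, hence $\langle \chi_n(\phi), \chi_n(\phi) \rangle = 1$, so $\chi_n(\phi)$ is $\pm$ an irreducible character, with the sign fixed by positivity of the degree computed above.

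Cuspidality is a direct consequence of the support of $\pi_n \chi_n(\phi)$: only $T_n$ contributes, and by Frobenius reciprocity a class function whose principal parts on non-minisotropic tori vanish is orthogonal to every character parabolically induced from a proper Levi subgroup, which is the definition of cuspidality on the character side. For the exhaustion part of (4), I would count: the number of $W_n$-orbits of regular characters on $k_n^*$ equals the number of irreducible cuspidal representations of $\GL{n}{k}$ by a classical M\"obius-inversion argument, so the injective map $\phi \mapsto \chi_n(\phi)$ is onto the cuspidal part of the spectrum. The hardest step is the orthogonality/isometry claim used in the norm computation; while classical, it requires unpacking a subtle combinatorial identity between the Green polynomials $Q(\lambda,\mu)$ and the class sizes of $\GL{n}{k}$, and is where I would expect the most care to be needed.
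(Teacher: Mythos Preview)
The paper does not actually prove this theorem; it simply records ``This consists of theorems 7.12 and 8.6 of \cite{S2}.'' Your outline is therefore more detailed than anything in the paper, and it follows essentially the same route as Green and Springer: define the class function via $\gamma_n$ on the minisotropic sector, use the isometry of $\gamma_n$ to get norm one, fix the sign by computing the degree, and read cuspidality off from the vanishing of the non-minisotropic principal parts.

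There is one genuine gap in your argument. From $\langle \chi_n(\phi), \chi_n(\phi) \rangle = 1$ you conclude that $\chi_n(\phi)$ is $\pm$ an irreducible character, but this inference is only valid once you know $\chi_n(\phi)$ lies in the $\Z$-lattice of virtual characters. A priori $\gamma_n(\Phi)$ is just a class function, and nothing in your setup (the definition via principal parts, or the isometry) forces it to be an integer combination of irreducibles. In Green's and Springer's treatments this integrality is a separate and non-trivial ingredient: it comes either from realizing $(-1)^{n-1}\chi_n(\phi)$ as an alternating sum of honest induced characters (Green's original inductive construction), or later from Deligne--Lusztig theory where $R_T^\theta$ is visibly a virtual representation. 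You should either invoke one of these, or indicate how you intend to prove integrality directly; without it, the norm-one step does not give irreducibility.

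A smaller point: your cuspidality argument (``vanishing of non-minisotropic principal parts implies orthogonality to every parabolically induced character'') is correct but rests on the compatibility of the map $\pi_n$ with Harish-Chandra restriction, i.e.\ that the principal parts of $i^G_{P_m}\tau$ are supported on tori that factor through the Levi $M_m$. This is part of Springer's machinery and worth flagging as a citation rather than leaving implicit.
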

\begin{proof} This consists of theorems 7.12 and 8.6 of \cite{S2}.
\end{proof}

We can then apply the above construction for both $k$ and $k_l$ and obtain explicitly all the cuspidal $l$-adic representations for $\GL{n}{k}$ and $\GL{n}{k_l}$; the next step is figuring out which cuspidal $l$-adic representations of the larger group are fixed under the Frobenius element $\Frob_k \in \Gal(k_l / k)$.

\section{The correspondence via Tate cohomology}

The following fact answers the question posed at the end of the previous section.
\begin{prop} Let $(n,l)=1$. Then a cuspidal $l$-adic representation $\rho$ of $\GL{n}{k_l}$ having character $\chi_n(\phi)$ for some regular $\phi \in \widehat{T_n} \cong \Hom \left( k_{ln}^*,  \overline{\Zl}^* \right)$ is Frobenius-fixed (that is, $\rho \cong \rho \circ \Frob_k$ for $\Frob_k \in \Gal \left( k_l / k \right)$) if and only if $\phi = \psi \circ \Norm{k_{ln}}{k_n}$ for some regular $\psi \in \widehat{T'_n} \cong \Hom \left( k_n^*, \overline{\Zl}^* \right)$.
\end{prop}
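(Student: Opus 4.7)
The plan is to translate the isomorphism $\rho \cong \rho \circ \Frob_k$ into an explicit equation on the parameter $\phi$ and then recognize it as the factorization through the norm.

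First I would show that the Frobenius twist $\rho \circ \Frob_k$ has character $\chi_n(\phi^{q})$, where $q = |k|$ and $\phi^q$ denotes the character $x \mapsto \phi(x^q)$ on $k_{ln}^*$. Starting from the formula of Theorem \ref{isom}, the key point is the Frobenius-equivariance $H(\Frob_k(x); \lambda, t) = H(x; \lambda, \Frob_k^{-1}(t))$ built into the Green-polynomial definition of $H$: this reflects the fact that $H$ only depends on the joint conjugacy-class data of $(x, t)$, and that $\Frob_k$ acts compatibly on $\GL{n}{k_l}$ and on each torus $T_\lambda$. A change of variable $t \mapsto \Frob_k(t)$ in the sum of Theorem \ref{isom} then replaces the principal part of $\chi_n(\phi)$ by that of $\chi_n(\phi^q)$; here I use that the Weyl action of $W_n \cong \Gal(k_{ln}/k_l)$ commutes with $\Frob_k$, so that the inner sum $\sum_w \phi(w . t)$ becomes $\sum_w \phi^q(w . t)$.

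By part (3) of Theorem \ref{cuspidalchar}, the equality $\chi_n(\phi) = \chi_n(\phi^q)$ holds precisely when $\phi^q = \phi^{q^{lj}}$ for some $j$. I would now analyze this condition combinatorially. Let $S = \{s \in \Z : \phi^{q^s} = \phi\}$; this is a subgroup of $\Z$ of the form $d \Z$ with $d \mid ln$ (it contains $ln$ and is closed under addition and negation). The condition $\phi^q = \phi^{q^{lj}}$ is equivalent to $lj - 1 \in d \Z$, which is solvable in $j$ iff $\gcd(d, l) = 1$; since $l$ is prime and $d \mid ln$, this forces $d \mid n$. Combined with $\phi^{q^d} = \phi$, this says that $\phi$ is invariant under $\Gal(k_{ln}/k_d)$, hence by Hilbert 90 for finite fields $\phi$ factors as $\phi = \psi_0 \circ \Norm{k_{ln}}{k_d}$; composing $\psi_0$ with $\Norm{k_n}{k_d}$ and using transitivity of the norm we obtain $\phi = \psi \circ \Norm{k_{ln}}{k_n}$ for some character $\psi$ of $k_n^*$.

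For the converse direction, if $\phi = \psi \circ \Norm{k_{ln}}{k_n}$ then $\phi^{q^n} = \phi$, so $\phi^{q^{lj}} = \phi^q$ whenever $lj \equiv 1 \pmod{n}$; such a $j$ exists because $\gcd(l, n) = 1$, and the first paragraph then yields $\chi_n(\phi) \circ \Frob_k = \chi_n(\phi^q) = \chi_n(\phi)$. Finally, I would verify regularity of $\psi$: if $\psi^{q^i} = \psi$ for some $1 \le i \le n-1$, then $\psi^{q^{li}} = \psi$ and hence $\phi^{q^{li}} = \psi^{q^{li}} \circ \Norm{k_{ln}}{k_n} = \phi$, contradicting regularity of $\phi$; the reverse implication works the same way, using that $\gcd(n, l) = 1$ makes multiplication by $l$ a bijection on $\Z/n\Z$. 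The main obstacle is the Frobenius-equivariance of $H$ invoked in the first paragraph; once this is in hand, the rest is a transparent computation with cyclic finite-field extensions.
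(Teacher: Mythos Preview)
Your proposal is correct and follows the same overall strategy as the paper: translate Frobenius-invariance into the condition $\phi^q = \phi^{q^{lj}}$ via Theorem~\ref{cuspidalchar}(3), show this is equivalent to factoring through the norm using $(l,n)=1$, and verify regularity. The one difference worth noting concerns your ``main obstacle'': the paper bypasses the Frobenius-equivariance of $H$ entirely by observing that $\chi_n(\phi)$ has a single nonzero principal part (at $\lambda=(n)$), so one computes $(\Frob_k.\chi_n(\phi))(t)=\chi_n(\phi)(t^q)=\chi_n(\phi^q)(t)$ directly on $T_n$ from the explicit formula in Theorem~\ref{cuspidalchar}(1), without ever touching $H$. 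For the factorization step the paper instead fixes a generator $\alpha\in k_{ln}^*$, sets $t=\phi(\alpha)$, and compares orders of roots of unity; your subgroup argument via $S=d\Z$ and Hilbert~90 is equally valid and arguably cleaner.
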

Notice that we will then have a natural choice for the cuspidal $l$-adic representation of $\GL{n}{k}$ correspondent to $\rho$: the one with character $\chi_n(\psi)$.
\begin{proof} We apply theorem \ref{cuspidalchar} to $\GL{n}{k_l}$: the only nonzero principal part of $\chi_n(\phi)$ is the one correspondent to $\lambda=(n)$, so we can assume that $\chi_n(\phi)$ takes value on $T_n \subset \GL{n}{k_l}$, with $T_n \cong {k_{ln}}^*$. Then the action of $\Frob_k$ is given by \[ \left( \Frob_k . \chi_n(\phi) \right)(t) = \chi_n(\phi)(t^{|k|}) = \chi_n \left( \phi^{|k|} \right)(t) \] by using the explicit formula in theorem \ref{cuspidalchar}. Since $\left( \Frob_k . \chi_n(\phi) \right)$ is still clearly a cuspidal character, the third statement of the same theorem tells us that $\rho$ is Frobenius-fixed if and only if \begin{equation}\label{frobfixed} \phi^{|k|} = \phi^{{|k|}^{li}} \textnormal{ for some }i. \end{equation}
It remains to show that condition \ref{frobfixed} is equivalent to $\phi$ factoring through the norm $N = \Norm{k_{ln}}{k_n}$ and to a regular character $\psi$. Let then $\alpha \in k_{ln}^*$ be a generator, and let $t = \phi(\alpha) $. \\
Using Hilbert 90, $\phi$ factors through $N$ if and only if $t \in \mu_{|k_n|-1}( \overline \Zl)$, while condition \ref{frobfixed} is equivalent to $t \in \mu_{|k| \left( |k|^{(li-1)} -1 \right)}( \overline \Zl)$ for some $i$. Since clearly $t \in \mu_{|k_{ln}|-1}( \overline \Zl)$, condition \ref{frobfixed} is actually equivalent to $t \in \mu_{ |k|^{(li-1)} -1 }( \overline \Zl)$. It's then easy to see that the last condition is equivalent to the one having $\phi$ factoring through $N$, under the assumption that $(l,n)=1$.

Assume then that $\phi = \psi \circ \Norm{k_{ln}}{k_n}$, it remains to prove that $\phi$ is regular if and only if $\psi$ is. Now $\phi$ is regular $\iff$ $t \notin \mu_{q^{lk}-1}( \overline \Zl)$ for all $1 \le k \le n-1$, while $\psi$ is regular $\iff$ $t \notin \mu_{q^k -1}( \overline \Zl)$ for all $1 \le k \le n-1$. The ``only if" implication is obvious, the other implication follows from $(l,n)=1$.
\end{proof}

We are now ready to state an intermediate important result: the following theorem is roughly saying that Shintani correspondence for cuspidal $l$-adic representations is realized by taking the Frobenius twist of the reduction $\mod l$. As discussed in the introduction, the Shintani correspondence (see \cite{Sh}) is a natural bijection between irreducible representations of $\GL{n}{k}$ and irreducible representations of $\GL{n}{k_l}$ which are $\Frob_k$-fixed, and essentially encodes the content of local cyclic base change in the unramified setting.
\begin{thm}\label{charscorresp} Let $\phi = \psi \circ \Norm{k_{ln}}{k_n}$ be a regular character of $k_{ln}^*$ which induces a Frobenius-fixed cuspidal $l$-adic representation of $\GL{n}{k_l}$. Then the reduction $\mod l$ of $\chi_n(\phi)$ (a character of $\GL{n}{k_l}$) coincides, when restricted to $\GL{n}{k}$, with the Frobenius twist of the reduction $\mod l$ of $\chi_n(\psi)$, the character of the cuspidal $l$-adic representation of $\GL{n}{k}$ induced by $\psi$. Equivalently,
\begin{equation} \overline{ \chi_n(\phi)(x) } = \overline{\chi_n(\psi)(x)}^l \qquad \forall x \in \GL{n}{k}.
\end{equation}
\end{thm}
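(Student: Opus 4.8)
The plan is to compute both cuspidal characters \emph{explicitly} via the isomorphism $\gamma_n$ of Theorem~\ref{isom} and the description of their principal parts in Theorem~\ref{cuspidalchar}, and then to compare the resulting formulas term by term; the comparison will be governed by a few elementary congruences modulo $l$. By Theorem~\ref{cuspidalchar} the only nonzero principal part of $\chi_n(\psi)$ is the one attached to $\lambda=(n)$, so inserting $\Phi=\pi_n\bigl(\chi_n(\psi)\bigr)$ into $\gamma_n$ gives, for $x\in\GL{n}{k}$,
\[ \chi_n(\psi)(x)=\frac{(-1)^{n-1}}{|W_n|}\sum_{t\in T_n}H\bigl(x;(n),t\bigr)\sum_{w\in W_n}\psi(w.t),\qquad T_n\cong k_n^{*},\quad W_n\cong\Gal(k_n/k). \]
I would then unwind Springer's definition of $H(x;(n),t)$ (formula~5.18 of \cite{S2}) --- equivalently, use that $\chi_n(\psi)$ is, up to the sign $(-1)^{n-1}$, the Deligne--Lusztig character of the minisotropic torus and invoke the Deligne--Lusztig character formula --- to see that $H(x;(n),t)=0$ unless the semisimple part $s$ of $x$ is $\GL{n}{k}$-conjugate into $T_n$, i.e.\ unless the characteristic polynomial of $x$ is a power $f^{n/d}$ of an irreducible $f\in k[T]$ of degree $d$; here $d\mid n$, hence $(d,l)=1$. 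In that case, writing $b\in k_d^{*}$ for a root of $f$ and $q=|k|$, the formula should collapse to
\[ \chi_n(\psi)(x)=(-1)^{n-1}\,c(x;q)\sum_{i=0}^{d-1}\psi\bigl(b^{q^i}\bigr), \]
the inner sum running over the size-$d$ orbit of $b$ under $\Gal(k_n/k)$, and $c(x;q)\in\Z$ being the value at $q^{d}$ of a fixed integer polynomial --- a Green function of $C_{\GL{n}{\overline k}}(s)\cong\GL{n/d}{k_d}$ evaluated at the unipotent part of $x$, cf.\ Proposition~\ref{greenpolys} --- in particular independent of $\psi$.

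Next I would run exactly the same computation for $\chi_n(\phi)$, a character of $\GL{n}{k_l}$, and restrict it to $\GL{n}{k}\subseteq\GL{n}{k_l}$. Since $(l,n)=1$, and hence $(l,d)=1$: $f$ stays irreducible over $k_l$, so the characteristic polynomial of $x$ over $k_l$ is still $f^{n/d}$, and one checks (using $(l,n)=1$ again, which forces the sizes of the $\Frob_q$-orbits occurring here --- all divisors of $n$ --- to be unchanged on passing to $\Frob_{q^l}$) that $x$ has this ``good form'' over $k$ exactly when it does over $k_l$; in particular $\chi_n(\phi)(x)=0\iff\chi_n(\psi)(x)=0$. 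For $x$ of good form, the $\Gal(k_{ln}/k_l)$-orbit of $b$ inside $k_{ln}^{*}$ coincides, as a subset of $\{b^{q^i}\}$, with its $\Gal(k_n/k)$-orbit (because $l$ generates $\Z/d$), and the Green function of $C_{\GL{n}{\overline k}}(s)\cong\GL{n/d}{k_{ld}}$ is the \emph{same} integer polynomial, now evaluated at $(q^{l})^{d}=q^{ld}$. So I expect
\[ \chi_n(\phi)(x)=(-1)^{n-1}\,c(x;q^{l})\sum_{i=0}^{d-1}\phi\bigl(b^{q^i}\bigr), \]
which is precisely the previous formula with $q$ replaced by $q^{l}$ and $\psi$ by $\phi=\psi\circ\Norm{k_{ln}}{k_n}$.

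To finish I would compare the two formulas modulo the maximal ideal of $\overline{\Zl}$ using three elementary facts. (i) Any $c(T)\in\Z[T]$ satisfies $c(q^{l})\equiv c(q)\equiv c(q)^{l}\pmod l$ (from $q^{l}\equiv q$ and Fermat). (ii) For each $i$ one has $b^{q^i}\in k_d^{*}\subseteq k_n^{*}\subseteq k_{ln}^{*}$, whence $\phi(b^{q^i})=\psi\bigl(\Norm{k_{ln}}{k_n}(b^{q^i})\bigr)=\psi(b^{q^i})^{M}$ with $M=(q^{ln}-1)/(q^{n}-1)=\sum_{j=0}^{l-1}q^{nj}\equiv l\pmod{q^{d}-1}$ (as $d\mid n$); since $\psi(b^{q^i})$ is a $(q^{d}-1)$-th root of unity, reduction gives $\overline{\phi(b^{q^i})}=\overline{\psi(b^{q^i})}^{M}=\overline{\psi(b^{q^i})}^{\,l}$. (iii) $\bigl((-1)^{n-1}\bigr)^{l}=(-1)^{n-1}$ in $\overline{\F{l}}$: obvious for $l$ odd, and if $l=2$ then $(n,l)=1$ forces $n$ odd, so the sign is $1$. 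Combining (i)--(iii) and using that $y\mapsto y^{l}$ is a ring endomorphism of $\overline{\F{l}}$, for $x$ of good form one obtains
\[ \overline{\chi_n(\phi)(x)}=(-1)^{n-1}\,\overline{c(x;q)}^{\,l}\Bigl(\sum_{i=0}^{d-1}\overline{\psi(b^{q^i})}\Bigr)^{\!l}=\Bigl((-1)^{n-1}\,\overline{c(x;q)}\sum_{i=0}^{d-1}\overline{\psi(b^{q^i})}\Bigr)^{\!l}=\overline{\chi_n(\psi)(x)}^{\,l}, \]
while for $x$ not of good form both sides vanish; this is the asserted identity.

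The step I expect to be the main obstacle is the one flagged above as ``unwinding Springer's definition'': justifying that the explicit cuspidal character formula for $\GL{n}{k_l}$ with parameter $\phi$, restricted to $\GL{n}{k}$, really is the specialization ($q\to q^{l}$, $\psi\to\phi$) of the formula for $\GL{n}{k}$ with parameter $\psi$ --- equivalently, extracting the ``collapsed'' forms above from Springer's formula~5.18 and checking that the conjugacy-class combinatorics is rigid under the base change $k\subseteq k_l$. This rigidity is exactly where $(l,n)=1$ enters: the relevant irreducible factors do not split over $k_l$, Galois-orbit sizes do not change, and the relative norm restricts to an $M$-th power on each subfield $k_d^{*}$. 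Everything after that is the three congruences (i)--(iii).
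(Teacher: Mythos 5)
Your proposal is correct and follows essentially the same route as the paper's proof: both compute $\chi_n(\phi)$ and $\chi_n(\psi)$ explicitly from Springer's character formula (Theorems \ref{isom} and \ref{cuspidalchar} together with formula 5.18 and lemma 5.19 of \cite{S2}), observe that only $t$ lying in $k_n^* \subset k_{ln}^*$ can contribute when $x \in \GL{n}{k}$, and then conclude via the elementary congruences you list --- the Green polynomial factor is the same integer polynomial specialized at $q^d$ versus $q^{ld}$ so its reductions agree up to an $l$-th power by Fermat, and $\phi=\psi^l$ on $k_n^*$ since the relative norm is the $l$-th power map there. One small slip: in the orbit comparison, ``$l$ generates $\Z/d$'' should read ``$l$ is a unit in $\Z/d\Z$'' --- what you need (and what $(l,d)=1$ provides) is that multiplication by $l$ permutes the exponents modulo $d$, not that $l$ is a cyclic generator, and the conclusion that the two Frobenius orbits of $b$ coincide is of course unaffected.
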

\begin{rem}
Implicit in the above equation is that both $\chi_n(\phi)(x) $ and $\chi_n(\psi)(x)$ are algebraic numbers that are $l$-integral (that is, lie in $\overline{\Zl}$, the ring of integers of the algebraic closure of $\Ql$), and then the bar means "reduction $\mod \mathfrak l$", where $\mathfrak l \subset \overline \Zl$ is the maximal ideal.
\end{rem}
\begin{proof} Fix $x \in \GL{n}{k}$ for which we will verify the formula. \\
Consider first $\chi_n(\phi)$: by theorem \ref{cuspidalchar} the only nonzero principal part of $\chi_n(\phi)$ is for $\lambda = \{ n \}$ so we have \[ \chi_n(\phi)(x) = \left( \gamma_n \circ \pi \right) (\chi_n(\phi)) (x) = \frac{1}{|W_n|} \sum_{t \in T_n} H_{k_l} \left( x; \{ n \} , t \right) \pi_n \left( \chi_n(\phi) \right) (t) = \] \[ = \frac{(-1)^{n-1}}{|W_n|} \sum_{t \in T_n} H_{k_l} \left( x; \{ n \} , t \right) \sum_{w \in W_n} \phi(w.t) \] where the subscript in the function $H$ remembers what field we are evaluating the corresponding Green polynomials on, and we are also fixing isomorphisms $T_n \cong k_{ln}^*$ and $W_n \cong \Gal(k_{nl} / k_l) = \langle w_0 \rangle$ where $w_0.t = t^{q^l}$ has order $n$. \\
Similarly, \[ \chi_n(\psi)(x) = \left( \gamma_n \circ \pi \right) (\chi_n(\psi)) (x) = \frac{1}{|W'_n|} \sum_{t \in T'_n} H_k \left( x; \{ n \} , t \right) \pi_n \left( \chi_n(\psi) \right) (t) = \] \[ = \frac{(-1)^{n-1}}{|W'_n|} \sum_{t \in T'_n} H_k \left( x; \{ n \} , t \right) \sum_{w \in W'_n} \psi(w.t) \] where $T'_n \cong k_n^*$ and $W'_n \cong \Gal(k_n / k) = \langle w'_0 \rangle$ with $w'_0.t = t^q$ having order $n$. 

Now lemma 5.19 in \cite{S2} tells us that
\begin{equation} H(x; \lambda, t) = \left\{ \begin{array}{cc} 0 & \textnormal{if the semisimple part of $x$ is not conjugate to } t \in T_{\lambda} \subset \GL{n}{k} \\ 1 & \textnormal{if $x$ is a regular element whose semisimple part is conjugate to } t \in T_{\lambda} \subset \GL{n}{k}.
\end{array} \right. \end{equation}
Since $x \in \GL{n}{k}$, also $x^{ss} \in \GL{n}{k}$ and the latter is $\GL{n}{k}$-conjugate to the companion matrix of the characteristic polynomial $p(T) \in k[T]$ of $x^{ss}$. If $t \in T_n \cong k_{ln}^*$ is $\GL{n}{k_l}$-conjugate to $x^{ss}$, then $t$ is in fact a root of $p(T)$ and hence belongs to $k_n^*$. This shows that the set of $t \in T_n$ such that $t$ is $\GL{n}{k_l}$-conjugate to $ x^{ss}$ are exactly the $t' \in T'_n$ such that $t'$ is $\GL{n}{k}$-conjugate to $ x^{ss}$. Since taking $l$-power is an automorphism of $\FFl$, it suffices to show that for any such fixed $t$ \[ \overline{ H_{k_l} \left( x; \{ n \} , t \right) \sum_{w \in W_n} \phi(w.t) } = \overline { H_k \left( x; \{ n \} , t \right) \sum_{w \in W'_n} \psi(w.t) }^l. \]
We consider first the function $H$. Consider the explicit formula 5.18 in \cite{S2}: \[ H(x; \{ n \}, t) = \prod_{f \in F} Q \big( \nu_x(f); \rho(\{ n \}, t; f) \big) (|k|^ {\deg f}) \] where $F$ is the set of monic irreducible polynomials over $k$ besides $f(T)=T$, and $Q\left( -, - \right)$ is a Green polynomial depending on a pair of partitions, as in proposition \ref{greenpolys}. \\
In order to unravel the last expression, we recall the definitions of the two partitions that we are computing the Green polynomials at. The partition $\nu_x(f)$ is defined in section 2 of \cite{S2} (see in particular lemma 2.3 and the preceding discussion): letting $x = x^{ss} x^u$ be the Jordan decomposition of $x$, the conjugacy class of $x$ is uniquely determined by the data of the semisimple conjugacy class of $x^{ss}$ together with a unipotent conjugacy class in the centralizer $Z_{\GL{n}{k}}(x^{ss})$. \\
Let $p(T) = \prod_i f_i(T)^{n_i}$ be the characteristic polynomial of $x^{ss}$ and denote $d_i = \deg f_i$ the degrees of the irreducible factors $f_i$, then we have an isomorphism $Z_{\GL{n}{k}}(x^{ss}) \cong \prod_i \GL{n_i}{k_{d_i}}$. In general, by the theory of Jordan blocks a unipotent conjugacy class in $\GL{m}{k}$ is uniquely determined by a partition of $m$, so that a unipotent conjugacy class in $Z_{\GL{n}{k}}(x^{ss})$ is determined by a collection of partitions: for each polynomial $f_i$ one gets a partition of $n_i$. \\
Finally, $\nu_x(f) = 0$ is the empty partition if $f$ is not an irreducible factor of $p(T)$, while if $f = f_i$, $\nu_x(f_i)$ is the partition of $n_i$ associated to $f_i$. \\
Recall now the definition of $\rho ( \lambda, t ; f)$ (see also section 2.5 of \cite{S2}) for a general partition $\lambda = (n_1, \ldots, n_r)$: here $t \in T_{\lambda} \cong \prod_i k_{n_i}^*$, so one has that $t$ corresponds to $(x_1, \ldots, x_r) \in \prod_i k_{n_i}^*$, and defines \[ r_j(t,f) = \left| \left\{ 1 \le i \le r \textnormal{ such that } n_i = j \cdot \deg f \textnormal{ and } f(x_i) = 0 \right\} \right|. \]
Then $\rho(\lambda, t ; f)$ is the partition with $r_j(t,f)$ parts equal to $j$. \\
Let's specialize the above to the case $\lambda = \{ n \}$. One obtains that \[ r_j(t,f) = \left| \{ i \le 1 \, | \, n=n_1 = j \deg f \textnormal{ and } f(t)=0 \} \right|, \] so if $f=m$ is the minimal polynomial of $t$ we have \[ r_j(t,m) = \left\{ \begin{array}{cc} 1 & \textnormal{ if } j = \frac{n}{\deg m} \\ 0 & \textnormal{ otherwise} \end{array} \right. \]
and we get that $\rho( \{ n \}, t ; m) = \left\{ \frac{n}{\deg m} \right\}$ is a partition of $\frac{n}{\deg m}$. For every other $f \neq m$, the partition $\rho(\{n\}, t ; f) = 0$ is the empty partition, and then the Green polynomial yields $Q( \nu_x(f), 0) = 1$, as in proposition \ref{greenpolys}. \\
We obtain thus \[ H_{k_l} \left( x; \{ n \} , t \right) = Q \left( \nu_x(m), 
\left\{ \frac{n}{\deg m} \right\} \right) \left( {|k_l|}^{\deg m} \right), \] and similarly, \[ H_k \left( x; \{ n \} , t \right) = Q \left( \nu_x(m), \left\{ \frac{n}{\deg m} \right\} \right) \left( {|k|}^{\deg m} \right). \]
Since $Q \left( \nu_x(m), \left\{ \frac{n}{\deg m} \right\} \right) = Q(T) \in \Z[T]$ is a polynomial with integer coefficients, it turns out that after reduction mod $l$ we obtain \[ \overline{H_{k_l} \left( x; \{ n \} , t \right)} = \overline{Q \left( |k_l|^{\deg m} \right) }= \overline{ Q \left( |k|^{l \cdot \deg m} \right)  }= \overline{Q \left( |k|^{deg m} \right)}^l = \overline{ H_k \left( x; \{ n \} , t \right) }^l. \]
It remains to compare the sums over the Weyl-conjugates. Notice that since $t \in k_n^*$, $\Norm{k_{ln}}{k_n} (t) = t^l$ and the same is true for its Weyl-conjugates which are just powers of $t$. Hence \[ \sum_{w \in W_n} \phi(w.t) = \sum_{w \in W_n} \psi(w.t)^l = \sum_{i=1}^n \psi \left( t^{|k_l|^i}\right)^l = \sum_{i=1}^n \psi \left( t^{|k|^{il}}\right)^l. \] Since $(l,n)=1$ we in fact have that the two multisets $ \{ t^{|k|^{il}} \}_{i=1}^n $ and $\{ t^{|k|^j} \}_{j=1}^n $ are equal, hence \[ \sum_{i=1}^n \psi \left( t^{|k_l|^i}\right)^l = \sum_{j=1}^n \psi \left( t^{|k|^j}\right)^l = \sum_{w' \in W'_n} \psi \left( w'.t \right)^l \] which concludes the proof.
\end{proof}

To prove that Tate cohomology realizes the Shintani correspondence on the finite groups level, it remains to show that the reduction $\mod l$ of an irreducible $l$-adic representation $\rho$ of $\GL{n}{k_l}$, considered as a modular $\GL{n}{k}$-representation, is exactly the Tate cohomology $T^0( \rho)$. This follows from the next two results:
\begin{thm}\label{Tatechar} Let $ \rho $ be an irreducible $l$-adic representation of $\GL{n}{k_l}$ which is $\Frob_k$-fixed and consider its Tate cohomology with respect to that action. The Tate cohomology of $\rho$ is naturally a $l$-modular representation of $\GL{n}{k}$ (as discussed in the introduction), and for the reduction $\mod l$ we have the following identity of traces: \[ \overline{\chi_{\rho}(g)} = \Tr(g|_{T^0}) - \Tr(g|_{T^1}) \qquad \forall g \in \GL{n}{k} \] where $\chi_{\rho}$ denotes the character of the representation $\rho$, and the bar denotes, as usual, reduction $\mod l$. For the finite order operator $g$ on the finite-dimensional $\overline \Fl$-vector space $T^i(\rho)$, we denote by $\Tr(g|_{T^i})$ the sum of the eigenvalues of $g$.
\end{thm}
\begin{proof} We reduce the question to a pure module-theoretic computation. Fix $g \in \GL{n}{k}$ for which we will prove the claim. 

Notice that every $l$-adic irreducible representation of $\GL{n}{k_l}$ is $l$-integral, because it is finite-dimensional (since $\GL{n}{k_l}$ is a finite group) and in particular an invariant lattice is obtained by averaging over $\GL{n}{k_l}$ any full-dimensional lattice. It is clear that $l$-integrality is in fact obtained over a finite extension of $\Ql$ (see, for example, \cite{V} Chapter II, section 4.12), and hence $\rho$ can be realized over a free $\Zl$-module $V$ of finite rank. \\
Setting now $G = \langle \Frob_k \rangle$ (a cyclic group of prime order $l$) gives $V$ the structure of a $\Zl[G]$ module, since $V$ is $\Frob_k$-stable. As the action of $g$ commutes with $\Frob_k$, $g$ is a $\Zl[G]$-module automorphism of finite order. \\
By finiteness of the rank and the fact that trace is additive in short exact sequences, it's enough to prove the claim for each indecomposable $\Zl[G]$-module, and following Curtis and Reiner in \cite{CR}, page 690, there are exactly 3 of them. Borchards in \cite{Borchards1}, section 2, shows that these three are $\Zl$, $\Zl[G]$, and the augmentation ideal $I \subset \Zl[G]$ which can be seen either as the kernel of the augmentation map $\Zl[G] \lra \Zl$ or as the quotient $\Zl[G] / N \Zl[G]$ with $N = 1 + \Frob_k + \ldots \Frob_k^{l-1}$ being the norm element. \\
In \cite{Borchards1} we are also given the Tate cohomology for these three indecomposable pieces: \[ \begin{array}{lll}  & T^0 \left( \Zl \right) = \Z / l \Z \qquad & T^1 \left( \Zl \right) = 0, \\ & T^0 \left( \Zl[G] \right) = 0 \qquad & T^1 \left( \Zl[G] \right) = 0, \\ & T^0 \left( I \right) = 0 \qquad & T^1 \left( I \right) = \Z / l \Z. \end{array} \]
Consider first $V= \Zl$, then obviously $g$ acts as a unit $\xi \in \Zl^*$ and its reduction $ \overline \xi$ coincides with the action of $\xi$ on $T^0(\Zl) = \Z / l \Z$. Since $T^1(\Zl)=0$, the claim of the theorem holds in this case. \\
Consider now $V = \Zl[G]$, then $g \in \GL{n}{k}$ is a finite order $\Zl[G]$-automorphism of $V$. Since $V$ is indecomposable, $g$ admits a unique eigenvalue $\lambda \in \overline {\Zl}^*$, hence its trace on $V$ is $\rk_{\Zl} (V) \cdot \lambda = l \cdot \lambda$ whose reduction $\mod l$ is zero. \\
Finally, consider $V = I = \Zl[G] / N \Zl[G]$. Again $g$ acts on $V$ as a $\Zl[G]$-automorphism of finite order, hence since $V$ is indecomposable $g$ admits a unique eigenvalue $\lambda$. Then its trace on $V$ is $\rk_{\Zl} (V) \cdot \lambda = (l-1) \lambda$ whose reduction is $ - \overline{\lambda}$. But since $T^1(V) = \Z / l \Z$ is a subquotient of $V$ it is clear that $g$ will act as multiplication by $\overline \lambda$ on it, which proves the claim.
\end{proof}

We record here for sake of clarity the result obtained on the finite group level, which follows immediately from theorems \ref{charscorresp}, \ref{Tatechar} and \ref{T1dies}.
\begin{thm} Let $l, p$ be different primes, $n$ be a positive integer coprime to both and $k$ be a finite field of characteristic $p$. Let $\pi$ be a cuspidal $l$-adic representation of $\GL{n}{k}$ and $\rho = \mathbf{bc} ( \pi )$ its Shintani base change to $\GL{n}{k_l}$. Then \[ r_l(\pi)^{(l)} \cong T^0 (\rho) \quad \textnormal{ as $l$-modular } \GL{n}{k} \textnormal{-representations} \]
\end{thm}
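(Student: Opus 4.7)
The plan is to combine Theorems \ref{charscorresp}, \ref{Tatechar}, and \ref{T1dies} into a single character identity on $\GL{n}{k}$ and then conclude by Brauer--Nesbitt.

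First I would translate both sides into the $\chi_n$-parametrization of Theorem \ref{cuspidalchar}. By the proposition preceding Theorem \ref{charscorresp}, the Shintani base change relation $\rho = \mathbf{bc}(\pi)$ means exactly that $\pi$ has cuspidal character $\chi_n(\psi)$ for some regular $\psi \in \widehat{T'_n}$, while $\rho$ has cuspidal character $\chi_n(\phi)$ with $\phi = \psi \circ \Norm{k_{ln}}{k_n}$. This puts us directly in the hypotheses of Theorem \ref{charscorresp}.

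Next, for any $g \in \GL{n}{k}$, Theorem \ref{T1dies} gives $T^1(\rho) = 0$, so Theorem \ref{Tatechar} collapses to
\[ \Tr\bigl(g \mid T^0(\rho)\bigr) = \overline{\chi_n(\phi)(g)}. \]
Theorem \ref{charscorresp} then rewrites the right-hand side as $\overline{\chi_n(\psi)(g)}^l = \overline{\chi_\pi(g)}^l$. On the other hand, unwinding the definition of the Frobenius twist (\ref{deftwist}): if $g$ acts on $r_l(\pi)$ by a matrix $[a_{ij}]$ in some $\overline{\F{l}}$-basis, then relative to the twisted scalars $g$ acts on $r_l(\pi)^{(l)}$ by the matrix $[a_{ij}^l]$, and its trace equals $\overline{\chi_\pi(g)}^l$ by Frobenius additivity in characteristic $l$. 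Hence $\Tr\bigl(g \mid T^0(\rho)\bigr) = \chi_{r_l(\pi)^{(l)}}(g)$ for every $g \in \GL{n}{k}$.

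Thus the two representations share the same character on all of $\GL{n}{k}$, and in particular on every $l$-regular class. Both are semisimple --- $T^0(\rho)$ by convention (it was defined as a semisimplification) and $r_l(\pi)^{(l)}$ because $r_l(\pi)$ is the semisimplification of a lattice reduction --- so Brauer--Nesbitt forces them to be isomorphic as $\overline{\F{l}}$-representations of $\GL{n}{k}$. The substantive content is carried entirely by the three theorems invoked; the only step requiring care is the elementary verification that the Frobenius twist raises the character to the $l$-th power, which is the one place where the hypothesis $(n,l)=1$ gets used indirectly through the identification of the $\phi$ corresponding to $\rho$.
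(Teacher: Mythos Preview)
Your proposal is correct and follows exactly the route the paper intends: the paper's own proof is the single sentence ``follows immediately from theorems \ref{charscorresp}, \ref{Tatechar} and \ref{T1dies}'', and you have accurately unpacked how those three results combine. The only step you add beyond the paper is the explicit Brauer--Nesbitt conclusion from equality of $\overline{\F{l}}$-valued traces, together with the verification that the Frobenius twist raises the trace to the $l$-th power; both are routine and correctly handled.
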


\section{Lifting to $p$-adic groups}
In this section we lift the cuspidal representations of $\GL{n}{k}$ constructed so far to level zero cuspidal representations of $\GL{n}{L}$ where $L$ is a finite extension of $\Qp$ with residue field $k$. There is a standard construction to do so, and we recall it here.

Denote then $k = \F{q}$, a finite field of characteristic $p$, and let $L/ \Qp$ be a finite extension whose residue field is $k$ (we make no assumption on the absolute ramification degree).
\begin{thm} \label{cusplift} Let $\rho$ be a cuspidal representation of $\GL{n}{k}$ over either $R = \overline{\Ql}$ or $R = \FFl$, with central character $\omega_{\rho}: k^* \lra R^*$. Using the canonical quotient map $\GL{n}{\OO_L} \lra \GL{n}{k}$, consider $\rho$ as a representation of $\GL{n}{\OO_L}$, and similarly lift $\omega_{\rho}$ to $\OO_L^*$ via the quotient map $\OO_L^* \lra k^*$. Using the isomorphism given by valuation $L^* \cong \OO_L^* \times \Z$, extend $\omega_{\rho}$ trivially on $\Z$ so that we have $\omega: L^* \lra R^*$. Then
\begin{enumerate}
\item the induced representation \[ \pmb \rho = \Ind^{\GL{n}{L}}_{L^* \GL{n}{\OO_L}} \omega \rho \] is irreducible and cuspidal;
\item moreover, $\pmb \rho$ coincides with the compactly-induced representation $\mathrm{c-ind}^{\GL{n}{L}}_{L^* \GL{n}{\OO_L}} \omega \rho$;
\item $\pmb \rho|_{\GL{n}{\OO_L}}$ contains $\rho$ with multiplicity one;
\item all cuspidal representations of level zero and minimal-maximal type of $\GL{n}{L}$ are obtained in this way.
\end{enumerate}
\end{thm}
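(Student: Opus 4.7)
The plan is to reduce all four claims to the vanishing of the intertwining of $\omega\rho$ outside the subgroup $K := L^* \GL{n}{\OO_L}$. Concretely, I want to show that for every $g \in \GL{n}{L} \setminus K$,
\[ \Hom_{K \cap g K g^{-1}} \left( \omega\rho, (\omega\rho)^g \right) = 0, \qquad (\omega\rho)^g(x) := (\omega\rho)(g^{-1} x g). \]

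This intertwining computation is the main obstacle. By the Cartan decomposition it suffices to treat $g = \diag(\varpi^{a_1}, \ldots, \varpi^{a_n})$ with $a_1 \ge \ldots \ge a_n$ not all equal, where $\varpi$ is a uniformizer of $L$. A direct matrix computation shows that $K \cap g K g^{-1}$ admits two reduction-mod-$K^1$ surjections onto the standard parabolic $P \subset \GL{n}{k}$ whose Levi blocks are the maximal consecutive runs of equal $a_i$'s: namely $\phi(x) := \bar x$ and $\psi(x) := \overline{g^{-1} x g}$, where $K^1$ is the first congruence subgroup of $\GL{n}{\OO_L}$. The two maps agree on the Levi part but differ on the unipotent radical: there is a subgroup $U_0 \subseteq K \cap g K g^{-1}$ with $\phi(U_0) = \{1\}$ while $\psi$ maps $U_0$ surjectively onto the unipotent radical $U$ of $P$. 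An intertwiner $A$ of $\omega\rho$ and $(\omega\rho)^g$ then satisfies $A = \rho(u) A$ for every $u \in U$, so its image lies in $V^U$. By cuspidality of $\rho$ the Jacquet module $V_U$ vanishes, and since $|U|$ is a power of $p$ coprime to $l$, Maschke gives $V^U = V_U = 0$, forcing $A = 0$.

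Granting the trivial intertwining, parts (1)--(3) follow from standard results on induction from compact-mod-center open subgroups. Mackey decomposition yields $\Hom_K(\rho, \cInd^{\GL{n}{L}}_K (\omega\rho)|_K) = R$, proving (3) for compact induction. The classical theorem that an irreducible representation of an open compact-mod-center subgroup with trivial intertwining has irreducible compact induction gives (1), and the resulting representation is automatically supercuspidal because $K$ is maximal compact modulo the centre; the same hypothesis forces $\Ind^{\GL{n}{L}}_K \omega\rho = \cInd^{\GL{n}{L}}_K \omega\rho$ (see e.g.\ Bushnell--Kutzko), yielding (2) and upgrading (3) to the stated form.

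For part (4), let $\pi$ be an irreducible cuspidal representation of $\GL{n}{L}$ of level zero and minimal-maximal type. By definition $\pi|_{\GL{n}{\OO_L}}$ contains an inflation of a cuspidal representation $\rho$ of $\GL{n}{k}$. Let $\omega$ be the central character of $\pi$: by level-zeroness its restriction to $\OO_L^*$ matches $\omega_\rho$, so $\omega\rho$ extends consistently to $K$. Frobenius reciprocity produces a nonzero $\GL{n}{L}$-equivariant map $\cInd^{\GL{n}{L}}_K \omega\rho \to \pi$, and irreducibility of the source from (1) forces it to be an isomorphism, exhibiting $\pi$ in the desired form.
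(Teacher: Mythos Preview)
The paper does not prove this theorem at all: its entire proof is the one-line citation ``This is theorem 3.3 in chapter III of \cite{V}.'' Your write-up therefore supplies considerably more than the paper does, and what you supply is essentially the standard argument that one finds in Vign\'eras (and, in the $\GL{2}$ case, in Bushnell--Henniart's book): reduce to Cartan representatives, exhibit a subgroup of $K\cap gKg^{-1}$ that reduces trivially on one side and onto a unipotent radical on the other, and kill the intertwiner using cuspidality of $\rho$ together with $|U|\in R^*$. That computation is correct as you have written it.

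Two small points worth tightening. First, the images of your two reduction maps $\phi,\psi$ actually land in \emph{opposite} parabolics (one upper-block, one lower-block); this does not affect the argument, since all you need is the subgroup $U_0$ on which one map is trivial and the other surjects onto a nontrivial unipotent radical, and your construction of $U_0$ is fine. Second, the sentence ``automatically supercuspidal because $K$ is maximal compact modulo the centre'' is not quite the reason: maximality of $K$ alone does not force cuspidality. The cleaner justification is that the intertwining vanishing you already proved shows (via Frobenius reciprocity and the Iwasawa decomposition, or equivalently via the fact that matrix coefficients of $\cInd_K^G\omega\rho$ are compactly supported modulo centre) that no Jacquet module of $\pmb\rho$ can be nonzero; alternatively one may simply cite Vign\'eras at this step, which is what the paper does globally. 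Your argument for part (4) is correct; note only that the extension $\omega$ you produce is the central character of $\pi$ and need not be the specific ``trivial on $\Z$'' extension fixed in the statement, but the paper's own remark immediately following the theorem acknowledges exactly this freedom.
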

\begin{proof} This is theorem 3.3 in chapter III of \cite{V}.
\end{proof}
From now on, we will call the representation $\pmb \rho$ (obtained as in the theorem) the \emph{lift} of $\rho$.
\begin{rem} The extension of $\omega_{\rho}$ over $\Z$ to a character $\omega$ of $L^*$ is arbitrary, that is, any choice of an isomorphism $L^* \cong \OO_L^* \times \Z$ - which corresponds to a choice of a uniformizer in $L^*$ that splits the obvious short exact sequence -  is allowed and give another irreducible cuspidal representation of $\GL{n}{L}$. It is possible to describe when two different choices give rise to isomorphic representations, but it is not of interest now (see again theorem 3.3 in chapter III of \cite{V}).
\end{rem}

We can then apply theorem \ref{cusplift} to our current situation. Let $F \supset E$ be a degree $l$ unramified extension of local fields of characteristic 0 and residue characteristic $p$. Fix a common uniformizer $\varpi$ which pins down isomorphisms $F^* \cong \OO_F^* \times \Z$ and $E^* \cong \OO_E^* \times \Z$. \\
Let $\pi$ be an integral $l$-adic cuspidal representation of $\GL{n}{k}$ and $\rho$ the correspondent Frobenius-fixed cuspidal representation of $\GL{n}{k_l}$. We know that the Frobenius twist of the reduction mod $l$ $r_l (\pi)$ is isomorphic to $ \tau = T^0(\rho)$, as modular $\GL{n}{k}$-representations. \\
We lift all four representations through the process described in theorem \ref{cusplift}, using the isomorphisms provided by the fixed choice of $\varpi$, obtaining representation of $p$-adic reductive groups. We denote the new representations by using the same letters in a bold font: respectively $\pmb \pi$ being an integral $l$-adic representation of $\GL{n}{E}$ lifting $\pi$ and $ \pmb{ r_l (\pi) }$ being the lift of its reduction; $\pmb \rho$ is the corresponding integral $l$-adic representation of $\GL{n}{F}$ and the lift of the modular representation realized via Tate cohomology is denoted by $\pmb \tau$. Thanks to the common choice of uniformizers, $\pmb \pi$ and $\pmb \rho$ have the same central character restricted to $E^*$, and the same holds for the corresponding modular representations.

First of all, we want to check that the lifting procedures commute with the correspondences that we had on the level of finite groups, that is
\begin{prop}\label{compat} Assume the notation above.
\begin{enumerate}
\item The lift of the reduction $\pmb{r_l (\pi)}$ is isomorphic to the reduction of the lift $r_l ( {\pmb \pi})$.
\item The lift of a Frobenius twist is the Frobenius twist of the lift.
\item For the Frobenius-fixed $l$-adic representation $\rho$, lifting commutes with taking the Tate cohomology $T^0$, that is $\pmb \tau \cong T^0(\pmb \rho)$, where the Tate cohomology $T^0$ of a cuspidal representation of $p$-adic reductive groups is appropriately defined as in \cite{TV}, section 3.
\end{enumerate}
\end{prop}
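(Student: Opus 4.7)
Parts (1) and (2) are essentially formal, and the plan is to handle them together. Recall that the lift is defined by $\pmb\pi = \cInd_{E^* \GL{n}{\OO_E}}^{\GL{n}{E}} \omega_\pi \pi$ and analogously for the other representations. For (1), I would pick a $\GL{n}{\OO_E}$-invariant $\overline\Zl$-lattice inside the inducing representation and use that compact induction from an open subgroup is exact and commutes with the reduction-mod-$l$ functor; this yields $r_l(\pmb\pi) = \cInd_{E^* \GL{n}{\OO_E}}^{\GL{n}{E}} r_l(\omega_\pi \pi)$, which is by construction the lift $\pmb{r_l(\pi)}$. For (2), the Frobenius twist is nothing but scalar base change along $\Frob_l : \overline\Fl \to \overline\Fl$; compact induction commutes with any such base change, since it leaves the underlying set of functions untouched and only re-labels the scalar action.

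Part (3) is the substantive one. The plan is to invoke the compact-induction compatibility for Tate cohomology developed in section 3 of \cite{TV}: if $H \subset G$ is a $\Gal(F/E)$-stable open subgroup and $V$ is an integral, $\Gal(F/E)$-equivariant smooth representation of $H$, then
\[ T^0\bigl( \cInd_H^G V \bigr) \;\cong\; \cInd_{H^{\Gal(F/E)}}^{G^{\Gal(F/E)}} T^0(V). \]
In our setting $G = \GL{n}{F}$ has fixed subgroup $\GL{n}{E}$ under the entry-wise Galois action, while $H = F^* \GL{n}{\OO_F}$ is Galois-stable with fixed subgroup $E^* \GL{n}{\OO_E}$; this last identification uses crucially that $F/E$ is unramified, so that a uniformizer of $E$ remains a uniformizer of $F$ and the two valuation splittings $L^* \cong \OO_L^* \times \Z$ (for $L = E, F$) are compatible and $\Gal(F/E)$-equivariant. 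Applied to $V = \omega_\rho \rho$, the formula above gives
\[ T^0( \pmb\rho ) \cong \cInd_{E^* \GL{n}{\OO_E}}^{\GL{n}{E}} T^0(\omega_\rho \rho), \]
so it remains to identify $T^0(\omega_\rho \rho)$ with the inducing datum $\omega_\tau \tau$ for $\pmb\tau$. The $\GL{n}{k_E}$-part is immediate: the operator $\mathrm T$ computing $T^0$ on the $F^* \GL{n}{\OO_F}$-representation is the very same one that computes $T^0(\rho) = \tau$, so the $\GL{n}{\OO_E}$-action factors through $\GL{n}{k_E}$ and gives $\tau$ by definition. The $E^*$-part follows because the Tate cohomology of a one-dimensional module is just its reduction $\mod l$: the resulting central character is $r_l(\omega_\rho)|_{E^*}$, and compatibility of the valuation splittings matches this with the trivial-on-valuation extension of $\omega_\tau = r_l(\omega_\rho)|_{k_E^*}$.

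The main obstacle is the first input: verifying that the Treumann--Venkatesh commutation of $T^0$ with compact induction (stated in \cite{TV} for smooth representations with complex or $\overline\Ql$-coefficients) extends cleanly to the $\overline\Zl$-integral setting relevant here, so that we may apply it to the integral lattice inside $\omega_\rho \rho$ before passing to Tate cohomology. A secondary subtle point is ensuring that no spurious Frobenius twist sneaks into the central-character matching — this is what the unramifiedness hypothesis really buys us, because there we can literally choose a common uniformizer, making the Galois action on $F^*/\OO_F^*$ trivial and the two valuation splittings strictly compatible; in the ramified case the argument would have to be redone from scratch, which is presumably why this proposition is confined to the unramified setting.
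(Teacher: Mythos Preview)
Your approach to (1) and (2) is correct and in the same spirit as the paper's, if somewhat slicker: the paper argues (1) by showing both sides are irreducible and then exhibiting an explicit inclusion of lattices, and (2) by writing down the map $\Delta$ explicitly, but your functorial phrasing (compact induction commutes with reduction and with scalar base change) is equivalent.

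For (3) your plan is also the paper's plan---both reduce to the Treumann--Venkatesh compatibility of $T^0$ with compactly supported sections---but you have misidentified the actual obstacle. The formula
\[
T^0\bigl(\cInd_H^G V\bigr)\;\cong\;\cInd_{H^\sigma}^{G^\sigma} T^0(V)
\]
does \emph{not} follow automatically from the sheaf-theoretic statement in \cite{TV}: what \cite{TV} gives is $T^0(\Gamma_c(X,\mathcal F))\cong\Gamma_c(X^\sigma,T^0(\mathcal F))$ for $X=H\backslash G$, and to turn the right-hand side into a compact induction from $H^\sigma$ to $G^\sigma$ one must know that every $\sigma$-fixed coset in $H\backslash G$ has a $\sigma$-fixed representative, i.e.\ that $(H\backslash G)^\sigma = H^\sigma\backslash G^\sigma$. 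This is a genuine cohomological condition---vanishing of the relevant piece of $H^1(\langle\sigma\rangle, F^*\GL{n}{\OO_F})$---and the paper devotes a separate lemma to it, using the unramifiedness to split off $\varpi^{\Z}$, then filtering $\GL{n}{\OO_F}$ by the pro-$p$ first congruence subgroup (whose $\Z/l$-cohomology vanishes since $p\neq l$) and finally invoking the Hilbert~90 generalization for $\GL{n}{k_l}$. Your remark that $H^\sigma=E^*\GL{n}{\OO_E}$ is correct but insufficient: it identifies the stabilizers, not the fixed cosets. By contrast, the ``integral coefficients'' issue you flag as the main obstacle is not one the paper treats as problematic at all.
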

\begin{proof}
\begin{enumerate}
\item %First of all, the reduction $r_l(\pi)$ is irreducible as a representation of $\GL{n}{\OO_E}$, by section 3.1 in chapter 3 of \cite{V}. Hence, by theorem \ref{cusplift}, its lift $\pmb{r_l(\pi)}$ is also irreducible. Vice versa, consider $\pmb{\pi}$, which is irreducible and cuspidal by theorem \ref{cusplift} and clearly $l$-integral by construction. Thus theorem 1.1(d) in chapter 3 of \cite{V} implies that its reduction $r_l \left( \pmb \pi \right)$ is also irreducible. \\
%Recall that the reduction $r_l$ of an integral $l$-adic representation $( \gamma, M)$ is the semi-simplification of the representation induced on the quotient $L / \mathfrak l L$ (where $L$ is an invariant lattice in $M$ and we denote by $\mathfrak l$ the maximal ideal of $\overline \Zl$). In particular, when the semisimplification $V^{ss}$ is an irreducible $\FFl$-representation, then it actually coincides with $V = L / \mathfrak l L$. \\
%Therefore, it suffices to show that one of the two lattice quotients embeds into the other. We can pick a $\GL{n}{k}$-invariant lattice $L$ for $\pi$, and then the lift \[ \Ind^{\GL{n}{E}}_{E^* \GL{n}{\OO_E}} L = \pmb L \] is a $\GL{n}{E}$-invariant lattice for $\pmb \pi$, and hence the reduction $r_l \left( \pmb \pi \right)$ has underlying vector space $\pmb L / \mathfrak l \pmb L$. This clearly contains $\pmb{  L / \mathfrak l L }$ which is the lift of the underlying space of $r_l(\pi)$, and the inclusion is obviously $\GL{n}{E}$-equivariant. Hence $\pmb{r_l(\pi)} \hookrightarrow r_l (\pmb \pi) $, proving the claim.
Denote $M$ the underlying space of $\pi$, and fix a $\GL{n}{k}$-stable lattice $L \subset M$. Then $L$ is $\GL{n}{\OO(E)}$-stable under the canonical surjection $\GL{n}{\OO(E)} \twoheadrightarrow \GL{n}{k}$, and also $E^*$-invariant, where $E^* \cong \OO(E)^* \times \varpi^{\Z}$ has the second factor acting trivially as explained above.

We obtain then that  \[ \mathcal L = \Ind^{\GL{n}{E}}_{E^* \GL{n}{\OO(E)}} L = \left\{ \GL{n}{E} \stackrel{f}{\lra} L \textnormal{ compactly supported and } f(zkg) = \omega_{\pi}(z) \pi(k). f(g) \right\} \] is a $\GL{n}{E}$-stable lattice inside \[ \Ind^{\GL{n}{E}}_{E^* \GL{n}{\OO(E)}} M = \left\{ \GL{n}{E} \stackrel{f}{\lra} M \textnormal{ compactly supported and } f(zkg) = \omega_{\pi}(z) \pi(k). f(g) \right\}. \]
Since we have a stable lattice for the lift $\pmb \pi$, which is irreducible and cuspidal by theorem \ref{cusplift}, theorem 1.1d in chapter 3 of \cite{V} guarantees that the reduction $\mod l$ $r_l(\pmb \pi)$ is irreducible. In particular its underlying space is precisely $\mathcal L / \mathfrak m \mathcal L$ where we denote by $\mathfrak m$ the maximal ideal of $\overline \Zl$.

On the other hand, again by theorem 1.1d in chapter 3 of \cite{V}, the representation $\pi$ has irreducible reduction $\mod l$, so that applying theorem \ref{cusplift} to the $l$-modular representation $r_l(\pi)$ yields the irreducible, cuspidal $\GL{n}{E}$-representation $\pmb{r_l(\pi)}$. Notice that by irreducibility the underlying space of $r_l(\pi)$ is $L / \mathfrak m L$, and hence the underlying space of the lift $\pmb{r_l(\pi)}$ is \[ \Ind^{\GL{n}{E}}_{E^* \GL{n}{\OO(E)}} r_l(\pi) = \left\{ \GL{n}{E} \stackrel{f}{\lra} L / \mathfrak m L \textnormal{ compactly supp. and } f(zkg) = \omega_{r_l(\pi)}(z) r_l(\pi)(k). f(g) \right\}. \]

Since both $r_l(\pmb \pi)$ and $\pmb{r_l(\pi)}$ are irreducible, to prove that they are isomorphic it suffices to give a $\GL{n}{E}$-equivariant nonzero map \[ \Delta: r_l(\pmb \pi) \lra \pmb{r_l(\pi)}. \]
Take $f \in \mathcal L = \left\{ \GL{n}{E} \stackrel{f}{\lra} L \textnormal{ compactly supported and } f(zkg) = \omega_{\pi}(z) \pi(k). f(g) \right\}$, then we define $\Delta(f)$ to be the composition \[ \GL{n}{E} \stackrel{f}{ \lra} L \twoheadrightarrow L / \mathfrak m L. \]
Indeed, it is immediate that this is well-defined on $\mathcal L / \mathfrak m \mathcal L$, since \[ \mathfrak m \mathcal L = \left\{ \GL{n}{E} \stackrel{f}{\lra} \mathfrak m L \textnormal{ compactly supported and } f(zkg) = \omega_{\pi}(z) \pi(k). f(g) \right\} , \] and notice that under the quotient $L \twoheadrightarrow L / \mathfrak m L$, each $\omega_{\pi}(z) \pi(k) \in \mathrm{GL}(L) \twoheadrightarrow \mathrm {GL} \left( L / \mathfrak m L \right) $ maps to $\omega_{r_l(\pi)}(z) r_l(\pi)(k)$. Finally, the fact that the $\GL{n}{E}$-actions are compatible is clear by staring at the explicit function models.

\item Recall that the Frobenius twist of any modular $l$-representation $(\phi, V)$ can be defined as \[ \left( \phi, V \otimes_{ \left( \FFl , \Frob_l \right) } \FFl  \right) \textnormal{ with the action } \phi(g). (v \otimes x) = \phi(g).v \otimes x \] where the $\FFl$-action on the copy of $\FFl$ is via Frobenius, that is \[ v \otimes x^ly = xv \otimes y \quad \forall v \in V, \, \forall x,y \in \FFl. \] Thus, to prove the isomorphism stated, it suffices to show that \[ \pmb \alpha \otimes_{\left( \FFl , \Frob_l \right)} \FFl \cong \pmb{ \alpha \otimes_{\left( \FFl , \Frob_l \right)}  \FFl} \] for any $l$-modular representation $( \alpha, V)$ of $\GL{n}{k}$. \\
The underlying space of the left hand side is \[ \pmb V \otimes_{\left( \FFl , \Frob_l \right)} \FFl = \{ f: \GL{n}{E} \lra V \textnormal{ s. t. } f(hg) = \alpha(h).f(g) \, \forall h, \, \forall g \in \GL{n}{E} \} \otimes_{\left( \FFl , \Frob_l \right)} \FFl \] while the underlying space of the right hand side is \[ \pmb { V \otimes_{\left( \FFl , \Frob_l \right)} \FFl } = \{ f: \GL{n}{E} \lra V \otimes_{\left( \FFl , \Frob_l \right)} \FFl \textnormal{ s. t. } f(hg) = \alpha(h).f(g) \, \forall h, \, \forall g \in \GL{n}{E} \}. \] The map \[ \Delta: \pmb V \otimes_{\left( \FFl , \Frob_l \right)} \FFl \lra \pmb { V \otimes_{\left( \FFl , \Frob_l \right)} \FFl } \textnormal{ where } \Delta \left( f \otimes \lambda \right) (g) = f(g) \otimes \lambda \] is well-defined on the tensor product, and clearly respects the $\GL{n}{E}$-action, since this action is trivial on the copy of $\FFl$. $\Delta$ is obviously an isomorphism, hence the proof is complete.

\item We will use Bernstein and Zelevinsky' characterization of induced representations as $l$-sheaves (see \cite{BZ2}), together with the proposition in section 3.3 of \cite{TV} which we now recall without proof.
\begin{prop}\label{TVprop} Let $X$ be an $l$-space and $\mathcal F$ be an $l$-sheaf on $X$. Suppose $\sigma$ is a finite-order automorphism of $X$ and that $\mathcal F$ is $\sigma$-equivariant, so that one can define Tate cohomology as \[ T^0(\mathcal F|_{X^{\sigma}}) : = \ker(1- \sigma) / \im N \] which is an $l$-sheaf on $X^{\sigma}$.
We then have a canonical isomorphism for compactly supported sections: \[ T^0 \left( \Gamma_c(X, \mathcal F) \right) \stackrel{\cong}{\lra} \Gamma_c \left( X^{\sigma} , T^0(\mathcal F) \right) \] where the map is given on a compactly supported section of $\mathcal F$ on $X$ by restricting it to $X^{\sigma}$.
\end{prop}
\begin{lem} Denote now $G= {\GL{n}{F}}$ and $H = F^* \GL{n}{\OO_F}$, a closed subgroup of $G$. Let $\sigma = \Frob_E$ act on $G$, clearly with order $l$. Then \[ \left( H \backslash G \right)^{\sigma} \cong H^{\sigma} \backslash G^{\sigma} \] canonically.
\end{lem}
\begin{proof} This is a cohomology computation,: we need to show that \[ \ker \left( H^1( \sigma, F^* \GL{n}{\OO_F}) \lra H^1( \sigma, \GL{n}{F} \right) = 1. \] In fact, we will show that the cohomology group on the left is trivial. Since $F / E$ is unramified, we can pick a common uniformizer $\omega$, and then $F^* \cong \omega^{\Z} \times \OO_F^* $ so that $F^* \GL{n}{\OO_F} = \omega^{\Z} \GL{n}{\OO_F}$.
The long exact sequence in group cohomology gives \[ \ldots \lra H^1(\sigma, \omega^{\Z}) \lra H^1(\sigma, F^* \GL{n}{\OO_F}) \lra H^1 (\sigma, \GL{n}{\OO_F}) \lra \ldots \] but the Frobenius action on $\omega^{\Z}$ is trivial, hence $H^1(\sigma, \omega^{\Z} ) \cong \Hom \left( \Z / l \Z , \Z \right) =0 $. Thus, it suffices to show that $H^1 (\sigma, \GL{n}{\OO_F})=0$. \\
Consider now the first congruence subgroup $\Gamma = \id_n + \omega \mathrm{Mat}_n(\OO_F) \subset \GL{n}{\OO_F}$. This is a Frobenius-invariant, normal pro-$p$ group and the quotient is $\GL{n}{\OO_F} / \Gamma \cong \GL{n}{k_l}$.
The long exact sequence in cohomology thus gives \[ \ldots \lra H^1(\sigma, \Gamma) \lra H^1(\sigma, \GL{n}{\OO_F}) \lra H^1 (\sigma, \GL{n}{k_l}) \lra \ldots \] and since $(p,l)=1$, the $\langle \sigma \rangle$-cohomology of the pro-$p$ group $\Gamma$ is trivial. It remains to show that $H^1(\sigma, \GL{n}{k_l}) = 0 $, but this is a well-known generalization of Hilbert 90.
\end{proof}
Consider now the $l$-adic representation $\pmb \rho = \cInd^G_H \omega_{\rho} \rho$: by proposition 2.23 in \cite{BZ2} there exists a unique (up to isomorphism) $l$-sheaf $\mathcal F$ on $X = H \backslash G$ such that $\pmb \rho$ is isomorphic to the space of compactly supported sections $\Gamma_c(X, \mathcal F)$ as modules over the algebra of $C_c^{\infty}(X)$ of compactly supported, locally constant functions on the $l$-space $X$ (and by equivalence of categories, they are isomorphic as admissible $G$-representations).
\begin{lem} The compactly supported sections of $\mathcal F$ on an open set $U \subset X$ are given by \[ \Gamma_c(U, \mathcal F) = \{ f \in \cInd^G_H W \textnormal{ such that } \supp f \subset HU \}.  \]
\end{lem}
\begin{proof} We follow the proof of proposition 1.14 in \cite{BZ2}. Notice that since $H \subset G$ is open, $X = H \backslash G$ has the discrete topology and hence functions in $ C_c^{\infty}(X)$ and sections in $ \Gamma(X, \mathcal F)$ are compactly supported if and only if are finitely supported modulo $H$. \\
In particular, a compactly supported section of $\mathcal F$ on an open set $U \subset X$ is a section with finite support contained in $U$; therefore the claim of the lemma follows from the special case $U = Hg = x \in X$, i.e. we want to show that the stalk $\mathcal F_x$ is given by elements $f \in \cInd^G_H W$ supported at $Hg$. \\
The explicit construction of proposition 1.14 in \cite{BZ2} gives that for the $C_c^{\infty}(X)$-module $M = \cInd^G_H W$, the associated sheaf $\mathcal F$ has stalk \[ \mathcal F_x = M / M(x) \textnormal{ where } M(x) = \left\{ f \in \cInd^G_H W \, | \, f(g) =0 \right\} \]
Thanks to the topology being discrete, it is immediate that $M \twoheadrightarrow M /M(x)$ has a canonical section whose image consists of the elements $f \in \cInd^G_H W$ supported at $Hg$, and this concludes the proof.
\end{proof}
%It is easy to see (for example, by going through the proof of proposition 1.14 in \cite{BZ2}) that the compactly supported sections of $\mathcal F$ on an open set $U \subset H \backslash G$ are given by \[ \Gamma_c(U, \mathcal F) = \{ f \in \cInd^G_H W \textnormal{ such that } \supp f \subset HU \}.  \]
Since we assume that $\pmb \rho$ is Frobenius-fixed, we get an induced action by $\sigma = \Frob$ on the sheaf $\mathcal F$, and $\mathcal F$ is $\sigma$-equivariant. Explicitly, for an open set $U \subset H \backslash G$, an element $f \in \mathcal F(U)$ and any $g \in G$ the action is \[ \sigma(U): \mathcal F(U) \lra \mathcal F(\sigma^{-1}(U)), \quad \left( \sigma f \right) (g) = \sigma^{-1} \left( f(\Frob(g)) \right) \] where the $\sigma$-action on the right-hand side consists of the $\sigma$-action on the underlying space $W$ of $\omega_{\rho} \rho$. \\
We are then able to explicitly determine $\mathcal F^\sigma$: letting $U \subset \left( H \backslash G \right)^{\sigma}$ open, we have \[ \ker(1 - \sigma)(U) = \{ f \in \mathcal F(U) \textnormal{ such that } f(\Frob(g)) = \sigma(f(g)) \, \forall g \in U \}; \] but since $U$ is $\sigma$-fixed, the latter condition corresponds to $f(g) \in W^{\sigma}$ for all $g$, which means $\im f \subset W^{\sigma}$. Similarly, $N = \sum_{i=1}^l \sigma$ acting on $\mathcal F(U)$ gives \[ \left( N.f \right)(g) = N (f(g)), \] hence \[ \left( \im N \right)(U) = \{ (N.f) \textnormal{ such that } f \in \mathcal F(U) \} \] where now $(N.f)(g) = N(f(g)) \in \im N \subset W^{\sigma}$. \\
Notice that for any fixed open set $U \subset H^{\sigma} \backslash G^{\sigma}$, we can find a function $f \in \ker \left( 1 - \sigma \right) (U)$ which hits any specific element of $W^{\sigma}$ since those functions are locally constant and compactly supported. Hence by the lemma \[ \Gamma_c \left( X^{\sigma} , T^0(\mathcal F) \right) = \{ f \in \Ind^{G^{\sigma}}_{H^{\sigma}} T^0(W) \textnormal{ compactly supported} \} = \cInd^{\GL{n}{E}}_{E^* \GL{n}{\OO_E}} T^0(W) \] which concludes the proof of proposition \ref{compat}, (3) by applying the isomorphism of proposition \ref{TVprop}.
\end{enumerate}
\end{proof}

The remaining question is, what does the Shintani correspondence on the finite groups level become, when lifted through theorem \ref{cusplift}?
\begin{thm} \label{Shintaniunram} The lifting of the Shintani correspondence to the $p$-adic groups level realizes base change (defined as in \cite{AC}, chapter 1, section 6).
\end{thm}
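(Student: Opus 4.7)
The plan is to use the explicit Local Langlands Correspondence for level zero minimal-maximal type cuspidal representations, as worked out by Bushnell-Henniart in \cite{BH1} and \cite{BH2}, together with a Mackey-style computation on the Galois side exploiting the fact that $(n,l)=1$.

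First I would recall that for a cuspidal representation of $\GL{n}{k}$ attached (as in Theorem \ref{cuspidalchar}) to a regular character $\psi \in \widehat{T_n} \cong \Hom(k_n^*, \overline{\Zl}^*)$, its lift $\pmb\pi$ to $\GL{n}{E}$ (constructed via Theorem \ref{cusplift}) corresponds under $\mathcal{L}_E$ to an induced representation $\Ind_{W(E_n)}^{W(E)} \widetilde{\psi}$, where $\widetilde{\psi}$ is a tame character of $W(E_n)$ whose restriction to $\OO_{E_n}^*$ (viewed inside $W(E_n)^{\mathrm{ab}}$ via class field theory) gives back $\psi$ after reduction modulo the maximal ideal, and whose value on a Frobenius lift encodes the extension of the central character to $E^*$ made in the lifting procedure. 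Similarly, $\pmb\rho$, attached to the regular character $\phi = \psi \circ \Norm{k_{F,n}}{k_{E,n}}$ on $k_{F,n}^* \cong T_n \subset \GL{n}{k_F}$, corresponds under $\mathcal{L}_F$ to $\Ind_{W(F_n)}^{W(F)} \widetilde{\phi}$.

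Next I would compute $\mathrm{res}^{W(E)}_{W(F)} \Ind_{W(E_n)}^{W(E)} \widetilde{\psi}$ via Mackey. Since $F = E_l$ is the unramified degree $l$ extension of $E$ and $(n,l) = 1$, the extensions $E_n$ and $F$ are linearly disjoint over $E$, their compositum is $F_n = E_{nl}$, and there is a single double coset $W(E_n) \backslash W(E) / W(F)$ with intersection $W(E_n) \cap W(F) = W(F_n)$. Thus Mackey gives
\[
\mathrm{res}^{W(E)}_{W(F)} \Ind_{W(E_n)}^{W(E)} \widetilde{\psi} \;\cong\; \Ind_{W(F_n)}^{W(F)}\bigl(\widetilde{\psi}|_{W(F_n)}\bigr).
\]
By local class field theory, the restriction $\widetilde{\psi}|_{W(F_n)}$ corresponds to the character $\widetilde{\psi} \circ \Norm{F_n}{E_n}$ on $F_n^*$; reducing modulo the maximal ideal (and using that $F_n/E_n$ is unramified so the local norm reduces to the residue-field norm) yields precisely $\psi \circ \Norm{k_{F,n}}{k_{E,n}} = \phi$ on the units.

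The final step is to verify that the lift of this character to the full Weil group $W(F_n)$ matches $\widetilde{\phi}$, i.e.\ that the value on a chosen Frobenius element is the same one used in the construction of $\pmb\rho$. This is a bookkeeping check: both $\pmb\pi$ and its Shintani partner $\pmb\rho$ are constructed by extending the central character trivially across a uniformizer of $E$ (resp.\ $F$), and since $E$ and $F$ share a uniformizer in the unramified case, the two normalizations align. Having matched both the restriction-to-units and the Frobenius data on $W(F_n)$, the two $n$-dimensional representations of $W(F)$ coincide, which is exactly the defining condition $\mathcal{L}_F(\pmb\rho) \cong \mathrm{res}^{W(E)}_{W(F)} \mathcal{L}_E(\pmb\pi)$ for base change. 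The main obstacle is the last verification: one must be careful that the tame Langlands parametrization of Bushnell-Henniart, which involves a rectifier/sign character in the essentially tame case, is compatible with our explicit choice of extending $\omega_\rho$ across $\Z$ in Theorem \ref{cusplift}; this is where the coprimality $(n,l)=1$ and the unramifiedness of $F/E$ are used in an essential way.
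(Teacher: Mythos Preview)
Your proposal is correct and follows essentially the same route as the paper: explicit Local Langlands via Bushnell--Henniart, a Mackey computation on the Galois side using $(n,l)=1$, and the class field theory diagram identifying $\widetilde\psi|_{W(F_n)}$ with $\widetilde\psi\circ\Norm{F_n}{E_n}$. The ``main obstacle'' you flag is exactly what the paper resolves in Lemma~\ref{LLC}: the rectifier is the unramified character $\mu$ with $\mu(\omega)=(-1)^{n-1}$, and the final check becomes $(-1)^{l(n-1)}=(-1)^{n-1}$, which holds precisely because $(l,n)=1$ forbids $l$ and $n$ both being even.
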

\begin{proof}[Proof of theorem \ref{Shintaniunram}]
We will use an explicit description of the local Langlands correspondence to prove this statement. \\
One of the features that characterize uniqueness of the local Langlands correspondence is that base change on the automorphic side corresponds to restriction on the Galois side, that is: if $ \phi$ is an irreducible admissibile representation of $\GL{n}{E}$ corresponding to the $n$-dimensional representation $\alpha$ of the Weil group $W(E)$, and $F/E$ is a cyclic extension of local fields, then the restriction $\res^{W(E)}_{W(F)} \alpha$ corresponds via Local Langlands to the base change $\mathbf{bc} (\phi)$, an irreducible admissibile representation of $\GL{n}{F}$. We remark again that this description, which is classical for representation over a characteristic zero coefficient field, still holds true for modular representations (at least in characteristic different from the residue characteristic of $E$) thanks to Vigneras (\cite{Vig}).

Therefore, it is enough to find the Galois representations corresponding to $\pmb \pi$ and $\pmb \rho$ and check that they are the restriction of one another. We follow here the explicit description by Bushnell and Henniart (see \cite{BH1} and \cite{BH2}): the information that we need is encoded in the following lemma. We keep the notation introduced at the start of this section: $k = \F{q}$ is a finite field of characteristic $p$ and $L / \Qp$ is a finite extension having residue field $k$.
\begin{adjustwidth}{1cm}{0pt}
\begin{lem}\label{LLC} Suppose $(n,|k|)=1$. Let $\alpha$ be an $l$-adic cuspidal representation of $\GL{n}{k}$ which corresponds via Green correspondence to a regular character $\chi: k_n^* \lra \overline \Ql^*$. Inflate $\chi$ to $\OO_{L_n}^*$ and by extending it trivially on $\Z$ get a character (still denoted by $\chi$) of $L_n^*$. Consider the unramified character $\mu: \L_n^* \lra \overline \Ql^*$ defined as $\mu(\omega) = (-1)^{n-1}$ on a (equivalently, any) uniformizer $\omega$ of the unramified extension $L_n$ of degree $n$ of $L$. Then the ($l$-adic) local Langlands correspondence gives, for the level zero cuspidal $l$-adic representation obtained as in theorem \ref{cusplift}, \[ \pmb \pi = \Ind^{\GL{n}{L}}_{L^* \GL{n}{\OO_L}} \omega_{\alpha} \alpha \longleftrightarrow \Ind^{W(L)}_{W(L_n)} \mu \chi \] where on the Galois side, $\mu \chi$ becomes a character of $W(L_n)$ via the quotient map to the abelianization $W(L_n)^{ab} \cong L_n^*$ using the canonical isomorphism of class field theory.
\end{lem}

\begin{proof} Recall that we fixed an isomorphism $\overline \Q_l \cong \C$.
Bushnell and Henniart (\cite{BH1}) parametrize both sets of the local Langlands correspondence ($n$-dimensional Weil representations of $W(L)$ and irreducible, admissibile representations of $\GL{n}{L}$ for $L$ a non-archimedean local field of characteristic $0$) by the set of \emph{admissible pairs}, that is pairs $(L'/L, \xi)$ where $L'/L$ is tamely ramified and $\xi: L'^* \lra \overline \Q_l^*$ is a (continuous) character such that for every intermediate extension $L' \supset K \supset L $
\begin{enumerate}
\item if $\xi$ factors through $\Norm {L'} {K}$, then $L'=K$; 
\item if $\xi|_{ \left( 1 + \mathfrak p_{L'} \right)}$ factors through $\Norm{L'} {K}$, then $L' \supset K$ is unramified.
\end{enumerate}
Suppose $L'/L$ is unramified. Then the correspondence between admissible pairs and Weil representations simply associates to $(L'/L, \xi)$ the $[L':L]$-dimensional Weil representation $\Ind^{W(L)}_{W(L')} \xi$ where $\xi$ is extended to be a character of $W(L')$ via the quotient map to its abelianization $W(L')^{ab}$ and the isomorphism of class field theory. \\
Suppose now $\pmb \pi$ is a level zero cuspidal $l$-adic representation of $\GL{n}{L}$, with notation as in the statement of the lemma. Then proposition 2.2 in \cite{BH1} says that associating to $\pmb \pi$ the admissible pair $(L_n / L , \chi)$ is part of a more general bijection between admissible pairs and irreducible admissibile $\GL{n}{L}$-representations: the composition of the two bijections just described gives rise to the \emph{naive correspondence}. We denote this bijection as \[ (L'/L, \xi) \longleftrightarrow _L\pmb \pi_{\xi}. \]
The modification needed to turn the composition of the two bijections described above into the local Langlands correspondence consists in using a \emph{rectifier}, that is a tamely ramified character $\mu: L'^* \lra \overline \Q_l^*$ depending on the admissible pair $(L' / L, \xi)$, such that the new bijection \[ (L'/L, \xi) \longleftrightarrow _L\pmb \pi_{\mu \xi} \] between admissible pairs and $\GL{n}{L}$-representations yields the local Langlands correspondence upon composition: \[ \Ind^{W(L)}_{W(L')} \xi \longleftrightarrow _L\pmb \pi_{\mu \xi}, \] as in theorem A (or corollary 3.3) of \cite{BH1}.

Therefore, it remains to figure out what the rectifier looks like in case of a level zero cuspidal representation $\pmb \pi$: in this situation, admissibility of the associated pair $(L'/L, \xi)$ is equivalent to $L'/L$ being unramified, $\xi |_{1 + \mathfrak p_{L'}} \equiv 1$ being trivial, and $\xi$ being $\Gal(L'/L)$-regular, as explained in section 2.2 of \cite{BH1}. Thus we set $L' = L_n$ from now on. \\
By theorems C and D in \cite{BH2}, the rectifier $\mu$ is then the product of the rectifier for the pair $(L_n/L_n, \xi)$ and the $u$-rectifier for $(L_n/L, \xi)$. The former is just the trivial character, since the Langlands correspondence must match with class field theory in the $n=1$ case. Theorem 2 of \cite{BH2} and uniqueness of the $u$-rectifier as a tamely ramified character (theorem C of the same paper) together imply that the unramified character of $L_n^*$ which sends a uniformizer $\omega \mapsto (-1)^{n-1}$ is exactly the $u$-rectifier we are looking for. The claim of lemma \ref{LLC} follows. 
\end{proof}
\end{adjustwidth}

We continue then the proof of theorem \ref{Shintaniunram} by applying the lemma to our situation: $\pmb \pi$ is a representation of $\GL{n}{E}$ lifted as in theorem \ref{cusplift} from a cuspidal representation of $\GL{n}{k}$ corresponding to the character \[ \psi: k_n^* \lra \overline \Ql^*, \] while $\pmb \rho$ is a representation of $\GL{n}{F}$ lifted from the cuspidal representation of $\GL{n}{k_l}$ corresponding to the character \[ \phi = \psi \circ \Norm{k_{ln}}{k_n} : k_{ln}^* \lra \overline \Ql^*. \]
Then by the lemma, the local Langlands correspondence gives \[ \pmb \pi \longleftrightarrow \Ind^{W(E)}_{W(E_n)} \mu \psi \] and \[ \pmb \rho \longleftrightarrow \Ind^{W(F)}_{W(F_n)} \mu \phi \] where $\mu$ is the unramified character of $F_{n}^*$ taking value $(-1)^{n-1}$ on any uniformizer. We hence need to check that \begin{equation} \label{llcweilside} \res^{W(E)}_{W(F)} \left( \Ind^{W(E)}_{W(E_n)} \mu \psi \right) \cong \Ind^{W(F)}_{W(F_n)} \mu \phi. \end{equation}
First of all notice that both $\mu \psi$ and $\mu \phi$ factors through a finite quotient (by construction), hence we can apply Mackey theory: a well known result (for example, see Serre - proposition 22 in chapter 7 of \cite{serre}) says that \[ \res^{W(E)}_{W(F)} \left( \Ind^{W(E)}_{W(E_n)} \mu \psi \right) \cong \bigoplus_{a \in W(F) \backslash W(E) /W(E_n) } \Ind^{W(F)}_{W(F) \cap W(E_n)^a} \left( \mu \psi \right)^a|_{W(F) \cap W(E_n)^a} \] where the $a$-superscript indicates conjugation. \\
Since $(l,n)=1$ and all extensions of local fields considered are unramified, we have \[ W(F) \backslash W(E) /W(E_n) \cong l \Z \backslash \Z / n \Z \] which clearly consists of a unique coset. Hence the right hand side of the Mackey isomorphism becomes \[ \Ind^{W(F)}_{W(F) \cap W(E_n)} \left( \mu \psi \right)|_{W(F) \cap W(E_n)} = \Ind^{W(F)}_{W(F_n)} \left( \mu \psi \right)|_{W(F_n)}. \] It remains to check that $\mu \psi |_{W(F_n)} \cong \mu \phi$. This follows by the commutativity of the following diagram in local class field theory:
\begin{displaymath}
\xymatrix{
        W(E_n) \ar[r]^{\textnormal{su}} & E_n^* \ar[r]^{\mu \psi} & \overline \Ql^*   \\
        W(F_n) \ar[r]^{\textnormal{su}} \ar@{^{(}->}[u] &  F_n^* \ar[u]_{\Norm{F_n}{E_n}} & }
\end{displaymath} where the horizontal surjective maps are compositions \[ W(L) \twoheadrightarrow W(L)^{ab} \stackrel{\cong}{\lra} L^* \] for each local field $L$.
Then we obtain $\mu \psi|_{{W(F_n)}} = \mu \psi \circ \Norm{F_n}{E_n}$; this obviously coincide with $\mu \phi$ on units of the ring of integers (by definition of $\psi$ and $\phi$ and since $\mu$ is unramified), while on a uniformizer $\omega \in E_n$ of $F_n$ we have \[ \mu \psi \circ \Norm{F_n}{E_n} (\omega) = \mu (\omega^l) = (-1)^{l(n-1)}. \] This should coincides with $\mu \phi(\omega) = (-1)^{n-1}$ and the only way that this fails to happen is when both $l$ and $n$ are even, which cannot be since we are assuming $(l,n)=1$. The proof is complete.
\end{proof}

\section{The ramified case}
In this section we investigate the ramified case. \\
Let $F \supset E$ be a tamely, totally ramified extension of local fields of prime degree $l$, where $E \supset \Qp$ is a finite extension. Denote by $k$ the common residue field. \\
Let $\pmb \pi$ be a level zero, minimal-maximal type (terminology as in \cite{V} , chapter III, section 3), cuspidal, $l$-adic representation of $\GL{n}{E}$ and $\pmb \rho = \pmb {\mathrm{bc}(\pi)}$ be its base change to $\GL{n}{F}$. Denoting by $\sigma$ a generator of $\Gal(F/E)$, the obvious action on $\GL{n}{F}$ turns $\pmb \rho$ into a $\sigma$-invariant representation - and thus we can take Tate cohomology with respect to this action. What can be said about the relation between the $l$-modular representations $T^0(\pmb \rho)$ and $r_l \left( \pmb \pi \right)$?

\begin{thm}\label{ramcase} With the notation as above, $T^0(\pmb {\mathrm{bc}(\pi)}) \cong r_l \left( \pmb \pi \right)^{(l)}$ as $l$-modular representations of $\GL{n}{E}$.
\end{thm}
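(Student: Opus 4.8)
The plan is to follow the strategy announced in the introduction: make the base change $\pmb\rho=\pmb{\mathrm{bc}(\pi)}$ completely explicit, realize both $\pmb\rho$ and the lift of $r_l(\pmb\pi)^{(l)}$ as $\GL{n}{\cdot}$-equivariant $l$-sheaves on the vertex sets of Bruhat--Tits buildings, and compare them after taking Tate cohomology.

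First I would pin down $\pmb\rho$. Let $\psi$ be the regular character of $k_n^*$ which is the Green parameter of the finite cuspidal datum $\pi$ of $\pmb\pi$; inflating it to $\OO_{E_n}^*$ and extending trivially, Lemma~\ref{LLC} says that $\pmb\pi$ corresponds to $\Ind^{W(E)}_{W(E_n)}\mu\psi$ with $\mu$ the rectifier. Since $F/E$ is Galois, tamely and totally ramified of degree $l$, one has $\mu_l\subset E$, hence $l\mid q-1$ and $F\cap E_n=E$; Mackey theory then produces a single double coset and $\mathrm{res}^{W(E)}_{W(F)}\Ind^{W(E)}_{W(E_n)}\mu\psi\cong\Ind^{W(F)}_{W(F_n)}\big((\mu\psi)\circ\Norm{F_n}{E_n}\big)$ with $F_n=F\cdot E_n$. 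Because $F_n/E_n$ is tamely totally ramified of degree $l$, the norm induces the $l$-th power map on the common residue field $k_n$, and the uniformizer bookkeeping is consistent since $\psi^l(\overline\zeta)=\psi(\overline\zeta^{\,l})=1$ for $\zeta\in\mu_l(E)$; I would conclude that $\pmb\rho$ is again level zero of minimal--maximal type and cuspidal, and that it is the lift (Theorem~\ref{cusplift}) of the cuspidal representation $\rho$ of $\GL{n}{k}$ with Green parameter $\psi^l$. A short character computation on $\GL{n}{k}$—identical in spirit to the proof of Theorem~\ref{charscorresp}, using Theorem~\ref{cuspidalchar}, the integrality of Green polynomials, the identity $\overline z^{\,l}=\overline{z^l}$ in $\FFl$, and $(n,l)=1$ (which also gives $(-1)^{(n-1)l}=(-1)^{n-1}$ and $\overline n\in\FFl^*$)—then yields $r_l(\rho)\cong r_l(\pi)^{(l)}$ as $\GL{n}{k}$-representations. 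By Proposition~\ref{compat}(1)--(2) the theorem therefore reduces to the single isomorphism $T^0(\pmb\rho)\cong\cInd^{\GL{n}{E}}_{E^*\GL{n}{\OO_E}}r_l(\rho)$, i.e. to identifying $T^0(\pmb\rho)$ with the lift of $r_l(\rho)$.

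Next, following Bernstein--Zelevinsky \cite{BZ2}, I would write $\pmb\rho=\cInd^{\GL{n}{F}}_{F^*\GL{n}{\OO_F}}W$ as $\Gamma_c(Y_F,\mathcal F)$ for an $l$-sheaf $\mathcal F$ on $Y_F=F^*\GL{n}{\OO_F}\backslash\GL{n}{F}$, which is $\GL{n}{F}$-equivariantly the vertex set of the Bruhat--Tits building $\mathcal B_F$ of $\GL{n}{F}$ (a single orbit, indexed by homothety classes of $\OO_F$-lattices in $F^n$); likewise $Y_E=E^*\GL{n}{\OO_E}\backslash\GL{n}{E}$ is the vertex set of $\mathcal B_E$, and the lift of $r_l(\rho)$ is the corresponding constant $l$-sheaf on $Y_E$. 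Fixing a Tate operator $\mathrm T$, normalized as in the first paragraph of the proof of Lemma~\ref{T0welldefined} so that it reduces to the identity modulo $l$ on the stalk at a chosen base vertex, makes $\mathcal F$ $\sigma$-equivariant, and Proposition~\ref{TVprop} gives $T^0(\pmb\rho)=\Gamma_c(Y_F^\sigma,T^0\mathcal F)$. The crux is the analysis of $Y_F^\sigma$ and of the stalks of $T^0\mathcal F$. Using the canonical $\GL{n}{E}$-equivariant map $\iota\colon\mathcal B_E\to\mathcal B_F$ induced by $L'\mapsto\OO_F L'$, whose image is the $\sigma$-fixed subspace $\mathcal B_F^\sigma$ (up to rescaling the metric), one finds that $Y_F^\sigma$ consists of the single $\GL{n}{E}$-orbit $\iota(Y_E)$ of ``good'' vertices, with stabilizer $E^*\GL{n}{\OO_E}$, together with finitely many $\GL{n}{E}$-orbits of ``bad'' vertices---those vertices of $\mathcal B_F$ lying on $\mathcal B_F^\sigma$ but in the relative interior of a positive-dimensional simplex of $\mathcal B_E$. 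At a good vertex the relevant cocycle $g^{-1}\sigma(g)$ is trivial and $\sigma$ acts trivially on $k$, so with the chosen normalization of $\mathrm T$ the induced action on the stalk is trivial on the $\GL{n}{k}$-part; hence $T^0\mathcal F|_{\iota(Y_E)}$ is the $l$-sheaf with constant stalk $r_l(\rho)$, that is, the one realizing the lift of $r_l(\rho)$. At a bad vertex the cocycle is instead (conjugate to) a non-central semisimple element $s$ of $\GL{n}{k}$ of order $l$, of the shape $\diag(\overline\zeta^{\,j},1,\dots,1)$, and the induced action on the stalk is a root of unity times $\rho(s)$.

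The main obstacle is showing that the bad-vertex stalks of $T^0\mathcal F$ vanish, so that $T^0\mathcal F$ is supported on $\iota(Y_E)$. I would prove this by running, at each bad vertex, the Kirillov/mirabolic-model argument already used for Theorem~\ref{T1dies}: restricting the cuspidal $\rho$ to the mirabolic $P_n\subset\GL{n}{k}$ realizes it on the Gelfand function space on $U_{n-1}\backslash\GL{n-1}$, and since $s$ is regular semisimple and not unipotent no coset $U_{n-1}g$ satisfies $gsg^{-1}\in U_{n-1}$, so $\langle s\rangle$ permutes the Gelfand basis \emph{freely}. Consequently the $\langle s\rangle$-stable lattice in $\rho$ is a free $\overline{\Zl}[\Z/l\Z]$-module, whose Tate cohomology vanishes in both degrees, and this is unaffected by twisting the operator by a root of unity; hence $T^0(\mathcal F)_v=0=T^1(\mathcal F)_v$ at every bad vertex $v$. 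Therefore $T^0(\pmb\rho)=\Gamma_c\big(\iota(Y_E),T^0\mathcal F\big)\cong\cInd^{\GL{n}{E}}_{E^*\GL{n}{\OO_E}}r_l(\rho)$, which is the lift of $r_l(\rho)=r_l(\pi)^{(l)}$, and the latter equals $r_l(\pmb\pi)^{(l)}$ by Proposition~\ref{compat}. Besides the bad-vertex vanishing, the points I expect to require care are: the precise combinatorics of $Y_F^\sigma$ and the stabilizers of its bad vertices; the cocycle computation producing the element $s$ at each bad vertex; and the verification that the mod-$l$ normalization of $\mathrm T$ makes Lemma~\ref{T0welldefined} genuinely applicable, so that the good-vertex stalk is $r_l(\rho)$ and not zero.
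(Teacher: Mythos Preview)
Your outline is correct and follows essentially the same route as the paper: identify $\pmb\rho$ explicitly via Bushnell--Henniart (Green parameter $\chi^l$), realize both sides as $l$-sheaves on building vertex sets, apply Proposition~\ref{TVprop}, show the bad-vertex stalks of $T^0(\mathcal F)$ vanish through the Kirillov model, and match the good-vertex stalks via $r_l(\alpha_l)\cong r_l(\alpha)^{(l)}$. One imprecision to correct: the cocycle at a general bad vertex in the canonical chamber is $\diag(\xi_l^{i_1},\dots,\xi_l^{i_{n-1}},1)$ with $l\ge i_1\ge\dots\ge i_{n-1}\ge 0$ (where $\xi_l=\sigma(\varpi_F)/\varpi_F$), not merely $\diag(\overline\zeta^{\,j},1,\dots,1)$, and it is typically \emph{not} regular semisimple; your free-permutation argument on $U_{n-1}\backslash\GL{n-1}$ still works because all it needs is that the $\GL{n-1}$-block of this element is a non-identity semisimple element, which holds exactly at the bad vertices.
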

Although the final result is the same as in the unramified case, the technique for this proof is radically different, since the two local fields have now the same residue field and we cannot hence hope to deduce the claim from the corresponding fact for finite groups (as in the unramified case).

By proposition 3.5, chapter II of \cite{FV} there exists a uniformizer $\varpi_E$ of $E$ such that some root of the Eisenstein equation $X^l - \varpi_E$ is a primitive element for the extension $F \supset E$. Fix one such root and denote it by $\varpi_F$: we have $F=E(\varpi_F)$ and $\OO_F = \OO_E (\varpi_F)$.

Denote $\pmb \pi = \Ind^{\GL{n}{E}}_{E^* \GL{n}{\OO_E}} \omega_{\alpha} \alpha$ where $\alpha$ is an $l$-adic representation of $\GL{n}{k_E}$ which corresponds (via the Green correspondence) to a regular character $\chi: (k_{E_n})^* \lra \overline \Zl^*$.
\begin{lem} For the $l$-adic base change $\pmb \rho$ of $\pmb \pi$, we have $\pmb \rho = \mathbf{bc} (\pmb \pi) = \Ind^{\GL{n}{F}}_{F^* \GL{n}{\OO_F}} \omega_{\alpha_l} \alpha_l$ where $\alpha_l$ is the representation of $\GL{n}{k_E}$ correpondent to the Green character $\chi^l: (k_{E_n})^* \lra \overline \Zl^*$. In particular, $\pmb \rho$ is also a cuspidal $l$-adic representation of level zero and minimal-maximal type.
\end{lem}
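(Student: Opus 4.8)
The plan is to run the same machinery as in the proof of Theorem \ref{Shintaniunram} --- explicit local Langlands (Lemma \ref{LLC}), the fact that base change on the automorphic side is restriction on the Galois side, and Mackey theory --- but now with $F/E$ tamely \emph{totally ramified} of degree $l$ rather than unramified. Applying Lemma \ref{LLC} to $\pmb\pi = \Ind^{\GL{n}{E}}_{E^*\GL{n}{\OO_E}}\omega_\alpha\alpha$ gives
\[ \pmb\pi \longleftrightarrow \Ind^{W(E)}_{W(E_n)}\mu\chi, \]
where $\mu$ is the unramified character of $E_n^*$ with $\mu(\varpi_{E_n}) = (-1)^{n-1}$ and $\chi$ is inflated from $k_{E_n}^*$ and extended trivially over $\Z$. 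By the definition of base change, $\mathcal L_F(\pmb\rho) \cong \res^{W(E)}_{W(F)}\left(\Ind^{W(E)}_{W(E_n)}\mu\chi\right)$, so everything reduces to computing this restriction and recognising it, via Lemma \ref{LLC} read backwards, as the parameter of a level zero minimal-maximal cuspidal representation of $\GL{n}{F}$.

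\textbf{Mackey step.} First I would analyse the double cosets $W(F)\backslash W(E)/W(E_n)$. Since $E_n/E$ is unramified (hence Galois) of degree $n$ and $F/E$ is totally ramified of degree $l$ with $\gcd(l,n)=1$, the fields $F$ and $E_n$ are linearly disjoint over $E$: $F\cap E_n = E$, the compositum $F_n := FE_n$ satisfies $[F_n:F]=n$ and $[F_n:E_n]=l$, and $F_n/E_n$ is again tamely totally ramified of degree $l$ with $k_{F_n}=k_{E_n}$. Because $W(E_n)$ is normal in $W(E)$ with quotient $\Gal(E_n/E)$, and the image of $W(F)$ in $\Gal(E_n/E)$ is all of it (its kernel is $W(F)\cap W(E_n)=W(F_n)$, of index $[F_n:F]=n$), we have $W(F)W(E_n)=W(E)$, i.e.\ there is a \emph{single} double coset. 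Mackey's formula then gives
\[ \res^{W(E)}_{W(F)}\left(\Ind^{W(E)}_{W(E_n)}\mu\chi\right) \cong \Ind^{W(F)}_{W(F_n)}\left((\mu\chi)|_{W(F_n)}\right). \]

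\textbf{Identifying the restricted character.} The inclusion $W(F_n)\hookrightarrow W(E_n)$ induces on abelianisations the norm $\Norm{F_n}{E_n}\colon F_n^*\to E_n^*$ of local class field theory, so $(\mu\chi)|_{W(F_n)}$, viewed as a character of $F_n^*$, is $(\mu\chi)\circ\Norm{F_n}{E_n}$. Two checks remain. First, $\mu\circ\Norm{F_n}{E_n}$ is again the unramified rectifier: since $F_n/E_n$ has residue degree $1$, $v_{E_n}\!\left(\Norm{F_n}{E_n}(\varpi_{F_n})\right)=1$, so $\Norm{F_n}{E_n}(\varpi_{F_n})=\varpi_{E_n}\cdot u$ with $u$ a unit and $\mu\left(\Norm{F_n}{E_n}(\varpi_{F_n})\right)=\mu(\varpi_{E_n})=(-1)^{n-1}$; thus it agrees with the $\mu$ appearing in Lemma \ref{LLC} for $\GL{n}{F}$. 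Second, on units $\chi\circ\Norm{F_n}{E_n}$ reduces on the common residue field $k_{F_n}=k_{E_n}$ to $\chi^l$: for a tamely totally ramified Galois extension of degree $l$ the Galois group acts trivially on the residue field, so for $u\in\OO_{F_n}^*$ one has $\overline{\Norm{F_n}{E_n}(u)}=\overline u^{\,l}$, whence $\chi\left(\Norm{F_n}{E_n}(u)\right)=\chi^l(\overline u)$; moreover $\Norm{F_n}{E_n}$ sends $1+\mathfrak p_{F_n}$ into $1+\mathfrak p_{E_n}$, so $\chi\circ\Norm{F_n}{E_n}$ is trivial on $1+\mathfrak p_{F_n}$, i.e.\ it is a level zero character of $F_n^*$. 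Reading Lemma \ref{LLC} backwards, $\Ind^{W(F)}_{W(F_n)}(\mu\chi)\circ\Norm{F_n}{E_n}$ is therefore the Langlands parameter of a level zero minimal-maximal cuspidal representation of $\GL{n}{F}$, namely $\Ind^{\GL{n}{F}}_{F^*\GL{n}{\OO_F}}\omega_{\alpha_l}\alpha_l$ with $\alpha_l\leftrightarrow\chi^l$. The residual ambiguity in extending $\omega_{\alpha_l}$ trivially over $\Z$ --- concretely the sign $(-1)^{l-1}$ in $\Norm{F_n}{E_n}(\varpi_F)$ when $\varpi_F^l=\varpi_E$, which is nontrivial only for $l=2$ --- is exactly the ambiguity discussed in the remark after Theorem \ref{cusplift}, hence harmless.

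\textbf{Main obstacle.} The genuine content is the bookkeeping of normalisations: keeping the reciprocity isomorphisms, the norm map, the rectifier $\mu$ and the ``extend trivially over $\Z$'' convention consistent, so that one lands on the parameter attached to $\chi^l$ itself rather than an unramified twist of it. A secondary point deserving a line of justification is that $\chi^l$ is still a \emph{regular} character of $k_{E_n}^*$ (so that $\alpha_l$ is genuinely cuspidal and $\pmb\rho$ is irreducible): this can be obtained from $\gcd(l,n)=1$ by a short order computation in $k_{E_n}^*$, or simply quoted from Arthur--Clozel's theorem that the base change of a cuspidal representation of $\GL{n}$ is cuspidal when $\gcd(l,n)=1$, which together with Theorem \ref{cusplift} forces $\pmb\rho$ to be a lift and hence of level zero minimal-maximal type. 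The double-coset count and the residue-field computation, by contrast, are routine.
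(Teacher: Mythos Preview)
Your approach is essentially the paper's: apply Lemma \ref{LLC}, restrict via Mackey (single double coset by linear disjointness of $F$ and $E_n$ over $E$), identify $(\mu\chi)|_{W(F_n)}$ with $(\mu\chi)\circ\Norm{F_n}{E_n}$ via the class field theory square, and read off the Green character $\chi^l$ on units. The one place the paper works harder is the admissibility of the pair $(F_n/F,\xi)$, equivalently the regularity of $\chi^l$: your ``short order computation from $\gcd(l,n)=1$'' is not quite sufficient on its own---one also needs $l\mid |k_E|-1$, which holds here precisely because the Galois totally tamely ramified extension $F/E$ forces $\mu_l\subset k_E$---so either make that hypothesis explicit in the computation (then $\Gal(k_n/k)$ acts trivially on the $l$-primary part of $\widehat{k_n^*}$ and the argument goes through) or take your Arthur--Clozel shortcut.
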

\begin{proof} This is a computation based on the explicit local Langlands by Bushnell and Henniart (\cite{BH1}, \cite{BH2}). Using lemma \ref{LLC} one gets the Weil representation corresponding to $\pmb \pi$: \[ \Ind^{W(E)}_{W(E_n)} \mu \chi \] where $\chi$ has been inflated to $\OO_{E_n}^*$ first, and then extended trivially to get a character of $E_n^*$, while $\mu : E_n^* \lra \overline \Zl^*$ is the unramified character taking value $(-1)^{n-1}$ on the uniformizer $\varpi_E$. Denote by $\gamma$ this Weil representation, and consider now $\res^{W(E)}_{W(F)} \gamma$. By Mackey theory, we get \[ \res^{W(E)}_{W(F)} \left( \Ind^{W(E)}_{W(E_n)} \mu \chi \right) \cong \bigoplus_{a \in W(F) \backslash W(E) /W(E_n) } \Ind^{W(F)}_{W(F) \cap W(E_n)^a} \left( \mu \chi \right)^a|_{W(F) \cap W(E_n)^a} \] where the $a$-superscript indicates conjugation. \\
Since $F$ and $E_n$ are linearly disjoint over $E$, we get that $W(F) W(E_n) = W(E)$ while $W(F) \cap W(E_n) = W(F E_n) = W(F_n)$ and hence the sum over the double cosets reduces to $\Ind^{W(F)}_{W(F_n)} \left( \mu \chi \right)|_{W(F_n)}$. The following commutative diagram from local class field theory \begin{displaymath}
\xymatrix{
        W(E_n) \ar[r]^{\textnormal{su}} & E_n^* \ar[r]^{\mu \chi} & \overline \Zl^*   \\
        W(F_n) \ar[r]^{\textnormal{su}} \ar@{^{(}->}[u] &  F_n^* \ar[u]_{\Norm{F_n}{E_n}} & }
\end{displaymath} tells us that \[ \left( \mu \chi \right)|_{W(F_n)} = \left( \mu \chi \right) \circ \Norm{F_n}{E_n}. \]
Now we want to show that the pair $ \left( F_n/F, ( \mu \chi) \circ \Norm{F_n}{E_n} \right)$ is admissible, in the terminology of \cite{BH1}. Denote $\xi = (\mu \chi) \circ \Norm{F_n}{E_n}$, then by Serre \cite{serre2}, chapter V, proposition 4 we have $\Norm{F_n}{E_n} \left( 1 + \varpi_F \OO_{F_n} \right) \subset 1 + \varpi_E \OO_{E_n}$ and since by construction $\mu \chi$ is trivial on the latter filtration subgroup, we get $\xi|_{ 1 + \varpi_F \OO_{F_n} } \equiv 1$. \\
As explained in section 2.2 of \cite{BH1}, admissibility of the pair $\left( F_n/F, \xi  \right)$ is then equivalent to $\xi$ being $\Gal (F_n / F)$-regular, that is, $\xi \neq \xi^{\sigma}$ for any nontrivial $\sigma \in \Gal(F_n / F)$. We show that this follows from the fact that $\chi$ is $\Gal(E_n / E)$-regular. Suppose $\xi$ is not regular, so that $\xi = \xi^{\sigma}$ for some nontrivial $\sigma \in \Gal(F_n / F)$. Therefore the two characters must coincide when restricted to $\OO_{F_n}^*$. \\
Notice that since $F$ and $E_n$ are linearly disjoint over $E$ and have compositum $F_n$, we have $\Gal(F_n / F) \times \Gal(F_n/E_n) \cong \Gal(F_n / E)$. In particular, $\sigma$ commutes with every element of $\Gal(F_n/E_n)$, so that \[ \sigma \left( \Norm{F_n}{E_n} (x) \right) = \Norm{F_n}{E_n} (\sigma(x)) \quad \forall x \in \OO_{F_n}^*. \]
By \cite{serre2}, chapter V, corollary 3, the image of $\Norm{F_n}{E_n} \left( \OO_{F_n}^* \right)$ contains $1 + \varpi_E \OO_{E_n}$ and by corollary 7 and the following remark we have that \[  k_n^*  \cong \OO_{E_n}^* / \left( 1 + \varpi_E \OO_{E_n} \right) \twoheadrightarrow \OO_{E_n}^* / \Norm{F_n}{E_n} \left( \OO_{F_n}^* \right) \cong \Z / l \Z \] is a prime order quotient of a cyclic group. \\
Since by definition $\mu$ is trivial on $\OO_{E_n}^*$, we obtain that $\chi = \chi^{\sigma}$ on an index $l$ subgroup of $k_n^*$. The character $\chi^\sigma \cdot \chi^{-1}$ is then a character of $\Z / l \Z$ of order dividing the order $r$ of $\sigma$, since clearly $\left( \chi^{\sigma} \right)^r = \chi^r$. But $r | n $, which is coprime to $l$, hence we have a character of $\Z / l \Z$ of order coprime to $l$. This order can only be 1, i.e. $\chi = \chi^{\sigma}$ on the entire $k_n^*$. But this contradicts the regularity of the character $\chi$, since $\sigma \in \Gal(F_n / F) \cong \Gal(k_n / k)$ is a non-trivial element.

Thus, the pair $\left( F_n / F, \left( \mu \chi \right) \circ \Norm{F_n}{E_n} \right)$ is admissible, and hence by proposition 2.2 in \cite{BH1}, the supercuspidal $l$-adic representation associated to the Galois representation $\Ind^{W(F)}_{W(F_n)} \xi$ is of level zero, minimal-maximal type, and ultimately depends on the Green character $\zeta:k_n^* \lra \overline \Zl^*$ whose inflation is $\xi$. To figure out what $\zeta$ is, we only need to compute $\xi$ on an element $x \in \OO_{F_n}^*$ whose reduction modulo $1 + \omega_F \OO_{F_n}$ generates $k_n^*$. But since $F_n / E_n$ is totally ramified, there exist one such element $x$ in $ \OO_{E_n}^*$: we get then \[ \xi(x) = \mu \chi \left( \Norm{F_n}{E_n} (x) \right) = \mu \chi (x^l) = \chi(x)^l, \] so that $\zeta = \chi^l$ and this ultimately shows the claim of the lemma.
\end{proof}

We can now compute Tate cohomology for the $l$-adic representation $\pmb \rho = \mathbf{bc} (\pmb \pi)$, we'll use proposition \ref{TVprop}. \\
In our situation, we have that $X = F^* \GL{n}{\OO_F} \backslash \GL{n}{F}$, $\sigma$ is a generator of $\Gal(F/E)$ and on an open set $U \subset F^* \GL{n}{\OO_F} \backslash \GL{n}{F}$ we have \[ \Gamma_c \left( U, \mathcal F \right) = \begin{array}{l} \{ f: \GL{n}{F} \lra  V  \textnormal{ supported on $F^* \GL{n}{\OO_F} U$ and such that} \\ f(zkg) = \omega_{\alpha_l}(z) \alpha_l(k).f(g) \quad \forall z \in F^*, \, k \in \GL{n}{\OO_F}, \, g \in \GL{n}{F} \}. \end{array} \] where we denote by $V$ the underlying space of $\alpha_l$. The map of sheaves realizing $\sigma$-equivariance of $\mathcal F$ is \[ \lambda: \mathcal F \lra \mathcal F \] defined on an open set $U \subset X$ as \[ \lambda(U): \mathcal F(U) \lra \mathcal F(\sigma^{-1}U) \quad \lambda(U).f (g) = f(\sigma g), \] because the identity map on $V$ is already realizing $\alpha_l \cong \alpha_l \circ \sigma$, due to the fact that $E \subset F$ is totally ramified.

Notice that $X = F^* \GL{n}{\OO_F} \backslash \GL{n}{F}$ is the set of vertices of the Bruhat-Tits building $\mathcal B \left( \GL{n}{F} \right) $ for $\GL{n}{F}$, which has a canonical structure of simplicial complex - and hence we can denote its set of vertices as $C^0 \left( \mathcal B \left( \GL{n}{F} \right)  \right)$. Each vertex corresponds to a homothety class of $\OO_F$-lattices in $F^n$, and the Bruhat-Tits building $\mathcal B \left( \GL{n}{E} \right) $ of $\GL{n}{E}$ is naturally embedded inside $\mathcal B \left( \GL{n}{F} \right) $ by considering homothety classes of $\OO_{F}$-lattices which have a basis defined over $\OO_E$. \\
The building admits a natural action of the Galois group, and it is a standard result (e.g. G. Prasad in \cite{Prasad}) that for tamely ramified extensions $F \supset E$ we have \[ \mathcal B \left( \GL{n}{F} \right)^{\Gal(F/E)}  = \mathcal B \left( \GL{n}{E} \right) \] where this is an equality of ``convex subsets" as explained in the source above. In particular, this means that the set of vertices $C^0 \left( \mathcal B \left( \GL{n}{F} \right)  \right)^{\Gal (F / E)}= X^{\sigma}$ is exactly the set of vertices of $\mathcal B \left( \GL{n}{F} \right)$ lying in the closure of a chamber of $\mathcal B \left( \GL{n}{E} \right)$ under the canonical embedding. \\
Consider now the canonical chamber for $\mathcal B \left( \GL{n}{E} \right)$: the one having vertices \[ \Lambda_1 = \left[ \OO_E \oplus \ldots \oplus \OO_E \right], \ldots, \Lambda_n = \left[ \mathcal \varpi_E \OO_E \oplus \ldots \oplus \varpi_E \OO_E \oplus \OO_E \right] \] and stabilized by the standard upper Iwahori. The vertices of $X^{\sigma}$ lying in the closure of this chamber are \[ \left\{ \textnormal{diag} \left( \varpi_F^{i_1}, \ldots, \varpi_F^{i_{n-1}}, 1 \right) . \Lambda_1 \right\} \textnormal{ for } l \ge i_1 \ge \ldots \ge i_{n-1} \ge 0. \] Since $\GL{n}{E}$ acts transitively on the chambers of $\mathcal B \left( \GL{n}{E} \right)$, we obtain that \[ X^{\sigma} = \bigcup_{l \ge i_1 \ge \ldots \ge i_{n-1} \ge 0} \GL{n}{E} \left\{ \textnormal{diag} \left( \varpi_F^{i_1}, \ldots, \varpi_F^{i_{n-1}}, 1 \right) . \Lambda_1 \right\} \] where the union is not disjoint.

\begin{prop} The sheaf $T^0( \mathcal F)$ is supported on $C^0 \left( \mathcal B \left( \GL{n}{E} \right)  \right)$, that is, supported on the vertex set of the building $\mathcal B \left( \GL{n}{E} \right) $ over the smaller field.
\end{prop}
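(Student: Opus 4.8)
The plan is to compute $T^0(\mathcal F)$ stalk by stalk over $X^\sigma$ and to show it vanishes at every vertex of $\mathcal B\bigl(\GL{n}{F}\bigr)$ that is not defined over $\OO_E$. Since the stalk functor for $l$-sheaves is exact, the stalk of $T^0(\mathcal F)$ at $x\in X^\sigma$ is the Tate cohomology $\widehat H^0\bigl(\langle\sigma\rangle,\mathcal F_x\bigr)$ of the stalk $\mathcal F_x$, regarded (after fixing an invariant lattice $L$ of $\alpha_l$, on which the center acts through a root of unity) as a module over $\overline\Zl[\langle\sigma\rangle]\cong\overline\Zl[\Z/l\Z]$. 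Writing $X=H\backslash G$ with $H=F^*\GL{n}{\OO_F}$ and using the description of $\mathcal F$ as a sheaf of $V$-valued functions recalled above, a point $x=Hg$ has stalk $L$; and if $x$ is $\sigma$-fixed, so that $h_0:=\sigma(g)g^{-1}\in H$, then the operator induced on $\mathcal F_x$ by the equivariance map $\lambda$, $(\lambda f)(g)=f(\sigma g)$, is multiplication by $\omega_{\alpha_l}\alpha_l(h_0)$ (here the identity of $V$ realises $\alpha_l\cong\alpha_l\circ\sigma$ by total ramification). A vertex defined over $\OO_E$ may be represented by $g\in\GL{n}{E}$, so there $h_0=1$ and the stalk of $T^0(\mathcal F)$ equals $L/lL\ne 0$; this accounts for the support that genuinely occurs, and the proposition reduces to the vanishing of $\widehat H^0\bigl(\langle\sigma\rangle,L\bigr)$ at the remaining vertices.

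By the description of $X^\sigma$ above and transitivity of $\GL{n}{E}$ on the chambers of $\mathcal B\bigl(\GL{n}{E}\bigr)$, it suffices to treat $x=\diag(\varpi_F^{i_1},\dots,\varpi_F^{i_{n-1}},1).\Lambda_1$ with $l\ge i_1\ge\cdots\ge i_{n-1}\ge 0$; since $\varpi_F^l$ generates $\varpi_E\OO_F$, such a vertex has a basis over $\OO_E$ exactly when every $i_j\in\{0,l\}$, so we are interested in the case that some $i_m\in\{1,\dots,l-1\}$. As $F/E$ is a degree $l$ Galois extension we have $\mu_l\subset E$, and because $p\ne l$ the chosen uniformiser satisfies $\sigma(\varpi_F)=\zeta_l\varpi_F$ for a primitive $l$-th root of unity $\zeta_l\in\OO_E^*$ whose reduction $\bar\zeta\in k^*$ again has order $l$. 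A short computation then identifies $h_0$, up to inversion, with $\diag(\zeta_l^{i_1},\dots,\zeta_l^{i_{n-1}},1)\in\GL{n}{\OO_F}$, whose image in the reductive quotient $\GL{n}{k}$ is the semisimple element $\bar z=\diag(\bar\zeta^{i_1},\dots,\bar\zeta^{i_{n-1}},1)$ (or its inverse); moreover $\bar z\ne 1$ precisely because not all $i_j$ lie in $\{0,l\}$. Hence $\mathcal F_x$ is $L$ with the $\langle\bar z\rangle$-action induced from $\alpha_l$, and it remains to show this $\overline\Zl[\langle\bar z\rangle]$-module is free; then its Tate cohomology vanishes.

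For freeness I pass to the Kirillov model, exactly as in the proof of Theorem~\ref{T1dies}. The element $\bar z$ lies in the mirabolic subgroup $P_n\subset\GL{n}{k}$, in fact in its subgroup $\GL{n-1}{k}$, where it is the image $\bar z'$ of $\diag(\bar\zeta^{i_1},\dots,\bar\zeta^{i_{n-1}})$. By Vigneras (Theorem~1.1(b), chapter~III of \cite{V}) one has $\alpha_l|_{P_n}\cong\Ind_{U_n}^{P_n}\psi$ for a non-degenerate, $\overline\Zl^*$-valued $\psi$, and $r_l(\alpha_l)|_{P_n}\cong\Ind_{U_n}^{P_n}\bar\psi$ is irreducible; so by the Brauer--Nesbitt uniqueness of invariant lattices the restriction of $L$ to $P_n$ is homothetic to the standard lattice $L_0=\bigoplus_g\overline\Zl\cdot 1_g$ with basis indexed by the cosets $U_n\backslash P_n\cong U_{n-1}\backslash\GL{n-1}{k}$. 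On this basis $\bar z'$ acts through its right translation on cosets, permuting the $1_g$ up to units; and since a nontrivial \emph{semisimple} element of $\GL{n-1}{k}$ is never conjugate into the unipotent subgroup $U_{n-1}$, the element $\bar z'$ fixes no coset, so every $\langle\bar z'\rangle$-orbit has length $l$. Choosing coset representatives compatibly with the $\bar z'$-action, $L\cong L_0$ is therefore, as a $\overline\Zl[\langle\bar z\rangle]$-module, a direct sum of copies of the regular representation $\overline\Zl[\langle\bar z\rangle]$ (one per orbit), hence free; thus $\widehat H^0\bigl(\langle\bar z\rangle,L\bigr)=0$ and the stalk of $T^0(\mathcal F)$ at $x$ vanishes. (For $n=1$ there is nothing to do: $X^\sigma=C^0(\mathcal B(\GL{1}{E}))$ is a single point.)

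The delicate part is the bookkeeping of the $\sigma$-equivariant structure on the $l$-sheaf $\mathcal F$: it is what lets one read off the $\sigma$-action on each stalk as $\omega_{\alpha_l}\alpha_l(h_0)$ with the correct normalisation of $\lambda$, and then one must check that the restriction of the invariant lattice $L$ to $P_n$ is the standard lattice of the Kirillov model, which is where irreducibility of the mod $l$ mirabolic representation is used. Granting this, the geometric heart of the argument is very short: a nontrivial semisimple element acts without fixed points on $U_{n-1}\backslash\GL{n-1}{k}$, which forces the stalk to be a free module and so kills $T^0$ away from the vertices defined over $\OO_E$. I expect this last point to be the real conceptual input.
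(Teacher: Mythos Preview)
Your argument is correct and reaches the same conclusion as the paper, but the route diverges at the key step. Both proofs first reduce to a single vertex $w = \varpi_F^i \cdot \Lambda_1$ in the canonical chamber and identify the stalk of $T^0(\mathcal F)$ there with the Tate cohomology of the lattice $L$ under the operator $\alpha_l(\bar z)$, where $\bar z = \diag(\bar\zeta^{i_1},\dots,\bar\zeta^{i_{n-1}},1)$; the task is to show this vanishes when $\bar z\neq 1$.

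From here the paper applies Proposition~\ref{TVprop} a second time: it realises $V$ as sections of a sheaf $\mathcal F'$ on $Y = U_{n-1}(k)\backslash \GL{n-1}{k}$ equipped with the equivariant structure coming from \emph{conjugation} by $\xi_l^i$, and then kills $\ker(1-\lambda)$ at each fixed coset of $Y^{c_{\xi_l^i}}$ via an elementary-matrix computation exploiting the non-degeneracy of $\psi$ (pick $k$ with $i_k>i_{k+1}$ and evaluate at $(I+e_{k,k+1})g$). You instead stay in the Kirillov model and use that $\bar z\in P_n$ acts on $L_0=\bigoplus_g \overline\Zl\cdot 1_g$ by \emph{right translation} of cosets, up to units coming from $\psi$; since a nontrivial semisimple element of $\GL{n-1}{k}$ cannot be conjugated into $U_{n-1}$, there are no fixed cosets, every orbit has length $l$, and after rescaling along each orbit (the product of the unit twists around a cycle is $1$ because $\alpha_l(\bar z)^l=\id$) the lattice is visibly free over $\overline\Zl[\langle\bar z\rangle]$.

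Your route is shorter and more transparent: one structural observation (semisimple $\neq$ unipotent) replaces the second layer of sheaf machinery and the $\psi$-nondegeneracy trick. The paper's approach has the advantage of uniformity---the same Treumann--Venkatesh proposition handles both reductions---but yours isolates the reason for the vanishing more cleanly. One small simplification: you do not need irreducibility of the mod-$l$ mirabolic representation to pin down $L|_{P_n}$. The $\GL{n}{k}$-invariant lattice in $V$ is already unique up to homothety (since $r_l(\alpha_l)$ is irreducible, cf.\ chapter~III, \S3.1 of \cite{V}), and the Gelfand--Kirillov lattice $L_0$ is itself $\GL{n}{k}$-stable because the Bessel function is $\overline\Zl$-valued and $|U_{n-1}|$ is a $p$-power, hence a unit in $\overline\Zl$.
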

The idea of the proof is the following: by topological considerations, one reduces to check that the sections of the sheaf $T^0( \mathcal F)$ over a vertex in the canonical chamber is zero unless this vertex is in $C^0 \left( \mathcal B \left( \GL{n}{E} \right) \right)$. For a vertex in the canonical chamber we explicitly compute $T^0$ which turns out to be sections of $\mathcal F$ taking values in the Tate cohomology of the underlying space $V$ with respect to a specific conjugation operator (depending on the vertex). A concrete description of $V$ given by the Kirillov model allows us to show that the Tate cohomology with respect to such an operator is trivial unless the conjugation itself is trivial, which corresponds to the vertex being in $C^0 \left( \mathcal B \left( \GL{n}{E} \right) \right)$.
\begin{proof}
First of all, since $F^* \GL{n}{\OO_F}$ is an open subgroup of $\GL{n}{F}$ the topology on $X$ is discrete, and hence it suffices to show that for any vertex $w \in C^0 \left( \mathcal B \left( \GL{n}{F} \right)  \right)^{\Gal(F/E)} - C^0 \left( \mathcal B \left( \GL{n}{E} \right) \right)$ we have $\Gamma(w, T^0 \left( \mathcal F \right) )=0$. Moreover, $\GL{n}{E}$ acts transitively on the chambers of its own building $\mathcal B \left( \GL{n}{E} \right)$ hence it suffices to show $\Gamma(w, T^0 \left( \mathcal F \right) )=0$ when $w$ is in the canonical chamber, because for any other $w'$ the representation induced by $T^0 \left( \mathcal F \right)$ at the vertex $w'$ will be $\GL{n}{E}$-conjugate (and hence $\GL{n}{E}$-isomorphic) to one given by a vertex in the canonical chamber.

Fix then a vertex \[ w = \diagon \left( \varpi_F^{i_1}, \ldots, \varpi_F^{i_{n-1}}, 1 \right) . \Lambda_1 \textnormal{ for some } l \ge i_1 \ge \ldots \ge i_{n-1} \ge 0 \] in the canonical chamber of $\mathcal B \left( \GL{n}{E} \right)$. We denote $w = \varpi_F^i. \Lambda_1$ and consider \[ \Gamma_c \left( \varpi_F^i. \Lambda_1 , \mathcal F \right) = \begin{array}{l} \{ f: \GL{n}{F} \lra  V  \textnormal{ supported on $F^* \GL{n}{\OO_F} \varpi_F^i$ and such that} \\ f(zkg) = \omega_{\alpha_l}(z) \alpha_l(k).f(g) \quad \forall z \in F^*, \, k \in \GL{n}{\OO_F}, \, g \in \GL{n}{F} \}. \end{array}. \] Any $f \in \Gamma_c \left( \varpi_F^i. \Lambda_1 , \mathcal F \right)$ is then obviously determined by its value on $\varpi_F^i$, and we denote $v = f(\varpi_F^i)$. Now $f \in \ker (1- \lambda)(w)$ if and only if $ \alpha_l \left( \xi_l^i \right) . v = v$, where we denote $\xi_l^i = \diagon \left( \xi_l^{i_1}, \ldots, \xi_l^{i_{n-1}}, 1 \right)$ for the $l$-th root of unity $\xi_l = \frac{\sigma(\varpi_F)}{\varpi_F}$. \\
Consider then \[ N(w) = \im \left( N_{\lambda} \right) (w) = \left\{ \sum_{m=0}^{l-1} \left( \lambda^m.f \right) \right\} \textnormal{ as $f$ varies in } \mathcal F(w). \] We have \[ \sum_{m=0}^{l-1} \left( \lambda^m.f \right) (\varpi_F^i) = \left( \sum_{m=0}^{l-1} \alpha_l^m(\xi_l^i) \right) . f(\varpi_F^i) \] and hence \[ N(w) = \left\{ f \in \mathcal F(w) \textnormal{ such that } v = f( \varpi_F^i) \in \im \left( \sum_{m=0}^{l-1} \alpha_l^m(\xi_l^i) \right) \right\}. \] We obtain \[ \Gamma_c \left( w, T^0(F) \right) = \begin{array}{l} \{ f: \GL{n}{F} \lra T^0(\alpha_l(\xi_l^i), V) \textnormal{ supported on } F^* \GL{n}{\OO_F}\varpi_F^i \\ \textnormal{ and such that } f(zkg) = \omega_{\alpha_l}(z) \alpha_l(k).f(g) \, \forall z \in F^*, \, k \in \GL{n}{\OO_F}, \, g \in \GL{n}{F} \}. \end{array} \]

It remains thus to compute $T^0(\alpha_l(\xi_l^i), V)$ and show that this is $0$ as soon as $w = \varpi_F^i \notin \mathcal C^0 \left( \mathcal B(\GL{n}{E}) \right)$. Notice that by character theory, \[ \alpha_l \cong \alpha_l \circ c_{\xi_l^i} \textnormal { as representations of } \GL{n}{k_E} \] where we denote by $c_g$ conjugation by $g \in \GL{n}{k_E}$. In fact, the operator $V \stackrel{T}{\lra} V$ realizing this isomorphism is exactly $\alpha_l(\xi_l^i)$, and since $V = \Ind^{\GL{n-1}{k_E}}_{\mathrm U_{n-1}(k_E)} \psi$, we have (denoting $Y = \left( \mathrm U_{n-1}(k_E) \backslash \GL{n-1}{k_E} \right)$ \[ T^0 \left( \alpha_l(\xi_l^i), V \right)  \cong \Gamma_c \left( Y^{c_{ \xi_l^i }} , T^0 ( \mathcal F' ) \right) \] where $\mathcal F'$ is the $l$-sheaf associated to $V$, that is \[ \mathcal F' (A) = \begin{array}{l} \{ f: \GL{n-1}{k_E} \lra \overline \Zl \textnormal{ supported on } \mathrm U_{n-1}(k_E) A \\ \textnormal{ and such that } f(ug) = \psi(u) f(g) \, \forall u \in \mathrm U_{n-1}(k_E), \, g \in \GL{n-1}{k_E} \}. \end{array} \]
Under this notation, the map of sheaves making $\mathcal F'$ a $c_{\xi_l^i}$-equivariant sheaf is \[ c_{\xi_l^i} (A): \mathcal F' (A) \lra \mathcal F' (c_{\xi_l^i}^{-1} (A) ) \quad \left( c_{\xi_l^i} (A).f \right) (g) = f(c_{\xi_l^i} (g)), \] so we will compute Tate cohomology of $\mathcal F'$ with respect to this map.
By the usual argument about automorphisms of prime order $l$ fixing a subgroup of order coprime to $l$, we have that any coset $\mathrm U_{n-1}(k_E) g \in Y$ fixed by $c_{\xi_l^i}$ admits a representative which is itself fixed, i.e. we can assume that $c_{\xi_l^i}(g) = g$ for each $\mathrm U_{n-1}(k_E) g \in Y^{c_{\xi_l^i}}$. \\
Fix then such a coset, we want to compute \[ T^0(\mathcal F') (\mathrm U_{n-1}(k_E) g) = \ker \left( 1 - \alpha_l(\xi_l^i) \right) \left( \mathrm U_{n-1}(k_E) g \right) / \im N(\mathrm U_{n-1}(k_E) g ). \] Any $f \in \ker \left( 1 - \alpha_l(\xi_l^i) \right) \left( \mathrm U_{n-1}(k_E) g \right) $ will satisfy $f(h) = f(c_{\xi_l^i} (h))$. Choose some $1 \le k \le n-1$ such that $i_k  > i_{k+1}$ and denote by $e_{k, k+1}$ the elementary matrix with a $1$ in the $(k, k+1)$ slot and zeros everywhere else; we have \[ c_{\xi_l^i} \left( I_n + e_{k,k+1} \right) = I_n + \xi_l^{i_k - i_{k+1}} e_{k, k+1} \] and hence choosing $h = \left( I_n + e_{k,k+1} \right) g$ we obtain \[ \psi(1) f(g) = f(h) = f(c_{\xi_l^i}(h)) = f( I_n + \xi_l^{i_k - i_{k+1}} e_{k, k+1} g) = \psi(\xi_l^{i_k- i_{k+1}}) f(g) \] which implies $f(g) = 0$, because the regular character $\psi$ can be chosen to be injective, and hence $\psi(1) \neq \psi(\xi_l^{i_k - i_{k+1}})$.

This shows that $ \ker \left( (1 - \alpha_l(\xi_l^i) \left( \mathrm U_{n-1}(k_E) g \right) \right) = 0$ for each $\mathrm U_{n-1}(k_E) g \in Y^{c_{\xi_l^i}}$, and in particular \[ \Gamma_c \left( Y^{c_{\xi_l^i}} , T^0 (\mathcal F') \right) = 0 \] which proves the proposition. 

\end{proof}

We are finally ready to prove theorem \ref{ramcase}.
\begin{proof}[Proof of theorem \ref{ramcase}] Both $r_l(\pmb \pi)$ and $T^0 (\pmb \rho)$ are now $l$-modular representations that can be viewed as $l$-sheaves on the vertices of the Bruhat-Tits building of $\GL{n}{E}$, therefore it suffices to check that at each vertex they assign the same vector space, with compatible actions.

Fix $w = E^* \GL{n}{\OO_E}g_w \in \mathcal C^0 \left( \mathcal B(\GL{n}{E} \right) $ for some $g_w \in \GL{n}{E}$ and consider \[ \Gamma_c \left( w , \mathcal F \right) = \begin{array}{l} \{ f: \GL{n}{F} \lra  V  \textnormal{ supported on $E^* \GL{n}{\OO_E} g_w$ and such that} \\ f(zkg) = \omega_{\alpha_l}(z) \alpha_l(k).f(g) \quad \forall z \in F^*, \, k \in \GL{n}{\OO_F}, \, g \in \GL{n}{F} \}. \end{array}. \]
Since $g_w$ is $\sigma$-fixed, every $f \in \Gamma_c \left( w , \mathcal F \right)$ is trivially in $\ker (1- \lambda)(w)$: equivalently, $f(g_w) = v$ can be any vector in $V$. Now $(Nf)(g) = l \cdot f(g)$, hence we obtain that \[ \Gamma_c \left( w , T^0 ( \mathcal F ) \right) = \begin{array}{l} \{ f: \GL{n}{F} \lra  V \otimes \overline \Fl  \textnormal{ supported on $E^* \GL{n}{\OO_E} g_w$ and such that} \\ f(zkg) = \omega_{\alpha_l}(z) \alpha_l(k).f(g) \quad \forall z \in F^*, \, k \in \GL{n}{\OO_F}, \, g \in \GL{n}{F} \}. \end{array}. \]
As in proposition \ref{compat}, (1), the reduction of the lift $r_l( \pmb \pi)$ is isomorphic to the lift of the reduction, hence \[ \Gamma_c \left( w , r_l( \pmb \pi) \right) = \begin{array}{l} \{ f: \GL{n}{E} \lra  V \otimes \overline \Fl \textnormal{ supported on $E^* \GL{n}{\OO_E} g_w$ and such that} \\ f(zkg) = \omega_{\alpha}(z) \alpha(k).f(g) \quad \forall z \in F^*, \, k \in \GL{n}{\OO_F}, \, g \in \GL{n}{F} \}. \end{array}. \]
Notice now that after reducing $\mod l$, we have \[ r_l(\alpha)^{(l)} = r_l(\alpha) \otimes_{ ( \overline \Fl, \Frob_l)} \overline \Fl \cong r_l(\alpha_l) \] where the tensor product on the left is the Frobenius twist, because for the corresponding Green characters we have \[ \chi \otimes_{\overline \Fl} \overline \Fl = \chi^l \] and hence the Brauer characters of $r_l(\alpha)^{(l)}$ and $ r_l(\alpha_l)$ coincide. Both modular representations are naturally realized on $V \otimes \overline \Fl$. \\
Thus at every vertex $w \in \mathcal \mathcal C^0 \left( \mathcal B(\GL{n}{E} \right)$ the two $l$-sheaves $r_l(\pmb \pi)^{(l)}$ and $T^0 (\pmb \rho)$ have the same space of sections (as in proposition \ref{compat}, (2), taking Frobenius twists commutes with lifting) and the actions of $\Stab_{\GL{n}{E}}(w)$ coincide. Since the translation action of $\GL{n}{E}$ on the vertices $\mathcal C^0 \left( \mathcal B(\GL{n}{E} \right)$ is also obviously the same, the two $l$-sheaves $r_l(\pmb \pi)^{(l)}$ and $T^0(\pmb \rho)$ coincide, and by the Bernstein-Zelevinsky' equivalence of categories this proves that the two representations $r_l(\pmb \pi)^{(l)}$ and $T^0(\pmb \rho)$ are isomorphic.
\end{proof}


\begin{thebibliography}{aaa}

\bibitem{AC} Arthur - Clozel,
\textit{Simple Algebras, Base Change, and the Advanced Theory of the Trace Formula},
Annals of Mathematical Studies, Princeton University Press, number 120, 1989.

\bibitem{arthur} Arthur,
\textit{The principle of functoriality},
Bulletin of the AMS, vol.40, no.1, 39-53, 2002.

\bibitem{bernstein} Bernstein, Rummelhart,
\textit{Draft of: Representations of $p$-adic groups},
lectures at Harvard University, 1992.

\bibitem{BZ1} Bernstein, Zelevinsky,
\textit{ Induced representations of reductive $p$-adic groups, I},
Annales Scientifiques de l'E.N.S. ,1977.

\bibitem{BZ2} Bernstein, Zelevinsky,
\textit{ Representations of the group $\GL{n}{F}$ where $F$ is a non-archimedean local field},
Russian Math. Surveys 31:3 (1976).

\bibitem{Borchards1} Borcherds,
\textit{Modular Moonshine III},
Duke Mathematical Journal 93 (1998), no. 1, 129-154.

\bibitem{bump} Bump,
\textit{Automorphic forms and representations},
Cambridge Studies in Advanced Mathematics, volume 55, 1997.

\bibitem{BH} Bushnell, Henniart,
\textit{Modular Local Langlands Correspondence for $\Gl{n}$},
Int Math Res Notices (2014) 2014 (15): 4124-4145.

\bibitem{BH1} Bushnell, Henniart,
\textit{The essentially tame Local Langlands Correspondence I},
Journal of the AMS, volume 18, number 3, 2005.

\bibitem{BH2} Bushnell, Henniart,
\textit{The essentially tame Local Langlands Correspondence III, the general case},
Proceedings of the London Math. Soc. (3), 101, 2010.

\bibitem{C} Carter,
\textit{Finite groups of Lie type - conjugacy classes and complex characters},
John Wiley and Sons, 1985.

\bibitem{CR} Curtis, Reiner,
\textit{Methods of representation theory, volume I},
John Wiley and Sons, 1981.

\bibitem{FV} Fesenko, Vostokov,
\textit{Local Fields and their extensions - 2nd edition},
Translations of Mathematical Monographs, vol. 121, AMS, 2002.

\bibitem{gelbart} Gelbart,
\textit{An elementary introduction to the Langlands program}
Bulletin of the AMS, vol. 10, no.2, 177-219, 1984.

\bibitem{Gelfand} Gelfand,
\textit{Representations of the Full Linear Group over a Finite Field},
Math USSR Sbornik, vol. 12 (1970), no. 1.

\bibitem{G} Green,
\textit{The characters of the finite general linear groups},
Transaction of the AMS, vol. 80, no. 2, 1955.

\bibitem{M} Macdonald,
\textit{Symmetric functions and Hall polynomials},
Oxford University Press, 1978.

\bibitem{Prasad} G. Prasad,
\textit{Galois-fixed points in the Bruhat-Tits building of a reductive group},
Bull. Soc. math. France, 129 (2), 2001, p. 169-174.

\bibitem{renard} Renard,
\textit{Representations des groupes reductifs $p$-adiques},
Cours Specialises SMF, 2010.

\bibitem{serre} Serre,
\textit{Linear Representation of Finite Groups},
Graduate texts in Mathematics, volume 42, 1977.

\bibitem{serre2} Serre,
\textit{Local Fields},
Graduate texts in Mathematics, volume 67, 1980.

\bibitem{Sh} Shintani,
\textit{Two remarks on irreducible characters of finite general linear groups},
Journal of the Mathematical Society of Japan, vol. 28, no. 2, 1976.

\bibitem{S1} Springer,
\textit{Cusp forms for finite groups},
Seminar on algebraic groups and related finite groups, Lecture Notes in Mathematics vol.131, Springer, 1970.

\bibitem{S2} Springer,
\textit{Characters of special groups},
Seminar on algebraic groups and related finite groups, Lecture Notes in Mathematics vol.131, Springer, 1970.

\bibitem{SS} Springer, Steinberg,
\textit{Conjugacy classes},
Seminar on algebraic groups and related finite groups, Lecture Notes in Mathematics vol.131, Springer, 1970.

\bibitem{tate} Tate,
\textit{Number Theoretic Background},
Proceedings of Symposia in Pure Mathematics, volume 33, part 2, 1979.

\bibitem{TV} Treumann, Venkatesh,
\textit{Functoriality, Smith Theory and the Brauer Homomorphism},
http://arxiv.org/pdf/1407.2346.pdf.

\bibitem{Vig} Vign\'eras,
\textit{Correspondance de Langlands semi-simple pour $\mathrm{GL}(n,F)$ modulo $l \neq p$},
Inventiones mathematicae vol. 144, 177-223, 2001.

\bibitem{V} Vign\'eras,
\textit{Repr\'esentations $l$-modulaires d'un group r\'eductif $p$-adiques avec $l \neq p$},
Birkhauser, 1996.

\bibitem{vogan} Vogan,
\textit{The local Langlands conjecture},
Representation theory of groups and algebras, Contemp. Math. 145, AMS, pp. 305-379.

\end{thebibliography}
\end{document}